\theoremstyle{plain}
\newtheorem{thm}{Theorem}[section]
\newtheorem{cor}[thm]{Corollary}
\newtheorem{prp}[thm]{Proposition}
\newtheorem{lem}[thm]{Lemma}
 \newenvironment{repthm}[1]
 {\customthm}
 {\endcustomthm}
 \newenvironment{repcor}[1]
 {\customcor}
 {\endcustomcor}
\theoremstyle{definition}
\newtheorem{dfn}[thm]{Definition}
\newtheorem{rmk}[thm]{Remark}
\newtheorem{ex}[thm]{Example}
\numberwithin{equation}{section}
\newcommand{\mi}{\mu}
\newcommand{\N}{\mathbb{N}}
\newcommand{\Z}{\mathbb{Z}}
\newcommand{\Q}{\mathbb{Q}}
\newcommand{\R}{\mathbb{R}}
\newcommand{\F}{\mathbb{F}}
\newcommand{\D}{\mathbb{D}}
\newcommand{\id}{\mathrm{id}}
\newcommand{\pr}{\mathrm{pr}}
\newcommand{\Sp}{\mathrm{Sp}}
\newcommand{\Fil}{\mathrm{Fil}}
\newcommand{\Isoc}{\mathrm{Isoc}}
\newcommand{\Vect}{\mathrm{Vect}}
\newcommand{\pdiv}{p\mathrm{-div}}
\newcommand{\pgr}{p\mathrm{-gr}}
\newcommand{\Rep}{\mathrm{Rep}}
\newcommand{\nr}{\mathrm{nr}}
\newcommand{\wa}{\mathrm{w\text{-}a}}
\newcommand{\et}{\text{ét}}
\newcommand{\cris}{\text{cris}}
\newcommand{\NF}{\mathcal{N}}
\renewcommand{\phi}{\varphi}
\renewcommand{\epsilon}{\varepsilon}
\renewcommand{\O}{\mathcal{O}}
\renewcommand{\k}{\mathit{k}}
\renewcommand{\r}{\mathrm{r}}
\renewcommand{\c}{\mathcal{C}}
\renewcommand{\d}{\mathcal{D}}
\DeclareMathOperator{\Ker}{Ker}
\DeclareMathOperator{\Ima}{Im}
\DeclareMathOperator{\Spec}{Spec}
\DeclareMathOperator{\Spf}{Spf}
\DeclareMathOperator{\End}{End}
\DeclareMathOperator{\Hom}{Hom}
\DeclareMathOperator{\Lie}{Lie}
\DeclareMathOperator{\Newt}{Newt}
\DeclareMathOperator{\Hdg}{Hdg}
\DeclareMathOperator{\HN}{HN}
\DeclareMathOperator{\height}{ht}
\DeclareMathOperator{\length}{lg}
\DeclareMathOperator{\gr}{gr}
\DeclareMathOperator{\rk}{rk}
\DeclareMathOperator{\Res}{Res}
\DeclareMathOperator{\GL}{GL}
\DeclareMathOperator{\GSp}{GSp}
\title{\vspace{-0.33in} Hodge-Newton filtration for $p$-divisible groups \\
 with ramified endomorphism structure}
\author{Andrea Marrama}
\date{}
\begin{document}

\maketitle

\paragraph{Abstract.}
Let $\O_K$ be a complete discrete valuation ring of
mixed characteristic $(0,p)$ with perfect residue field.
We prove the existence of the Hodge-Newton filtration for $p$-divisible groups over
$\O_K$ with additional endomorphism structure for the ring of integers of a finite,
possibly ramified field extension of $\Q_p$.
The argument is based on the Harder-Narasimhan theory for finite flat group schemes over $\O_K$.
In particular, we describe a sufficient condition for the existence of a filtration of
$p$-divisible groups over $\O_K$ associated to a break point of the Harder-Narasimhan polygon.

\tableofcontents

\clearpage

\pagestyle{headings}
\renewcommand{\sectionmark}[1]{\markboth{\MakeUppercase{#1}}{\MakeUppercase{\thesection.\ #1}}}

\section*{Introduction}
\addcontentsline{toc}{section}{Introduction}
\markboth{\MakeUppercase{Introduction}}{\MakeUppercase{Introduction}}
\thispagestyle{plain}

Let $p$ be a prime number.
This work concerns $p$-divisible groups, alias Barsotti-Tate groups,
over a complete discrete valuation ring $\O_K$ in mixed characteristic,
with perfect residue field $\k$ of characteristic $p$.
More generally, we consider $p$-divisible groups $H$ endowed with
additional endomorphism structure $\iota\colon\O_F\rightarrow\End(H)$,
where $\O_F$ is the ring of integers of a finite field extension $F$ of $\Q_p$.
To any such pair $(H,\iota)$ we associate two invariants up to ($\O_F$-equivariant) isogeny,
the \emph{Hodge polygon} $\Hdg(H,\iota)$ and the \emph{Newton polygon} $\Newt(H,\iota)$;
each of them consists in a collection of ``slopes'' (with multiplicities) and
can be visualised as a concave polygonal curve, starting from the origin, in the Euclidean plane.
The main property relating these two polygons is that
they lie one above the other and share the end point.
When they share a further point $z$ and this is a break point of the Newton polygon,
we say that $(H,\iota)$ is \emph{Hodge-Newton reducible} (\emph{at} $z$).
In this case, we prove that there exists an $\iota$-stable sub-$p$-divisible group $H_1$ of $H$
corresponding to the division of the polygons in their parts before and after $z$,
that is, a \emph{Hodge-Newton filtration} of $(H,\iota)$.
More precisely:

\begin{repthm}{\ref{thm}}
 Let $(H,\iota)$ be a $p$-divisible group over $\O_K$ with endomorphism structure for $\O_F$
 and suppose that $(H,\iota)$ is Hodge-Newton reducible at $z$.
 Then, there exists a unique $\iota$-stable sub-$p$-divisible group $H_1$ of $H$ such that,
 if $\iota_1$ denotes the restriction of $\iota$ to $H_1$,
 then $\Newt(H_1,\iota_1)$, $\Hdg(H_1,\iota_1)$ and $\HN(H_1,\iota_1)$ equal respectively
 the part of $\Newt(H,\iota)$, $\Hdg(H,\iota)$ and $\HN(H,\iota)$ between the origin and $z$.
 Furthermore, if $H_2$ denotes the quotient of $H$ by $H_1$, with induced $\O_F$-action $\iota_2$,
 then $\Newt(H_2,\iota_2)$, $\Hdg(H_2,\iota_2)$ and $\HN(H_2,\iota_2)$ equal respectively
 the rest of $\Newt(H,\iota)$, $\Hdg(H,\iota)$ and $\HN(H,\iota)$ after $z$
 (up to a shift of coordinates setting the origin in $z$).
\end{repthm}

The third polygon featuring in the statement, the \emph{Harder-Narasimhan polygon} $\HN(H,\iota)$,
is related to the strategy behind the proof.
This, indeed, is based on the \emph{Harder-Narasimhan theory}
for finite flat group schemes developed by Fargues,
but let us postpone this discussion to a second moment.

In presence of an $\O_F$-equivariant polarisation $\lambda\colon H\xrightarrow{\sim}H^\vee$
(where the dual $p$-divisible group $H^\vee$ is endowed with the dual $\O_F$-action,
possibly twisted by a field involution of~$F$), the three polygons gain some symmetry.
This leads to an enhanced version of the theorem,
where a division of the polygons into \emph{three} parts corresponds to
the existence of an $\iota$-stable filtration $H_1\subseteq H_1'\subseteq H$,
with $\lambda$ inducing crossed isomorphisms between the graded pieces and their duals
(see Corollary~\ref{cor}).

Looking at the special fibre,
the theorem reconnects with the \emph{Hodge-Newton decomposition} from \cite[\S 1]{BH};
in particular, the reduction to $\k$ of the filtration obtained here is $\O_F$-equivariantly split
(see the discussion in \S\ref{spf}).

\bigskip\noindent
In order to put the above results into context, let us give some historical coordinates.
The notion of Hodge-Newton reducibility first appeared in Katz's work~\cite[\S 1.6]{K} (1979),
in the framework of $F$-crystals over a perfect field $\k$ of characteristic $p$.
To these objects are associated a Hodge polygon and a Newton polygon,
satisfying the same main property as described before
(in this context, this is known as \emph{Mazur's inequality}).
Under an identical hypothesis on the polygons as above,
Katz proves the existence of a ``Hodge-Newton'' decomposition of
the relative $F$-crystal into two components, whose polygons correspond respectively to
the part before and after the internal contact point given by the assumption.
In case such an $F$-crystal is the Dieudonné module of a $p$-divisible group over $\k$,
this result recovers the multiplicative-bilocal-étale decomposition.

This finding was later generalised by Kottwitz, in \cite{Ko} (2003),
to $F$-crystals with additional endomorphism structure for the ring of integers of
a finite unramified extension~$F$ of $\Q_p$ and possibly endowed with a polarisation.
By this time, however, new points of view on the subject had developed.
Kottwitz's result is formulated in terms of \emph{affine Deligne-Lusztig sets},
objects constructed by means of a reductive group over a local field and
determined by a defining datum.
The additional structure is encoded in the reductive group,
in this case the restriction of scalars from $F$ to $\Q_p$ of a general linear group $\GL_n$,
or a general symplectic group $\GSp_{2n}$ in presence of a polarisation.
The definitions of the Hodge and the Newton polygons,
as well as the notion of Hodge-Newton reducibility,
are also translated in the group-theoretic language,
thus taking into account the additional structure;
for the groups in question, all this can still be visualised in terms of polygons.
An affine Deligne-Lusztig set can then be seen as a set of $F$-crystals with additional structure,
whose Hodge and Newton polygons are fixed by the defining datum.
From this point of view, the Hodge-Newton decomposition is expressed as a bijection between
the affine Deligne-Lusztig set associated to a Hodge-Newton reducible datum and
one relative to a Levi subgroup, which collects $F$-crystals admitting a decomposition.

Mantovan and Viehmann, in \cite{MV} (2010), further generalised Kottwitz's result to
endomorphism structures for more general unramified $\Z_p$-algebras and to families of
$F$-crystals in characteristic $p$
(a statement in families could also be found in Katz's original article, without additional structure,
and in Csima's work~\cite{Cs}, for polarised objects).
Most importantly, however, the two authors proved that their Hodge-Newton decomposition can be
lifted to a filtration of $p$-divisible groups over a complete Noetherian local $W(\k)$-algebra
(here, $W(\k)$ denotes the ring of Witt vectors with coefficients in $\k$).
Their argument is based on an explicit description of the universal deformation
of a $p$-divisible group over $\k$ with unramified endomorphism structure.

This result is used in Mantovan's work~\cite{M} (2008),
to study the generic fibre of certain \emph{Rapoport-Zink spaces}.
First conceived in the book~\cite{RZ} (1996), to whose authors they owe their name,
these spaces are formal schemes over (a finite extension of) $\breve{\Z}_p$,
parametrising $p$-divisible groups with additional structure and rigidified by a quasi-isogeny to
a fixed ``frame'' object modulo $p$ (here, $\breve{\Z}_p$ denotes the ring of integers of
the completion of the maximal unramified extension of $\Q_p$).
A Rapoport-Zink space is determined by a \emph{local PEL datum},
which prescribes the kind of additional structure in form of polarisation,
endomorphism structure and level structure (whence the acronym ``PEL'') and
fixes a number of combinatorial invariants, including the Hodge and the Newton polygons.

In fact, the connection with Rapoport-Zink spaces extends to a broader level.
If, on the one hand, the Hodge-Newton filtration over a good class of $W(\k)$-algebras
leads to properties of their generic fibre (for Hodge-Newton reducible PEL data), on the other hand, 
the Hodge-Newton decomposition over $\k$ corresponds to properties of their special fibre
(here, $\k$ is the algebraic closure of the finite field $\F_p$, so $W(\k)=\breve{\Z}_p$).
Indeed, the $\k$-valued points of a Rapoport-Zink space are in bijection with
a corresponding affine Deligne-Lusztig set, at least in the cases concerned up to this point;
we will come back to this matter in a more general framework.

Further progress in the same direction as Mantovan and Viehmann was made by Shen in \cite{XS} (2013).
Here, besides an application to the generic fibre of Rapoport-Zink spaces,
we find a new proof of the existence of the Hodge-Newton filtration,
for $p$-divisible groups over a complete valuation ring (of rank $1$) $\O_K$ in mixed characteristic,
with perfect residue field $\k$ of characteristic $p$.
The setup is still that of unramified endomorphism structure and the Hodge and
the Newton polygons are defined through the reduction of the $p$-divisible group to $\k$.
However, instead of obtaining the Hodge-Newton filtration over $\O_K$ by
lifting the Hodge-Newton decomposition over $\k$, Shen proves its existence directly,
by means of the Harder-Narasimhan theory for finite flat group schemes,
developed by Fargues in \cite{F1} (2010).

In the more recent articles \cite{H2} (2018) and~\cite{H1} (2019),
Hong further extended the conclusions of Mantovan, Viehmann and Shen to a wider class of
additional structures and the relative generalised Rapoport-Zink spaces
(namely of unramified Hodge type).
For this purpose, the same author developed a tool, called ``EL realisation'', to
reduce the problem to the previously known cases.

All the results mentioned so far deal with additional endomorphism structures of unramified type.
The notion of Hodge-Newton reducibility, however,
was meanwhile formalised in a more general context by Rapoport and Viehmann,
in \cite[Definition~4.28]{RV} (2014), including possible ramification.
The definition is formulated in a group-theoretic way, building upon Kottwitz's language,
as a property of \emph{local Shimura data}, a more general version of local PEL data.
A local Shimura datum determines a \emph{local Shimura variety},
a concept developed in the same paper, which, in the PEL case,
should be realised as the generic fibre of a Rapoport-Zink space.
The work of Rapoport and Viehmann also explains,
in the more general context of the Harris-Viehmann conjecture,
how the Hodge-Newton reducibility should affect the $l$-adic cohomology of
local Shimura varieties with infinite level structure.
In this sense, the results of Mantovan, Shen and Hong include advances in this direction,
in the unramified case.

A notion of Hodge-Newton reducibility in ramified settings can also be found,
relatively to $F$-crystals over $\k$, in the article~\cite{BH} (2017) by Bijakowski and Hernandez.
This does not exactly match the definition in \cite{RV},
in that the Hodge polygon considered by Bijakowski and Hernandez is a different invariant
(we will add more details on this below).
Using their notion, anyway, the authors prove the existence of a decomposition of
$F$-crystals with possibly ramified endomorphism structure, generalising Katz's original result.

The Hodge-Newton reducibility condition from \cite{RV}, instead,
features in the work of Görtz, He and Nie~\cite{GHN} (2019).
Here, allowing ramified setups, the Hodge-Newton decomposition for a very large class of
affine Deligne-Lusztig \emph{varieties} is proved;
in fact, these objects acquired in the meantime the geometric structure of perfect schemes over $\k$,
inside the Witt vector affine flag variety (cf.\ \cite{Z} and \cite{BS}, note that
we are considering here the mixed characteristic version of affine Deligne-Lusztig varieties).
In the article it is conjectured, in full generality, that the bijection between
the $\k$-valued points of Rapoport-Zink spaces and
the corresponding affine Deligne-Lusztig sets upgrades to an isomorphism of perfect schemes,
between the perfection of the special fibre on one side and
an affine Deligne-Lusztig variety on the other.
This brings us to the motivation of our own work.

\bigskip\noindent
The Hodge-Newton decomposition for affine Deligne-Lusztig varieties attached to a ramified datum,
along with the conjectural consequences on the special fibre of
Hodge-Newton reducible Rapoport-Zink spaces, suggests that,
as in the unramified case, a corresponding statement should hold at the level of the generic fibre.
As we saw in the previous overview, this investigation starts from the existence of
the Hodge-Newton filtration for $p$-divisible groups over mixed characteristic base rings.
In this sense, the present work could have a natural application to the study of
the generic fibre of Rapoport-Zink spaces with finite level structure
(application which is in fact in the author's plans).
Passing to infinite level structure,
this could lead to further progress towards the Harris-Viehmann conjecture,
comparing with the work of Gaisin and Imai~\cite{GaI} in this direction
(especially in light of the methods elaborated by Chen, Fargues and Shen in \cite{CFS},
but see also \cite{S} and \cite{C},
based on the theory of vector bundles over the Fargues-Fontaine curve).

Let us mention, in addition, that the conclusions of
this work can be interpreted in terms of $p$-adic Galois representations.
Indeed, the category of $p$-divisible groups over $\O_K$ is equivalent to the category of
Galois stable $\Z_p$-lattices in certain crystalline representations of
the absolute Galois group of $K$ (cf.\ \cite[Corollary~6.2.3]{SW}).

\bigskip\noindent
The first issue that we address in this document are the definitions of the Newton polygon and
the Hodge polygon for $p$-divisible groups over $\O_K$ with additional endomorphism structure.
In order to do this, we make use of the equivalence of categories between $p$-divisible groups over
$\O_K$ up to isogeny and a certain abelian subcategory of weakly admissible filtered isocrystals over
$K$, the field of fractions of $\O_K$.
The definitions, as well as the main property relating the two polygons,
can be dealt with at the level of (weakly admissible) filtered isocrystals,
for which reason the first section is devoted to these objects.

As for the Newton polygon, it arises from a functorial formalism
(namely the \emph{slope decomposition} for isocrystals) and is therefore not affected by
the additional endomorphism structure, except for a simple rescaling process.
The situation is different for the Hodge polygon,
which is based on the group-theoretic definition of $\bar{\mi}$ from \cite[2.4]{RV};
here, $\mi$ is a dominant geometric cocharacter of
the reductive group over $\Q_p$ encoding the additional structure
(see Remark~\ref{rmk-Nwt<Hdg-fic-end} for the translation to the group-theoretic setting).
Passing from $\mi$ to $\bar{\mi}$ amounts to an averaging process over the Galois conjugates of $\mi$;
in this case, the additional endomorphism structure plays a more decisive role.
This discrepancy between the definitions of the two polygons is also behind the fact that
a $p$-divisible group might be Hodge-Newton reducible only if considered with some
additional endomorphism structure, losing this property when neglecting the same additional structure.
In the group-theoretic language of \cite{RV},
the Newton polygon corresponds to the \emph{Newton point} from loc.\ cit.\ 2.1.
In this sense, our definitions of the Newton polygon and the Hodge polygon agree with
those of the corresponding invariants of local Shimura data.

Let us remark that the Newton polygon is actually an invariant of
the reduction of the $p$-divisible group to $\k$
(or, in terms of filtered isocrystals, an invariant of the underlying isocrystal).
For unramified endomorphism structures, even the Hodge polygon can be defined at the level of
$p$-divisible groups over $\k$ (or, more generally, of $F$-crystals over~$\k$),
although it is in general not an invariant up to isogeny at this level
(cf.\ Remark~\ref{rmk-Hdg-unr} and Example~\ref{ex-key-unr}).
This is in fact the approach that we find in the previous literature
(in particular \cite{MV} and \cite{XS}),
based on the group-theoretic concept of \emph{Hodge point} from \cite{RR}.
A generalisation of this definition to possibly ramified setups can be found in \cite{BH};
this notion, however, does not match the Hodge polygon considered here.
Indeed, our definition recovers the one for $p$-divisible groups over $\k$ only in
the unramified case, but in general it does not give an invariant of the reduction,
as illustrated in Example~\ref{ex-Hdg-ram}.

\bigskip\noindent
Concerning the proof of Theorem~\ref{thm},
we followed essentially the same approach as in Shen's work (cf.\ \cite{XS}),
fixing some details about the main input from Harder-Narasimhan theory,
even for the unramified case (see Corollary~\ref{cor-sub-pdv} below).
Given its rather formal nature, in fact, the argument in loc.\ cit.\ adapts well to
the more general situation considered here.
Let us summarise the underlying strategy.

The Hodge-Newton reducibility assumption for $p$-divisible groups over $\O_K$ with
endomorphism structure concerns whole equivariant isogeny classes of objects.
One can then use the fact that the category of $p$-divisible groups over $\O_K$ up to isogeny
admits a \emph{Harder-Narasimhan formalism} (cf.\ \cite{F2});
in other words, to each object is associated a third polygon,
the Harder-Narasimhan polygon, whose break points correspond to
unique sub-$p$-divisible groups up to isogeny.

The first step consists in proving that the point where the Hodge-Newton reducibility
assumption is realised is also a break point of the Harder-Narasimhan polygon
(or rather of a rescaled version of it, as determined by the endomorphism structure).
This yields a sub-$p$-divisible group up to isogeny, which,
due to the functorial nature of the formalism, is stable under any additional endomorphism structure.
This step is taken care of in \S\ref{S-Hdg-Nwt-fic},
at the level of weakly admissible filtered isocrystals;
in fact, both the Harder-Narasimhan formalism and
the notion of Hodge-Newton reducibility can be set up in this context.

In order to upgrade this subobject up to isogeny to an actual sub-$p$-divisible group,
we make use of the fact that finite flat group schemes of $p$-power order over $\O_K$
also admit a Harder-Narasimhan formalism (cf.\ \cite{F1}).
Moreover, the polygons associated to the $p$-power torsion parts of a $p$-divisible group
converge from above to the Harder-Narasimhan polygon of the $p$-divisible group itself.
This family of polygons provides now a finer invariant, bounded from below by
the Harder-Narasimhan polygon of the $p$-divisible group (hence uniformly over an isogeny class).

The next step consists in finding a (similarly uniform)
upper bound for the family of polygons under consideration.
This upper bound is indeed represented by the Hodge polygon, as proved in \S\ref{S-key}.
This section is the technical heart of the whole argument;
in fact, this is essentially the only point where the endomorphism structure really plays a role,
as functoriality takes care of it in the other steps
(see Example~\ref{ex-key} for an analysis of the situation in an emblematic case,
namely that of $p$-divisible $\O_F$-modules).

The double bound obtained on the Harder-Narasimhan polygons of the $p$-power torsion parts
forces these polygons to pass through the critical point as well.
In order to conclude the existence of the required sub-$p$-divisible group,
we finally prove the following statement, which can be seen as a refinement of
the multiplicative-bilocal-étale filtration for $p$-divisible groups over $\O_K$
(cf.\ Remark~\ref{rmk-sub-pdv-mbe}).

\begin{repcor}{\ref{cor-sub-pdv}}
 Let $(H,\iota)$ be a $p$-divisible group over $\O_K$ with endomorphism structure for $\O_F$.
 Suppose that $z$ is a break point of $\HN(H,\iota)$ which also lies on $\HN(H[p],\iota)$.
 Then, there exists a unique $\iota$-stable sub-$p$-divisible group $H_1$ of $H$ such that,
 if $\iota_1$ denotes the restriction of $\iota$ to $H_1$,
 then $\HN(H_1,\iota_1)$ equals the part of $\HN(H,\iota)$ between the origin and $z$.
 Furthermore, if $H_2$ denotes the quotient of $H$ by $H_1$, with induced $\O_F$-action $\iota_2$,
 then $\HN(H_2,\iota_2)$ equals the rest of $\HN(H,\iota)$ after $z$
 (up to a shift of coordinates setting the origin in $z$).
\end{repcor}

Here, $\HN(H[p],\iota)$ denotes the Harder-Narasimhan polygon of the $p$-torsion part of~$(H,\iota)$.
This result is already contained implicitly in the proof of \cite[Theorem~5.4]{XS},
although, as acknowledged to us by the author of loc.\ cit.,
the argument therein only works in the case that $z$ is a break point of
the (renormalised) Harder-Narasimhan polygon of $(H[p^i],\iota)$, for some $i\ge1$.
In order to fill this gap, we combined the argument in loc.\ cit.\ with
some methods from the algorithm in \cite[\S 3]{F2} (cf.\ Remark~\ref{rmk-sub-pdv-prf}).

\paragraph{Acknowledgements.}
For the realisation of this article I am deeply grateful to Ulrich Görtz,
who introduced me to the topic in object and played an inspiring role in addressing it.
I would also like to thank Eva Viehmann, Laurent Fargues, Xu Shen,
Jan Kohlhaase and Jochen Heinloth for helpful discussions and interesting comments.
Finally, I wish to thank the anonymous referee for several suggestions that improved the exposition.

This work coincides in large part with the author's PhD thesis,
completed at the University of Duisburg-Essen (cf.\ \cite{AM});
at the same time, it constitutes an important upgrade of the latter,
bringing the original results to a bigger generality.
The author was partially funded by the SFB/TR 45
(Sonderforschungsbereich/Transregio 45,
\emph{Periods, moduli spaces and arithmetic of algebraic varieties})
and the above-mentioned institution.
During the revision of this article,
the author was funded by the FMJH (Fondation Mathématique Jacques Hadamard),
while staying at the Centre de Mathématiques Laurent Schwartz, École Polytechnique.

\clearpage

\subsection*{Setup and notation}
\addcontentsline{toc}{subsection}{Setup and notation}
\markright{Setup and notation}

The following setup will be in force throughout the document:
\begin{itemize}
 \item $p$ is a fixed prime number;
 \item $\k$ is a perfect field of characteristic $p$;
 \item $W(\k)$ is the ring of Witt vectors with coefficients in $\k$, with fraction field~$K_0$;
 \item $K$ is a totally ramified extension of $K_0$ of degree $e$, with ring of integers~$\O_K$;
 \item $\sigma\colon W(\k)\rightarrow W(\k)$ denotes the lift of the Frobenius map
 $(x\mapsto x^p)\colon\k\rightarrow\k$,
 the same notation is used for the extension of $\sigma$ to $K_0$;
 \item $F$ is a finite extension of $\Q_p$ of degree $d$, with ring of integers $\O_F$.
\end{itemize}

\paragraph{The Newton set.}
For $n\in\N$, we denote by $\Q^n_+:=\Set{(a_i)_{i=1}^n\in\Q^n|a_1\ge\dots\ge a_n}$
the set of decreasing $n$-tuples of rational numbers.
An element $(a_i)_{i=1}^n\in\Q^n_+$ can be interpreted as the concave polygon
$[0,n]\rightarrow\R$ starting at $(0,0)$ and proceeding with slope $a_i$ on $[i-1,i]$.
Here, by a \emph{concave polygon} we mean a piecewise affine linear, continuous, concave function
$[0,N]\rightarrow\R$ (for some $N\in\N$), such that $0\mapsto 0$;
we will often make no distinction between the function and its graph.
There is an obvious notion of \emph{break point},
from which we exclude the extremal points.
The concave polygons corresponding to elements of $\Q^n_+$ are those defined on $[0,n]$
and whose break points lie in $\Z\times\Q$.

The set $\Q^n_+$ is partially ordered by the following rule:
\[
 (a_i)_{i=1}^n\le (b_i)_{i=1}^n \qquad\text{if}\qquad
 \sum_{i=1}^j a_i\le\sum_{i=1}^j b_i \quad\text{for all $1\le j\le n$ and}\quad
 \sum_{i=1}^n a_i=\sum_{i=1}^n b_i.
\]
In terms of the corresponding polygons,
the relation means ``lying below'' and sharing the end point.
With this meaning,
we extend the partial order to the set of concave polygons defined on $[0,n]$.

\paragraph{Note on the normalisation.}
This paper follows the conventions adopted for the Harder-Narasimhan formalism for
weakly admissible filtered isocrystals, as it is set up in \cite[\S 5.2.3]{F2}.
This has the following consequences:
\begin{enumerate}
 \item
 The Newton polygon of an isocrystal is defined inverting the sign of
 the slopes of its isotypical components (as opposed to the usual definition).
 Then, in the context of weakly admissible filtered isocrystals,
 the Newton polygon can be compared directly to the Harder-Narasimhan polygon
 (see Proposition~\ref{HN<Nwt-fic}).
 \item
 For consistency when dealing with filtered isocrystals,
 the type of a filtration is also defined inverting the sign of its jumps
 (as opposed to the definition in \cite[\S I.1]{DOR}).
 \item
 The functor from $p$-divisible groups over $\k$ to isocrystals is set up using
 covariant Dieudonné theory and shifting the slopes by $-1$ (see \eqref{pdv-ic});
 the functor from $p$-divisible groups over $\O_K$ to filtered isocrystals is set up adjusting
 the jumps of the filtration consistently, in order to obtain a weakly admissible object.
 Via these functors, on the one hand the Newton polygon for isocrystals considered here recovers
 the classical Newton polygon for $p$-divisible groups (see Remark~\ref{rmk-Nwt}).
 On the other hand, the Harder-Narasimhan polygons of the $p$-power torsion parts of
 a $p$-divisible group $H$ over $\O_K$ (as defined in \cite[\S 4]{F1}) converge to
 the Harder-Narasimhan polygon of the corresponding weakly admissible filtered isocrystal
 (see the proof of Proposition~\ref{HN-lim}).
 Finally, our definition of the Hodge polygon, which goes through the notion of type mentioned above,
 also recovers the classical invariant in the unramified case (see Remark~\ref{rmk-Hdg-unr}).
\end{enumerate}
We remark that this normalisation agrees with the one adopted in \cite{XS}.

\clearpage

\section{Filtered isocrystals}
\thispagestyle{plain}

\subsection{Isocrystals}

Recall that an \emph{isocrystal} over $\k$ is a finite dimensional $K_0$-vector-space $N$,
together with a $\sigma$-linear bijective endomorphism $\phi\colon N\rightarrow N$.
The \emph{height} of $(N,\phi)$ and its \emph{dimension} are respectively:
\[
 \height(N,\phi):=\dim_{K_0}N\in\N \qquad\text{and}\qquad \dim(N,\phi):=v_p(\det\phi)\in\Z;
\]
here, choosing a $K_0$-basis of $N$, one can write the matrix of $\phi$ in the usual way
and the $p$-adic valuation of its determinant will be independent of the chosen basis.
The isocrystals over $\k$ form a $\Q_p$-linear abelian category $\Isoc(\k)$,
with morphisms given by $K_0$-linear maps compatible with $\phi$.
The dimension function is additive on short exact sequences.\looseness=-1

An isocrystal $(N,\phi)$ is \emph{isotypical} (or \emph{isoclinic}) if there exist
integers $r,s$ with $s>0$ and a $W(\k)$-lattice $M\subseteq N$ such that $\phi^sM=p^rM$.
In this case, we have $r/s=\dim(N,\phi)/\height(N,\phi)\in\Q$
and call this number the \emph{slope} of $(N,\phi)$.

\paragraph{The Newton polygon.}
Because $\k$ is perfect, every isocrystal $(N,\phi)$ over $\k$ has a unique
\emph{slope decomposition}:
\begin{equation}\label{slp-dcp}
 (N,\phi)=\bigoplus_{\lambda\in\Q}(N_\lambda,\phi_\lambda)
\end{equation}
into isotypical sub-isocrystals $(N_\lambda,\phi_\lambda)$ of slope $\lambda$
(cf.\ \cite[6.22]{Z1}); this is functorial in the sense that there are no nonzero morphisms
between isotypical isocrystals of different slope (cf.\ loc.\ cit.\ 6.20).
Say that $\lambda_1<\dots<\lambda_m$ are the slopes appearing nontrivially
in the decomposition (called the \emph{Newton slopes} of $(N,\phi)$)
and let $h_i:=\height(N_{\lambda_i},\phi_{\lambda_i})$, $i=1,\dots,m$,
and $h:=\height(N,\phi)=h_1+\dots+h_m$.
Then, we define the \emph{Newton polygon} of $(N,\phi)$ to be:
\[
 \Newt(N,\phi):=(-\lambda_1^{(h_1)},\dots,-\lambda_m^{(h_m)})\in\Q^h_+,
\]
the superscript denoting the number of repetitions.
This polygon is the concave envelope of the points $(\height(N',\phi'),-\dim(N',\phi'))$
over all sub-isocrystals $(N',\phi')$ of $(N,\phi)$.
Note that the break points of $\Newt(N,\phi)$ lie in $\Z\times\Z$
and we have $\Newt(N,\phi)(h)=-\dim(N,\phi)$.
We remark that compared to the usual definition of Newton polygon for isocrystals,
here we invert the sign of the slopes.
As pointed out in the note at the end of the introduction,
this is in accordance with other conventions adopted throughout in the paper.

\subsection{Isocrystals with coefficients}\label{S-ic-end}

\begin{dfn}
 An \emph{isocrystal} over $\k$ \emph{with coefficients} in $F$ is
 a triple $(N,\phi,\iota)$ consisting of an isocrystal $(N,\phi)$ over $\k$ and
 a map of $\Q_p$-algebras $\iota\colon F\rightarrow\End(N,\phi)$.
\end{dfn}

The isocrystals over $\k$ with coefficients in $F$ form an $F$-linear abelian category
$\Isoc(\k)_F$, with morphisms given by maps of isocrystals compatible with $\iota$.
For objects of this category we still have the notions of height and dimension,
which simply refer to those of the underlying isocrystal.

It is sometimes useful (particularly in view of Remark \ref{rmk-ic-end} below)
to consider isocrystals with coefficients from another point of view,
which we borrow from \cite[\S VIII.5]{DOR}.
First of all,
note that the underlying vector space of an isocrystal over $\k$ with coefficients in $F$
has a module structure over $K_0\otimes_{\Q_p}F$, thanks to the $\Q_p$-linear $F$-action.
Let us study this ring in more detail.

Let $f_F:=f(F|\Q_p)$ be the inertia degree of $F$ over $\Q_p$
and write $K_0^{\sigma^{f_F}}$ for the $\sigma^{f_F}$-fixed subfield of $K_0$;
this is a finite unramified extension of $\Q_p$, say of degree $f$, with $f$ dividing $f_F$.
In fact, we have $K_0^{\sigma^{f_F}}=K_0^{\sigma^f}$, the $\sigma^f$-fixed subfield of $K_0$.
Note that $f$ can be strictly smaller than $f_F$,
without necessarily $K_0=K_0^{\sigma^f}$ being the case
(e.g.\ if $\k=\F_p(T^{1/p^{\infty}})$, the field obtained from the field of
rational functions $\F_p(T)$ over $\F_p$ adjoining the $p^j$-th root of $T$ for all $j\ge1$).

Fix now an embedding $\tau_0\colon K_0^{\sigma^f}\rightarrow F$.
We obtain all the embeddings of $ K_0^{\sigma^f}$ in $F$ as
$\tau_i:=\tau_0\circ\sigma^{-i}\colon K_0^{\sigma^f}\rightarrow F$, for $i\in\Z/f\Z$.
Set then $K_F^{(i)}:=K_0\otimes_{K_0^{\sigma^f}\!,\tau_i}F$, for $i\in\Z/f\Z$,
and $K_F:=K_F^{(0)}$; all these are unramified field extensions of $F$.
Indeed, $\tau_i(K_0^{\sigma^f})$ is contained in the maximal unramified subextension $F^\nr$ of
$F|\Q_p$ and the minimal polynomial of $F^\nr|K_0^{\sigma^f}$ is irreducible over $K_0$,
because the coefficients of any nontrivial monic factor would be fixed by $\sigma^{f_F}$ and
hence would lie in $K_0^{\sigma^{f_F}}=K_0^{\sigma^f}$.
Thus, $K_{F^\nr}^{(i)}:=K_0\otimes_{K_0^{\sigma^f}\!,\tau_i}F^\nr$ is a field and,
in particular, an unramified extension of $F^\nr$.
Then, $K_F^{(i)}=K_{F^\nr}^{(i)}\otimes_{F^\nr}F$ is an unramified field extension of $F$.
We have an isomorphism:
\begin{align*}
 K_0^{\sigma^f}\otimes_{\Q_p}F &\cong\prod_{i=0}^{f-1}F \\
 b\otimes a &\mapsto(\tau_i(b)a)_i,
\end{align*}
which extends to a decomposition:
\begin{equation}\label{K0-dcp}
 K_0\otimes_{\Q_p}F\cong\prod_{i=0}^{f-1}K_F^{(i)}.
\end{equation}
The automorphism $\sigma\otimes\id$ on the left-hand side
corresponds to the product of the isomorphisms:
\[
 \sigma\otimes\id\colon K_F^{(i)}=K_0\otimes_{K_0^{\sigma^f}\!,\tau_i}F\longrightarrow
 K_0\otimes_{K_0^{\sigma^f}\!,\tau_{i+1}}F=K_F^{(i+1)}
\]
on the right-hand side.
In particular, $\sigma^f\otimes\id$ induces an automorphism $\sigma_F\colon K_F\rightarrow K_F$,
the Frobenius of $K_F$ over $F$.

\begin{dfn}
 A \emph{$\sigma_F$-$K_F$-space} is a pair $(N_F,\phi_F)$
 consisting of a finite dimensional $K_F$-vector-space $N_F$ and
 a $\sigma_F$-linear bijective endomorphism $\phi_F\colon N_F\rightarrow N_F$.
\end{dfn}

The $\sigma_F$-$K_F$-spaces form an $F$-linear abelian category $\sigma_F$-$K_F$-$\Sp$,
with morphisms given by $K_F$-linear maps compatible with $\phi_F$.
This category is in fact equivalent to $\Isoc(\k)_F$, as we will see shortly.

Consider the following maps of rings:
\begin{align*}
 \epsilon_0\colon K_0\otimes_{\Q_p}F\cong\prod_{i=0}^{f-1}K_F^{(i)}\xrightarrow{{\pr}_0} K_F,
  \qquad
 \rho\colon K_F &\longrightarrow\prod_{i=0}^{f-1}K_F^{(i)}\cong K_0\otimes_{\Q_p}F \\
 x              &\mapsto((\sigma^i\otimes\id)(x))_i.
\end{align*}
We use them to define functors:
\begin{align*}
 \epsilon_{0,*}\colon\Isoc(\k)_F    &\longrightarrow\text{$\sigma_F$-$K_F$-$\Sp$} \\
 (N,\phi,\iota) &\mapsto(N\otimes_{K_0\otimes_{\Q_p}F,\epsilon_0}K_F,\phi^f\otimes\sigma_F)
\end{align*}
and:
\begin{align*}
 \rho_*\colon\text{$\sigma_F$-$K_F$-$\Sp$}  &\longrightarrow\Isoc(\k)_F \\
 (N_F,\phi_F)   &\mapsto(N_F\otimes_{K_F,\rho}(K_0\otimes_{\Q_p}F),\phi,\iota).
\end{align*}
Here,
$N_F\otimes_{K_F,\rho}(K_0\otimes_{\Q_p}F)$ has an obvious structure of $K_0$-vector-space
and $\phi$ is given with respect to the decomposition:
\[
 N_F\otimes_{K_F,\rho}(K_0\otimes_{\Q_p}F)\cong
 \prod_{i=0}^{f-1}N_F\otimes_{K_F,\sigma^i\otimes\id}K_F^{(i)}
\]
as the product of the maps:
\begin{align*}
 \id\otimes(\sigma\otimes\id)
    &\colon N_F\otimes_{K_F}K_F^{(i)}\longrightarrow N_F\otimes_{K_F}K_F^{(i+1)}
 \quad\text{for }i=0,\dots,f-2, \\
 \phi_F\otimes(\sigma\otimes\id)
    &\colon N_F\otimes_{K_F}K_F^{(f-1)}\longrightarrow N_F\otimes_{K_F}K_F=N_F.
\end{align*}
Finally, $\iota$ is given by the natural $F$-multiplication on $K_0\otimes_{\Q_p}F$.

\begin{lem}[{\cite[8.5.4]{DOR}}]
 The functors $\epsilon_{0,*}$ and $\rho_*$ are quasi-inverse $F$-linear equivalences
 of categories between $\Isoc(\k)_F$ and $\sigma_F$-$K_F$-$\Sp$.
\end{lem}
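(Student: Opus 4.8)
The plan is to turn both categories into categories of explicit tuples of semilinear data and check that the two descriptions agree; the conceptual content is the standard fact that a module over a finite product of fields, equipped with a semilinear automorphism that cyclically permutes the factors, is the same thing as a single vector space over one factor together with a semilinear automorphism for the induced power of Frobenius. So, first I would unpack $\Isoc(\k)_F$. An object $(N,\phi,\iota)$ becomes a module over $K_0\otimes_{\Q_p}F$ via $\iota$ and the $K_0$-structure, so the orthogonal idempotents of $\prod_{i=0}^{f-1}K_F^{(i)}$ give a canonical decomposition $N=\bigoplus_{i=0}^{f-1}N^{(i)}$ with $N^{(i)}$ a finite-dimensional $K_F^{(i)}$-vector space (finite because $N$ is finite over $K_0$, hence over $K_0\otimes_{\Q_p}F$). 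Since $\phi$ is $(\sigma\otimes\id)$-semilinear and $\sigma\otimes\id$ carries the $i$-th idempotent to the $(i+1)$-th, $\phi$ restricts to bijections $\phi^{(i)}\colon N^{(i)}\to N^{(i+1)}$ (indices mod $f$), each semilinear over the isomorphism $\sigma\otimes\id\colon K_F^{(i)}\to K_F^{(i+1)}$; conversely such data reassemble to an object of $\Isoc(\k)_F$, and morphisms correspond to tuples of $K_F^{(i)}$-linear maps commuting with the $\phi^{(i)}$. Thus $\Isoc(\k)_F$ is equivalent to the category of ``cyclic $f$-gons'' of semilinear isomorphisms.

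Next I would collapse the cycle: the maps $\phi^{(0)},\dots,\phi^{(f-2)}$ are unconstrained isomorphisms, so, putting $N_F:=N^{(0)}$, a cyclic $f$-gon is the same as the pair $(N_F,\phi_F)$ where $\phi_F:=\phi^{(f-1)}\circ\cdots\circ\phi^{(0)}$ is the monodromy around the loop --- a bijective, $\sigma_F$-semilinear endomorphism of $N_F$, that is, exactly a $\sigma_F$-$K_F$-space. The inverse construction sets $N^{(i)}:=N_F\otimes_{K_F,\sigma^i\otimes\id}K_F^{(i)}$, with $\phi^{(i)}=\id\otimes(\sigma\otimes\id)$ for $i\le f-2$ and $\phi^{(f-1)}=\phi_F\otimes(\sigma\otimes\id)$. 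Translated through the equivalence just described, the collapsing functor is $\epsilon_{0,*}$: indeed $N\otimes_{K_0\otimes_{\Q_p}F,\epsilon_0}K_F$ is canonically $N^{(0)}$, and $\phi^f$ preserves each $N^{(i)}$, restricting on $N^{(0)}$ to the $\sigma_F$-semilinear map $\phi^{(f-1)}\circ\cdots\circ\phi^{(0)}=\phi_F$. Likewise the inverse construction is $\rho_*$, whose recipe for $\phi$ is precisely the reassembly above (and whose $\iota$ is the built-in $F$-action).

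Then I would exhibit the two natural isomorphisms. For $\epsilon_{0,*}\circ\rho_*$ the counit is the canonical identification $\bigl(N_F\otimes_{K_F,\rho}(K_0\otimes_{\Q_p}F)\bigr)\otimes_{K_0\otimes_{\Q_p}F,\epsilon_0}K_F\cong N_F$ coming from $\epsilon_0\circ\rho=\id_{K_F}$, and $\phi^f\otimes\sigma_F$ becomes $\phi_F$ by the computation above. For $\rho_*\circ\epsilon_{0,*}$ the unit sends $N=\bigoplus_i N^{(i)}$ to $\bigoplus_i N_F\otimes_{K_F,\sigma^i\otimes\id}K_F^{(i)}$ via the maps induced by the semilinear isomorphisms $\phi^{(i-1)}\circ\cdots\circ\phi^{(0)}\colon N_F\to N^{(i)}$; it is $(K_0\otimes_{\Q_p}F)$-linear, hence compatible with $\iota$, and a component-by-component check --- using $\phi^{(i)}\circ(\phi^{(i-1)}\circ\cdots\circ\phi^{(0)})=\phi^{(i)}\circ\cdots\circ\phi^{(0)}$ on the $i$-th summand and $\phi^{(f-1)}\circ\cdots\circ\phi^{(0)}=\phi_F$ on the last --- shows it intertwines the two $\phi$'s. $F$-linearity of the functors and naturality of the transformations are immediate, since everything is assembled from $K_F^{(i)}$-linear maps and base change along the $F$-algebra homomorphisms $\epsilon_0$ and $\rho$.

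The argument is conceptually routine; the only real work is the bookkeeping in the last step --- keeping track of which semilinearity each $\phi^{(i)}$ carries and verifying that the monodromy of $\rho_*\circ\epsilon_{0,*}$ recovers $\phi$ on the nose, not merely up to an automorphism of $N_F$. I expect this to be a slightly tedious but mechanical diagram chase, which is why it is reasonable (as the authors do) to cite \cite[8.5.4]{DOR} for the details.
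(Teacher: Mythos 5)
Your argument is correct and is essentially the standard proof of this "unfolding/collapsing" equivalence, which is what \cite[8.5.4]{DOR} carries out (the paper itself cites the reference without reproducing a proof). You correctly identify the decomposition $N=\bigoplus_i N^{(i)}$, the $(\sigma\otimes\id)$-semilinearity of $\phi$ cyclically shifting the graded pieces, the identification of $\epsilon_{0,*}$ and $\rho_*$ with collapse and reassembly, and the two natural isomorphisms with the monodromy computation showing $\rho_*\circ\epsilon_{0,*}$ recovers $\phi$ exactly.
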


\begin{rmk}\label{rmk-ic-end}
 As an important consequence of this lemma, given any $(N,\phi,\iota)\in\Isoc(\k)_F$,
 we may always write $N\cong N_F\otimes_{K_F,\rho}(K_0\otimes_{\Q_p}F)$
 for some $K_F$-vector-space $N_F$.
 In particular, $N$ is free as a module over $K_0\otimes_{\Q_p}F$,
 so its $K_0$-dimension is a multiple of $\dim_{K_0}(K_0\otimes_{\Q_p}F)=[F:\Q_p]=d$.
 In other words, $\height(N,\phi)\in d\N$.
 
 Note that if $F$ is a totally ramified extension of $\Q_p$,
 this is more easily granted by the fact that $K_0\otimes_{\Q_p}F$ is itself a field.
 At the other extreme, assume that $F$ is unramified over $\Q_p$
 and admits an embedding into $K_0$ (e.g. if $\k$ is algebraically closed).
 Then, fixing $F\subseteq K_0$, we have a decomposition into sub-$K_0$-vector-spaces:
 \[
  N=\bigoplus_{i\in\Z/d\Z}N_i, \quad\text{with }
  N_i=\Set{v\in N|\forall a\in F\colon\iota(a)(v)=\sigma^i(a)v}.
 \]
 Moreover, $\phi$ induces $\sigma$-linear bijections $N_i\rightarrow N_{i+1}$,
 so that $\dim_{K_0}N_i$ is constant for $i\in\Z/d\Z$ and again
 $\dim_{K_0}N=d\cdot\dim_{K_0}N_0\in d\N$.
 The formalism above combines these last two arguments in the general case.
\end{rmk}

\paragraph{The Newton polygon.}
Let $(N,\phi,\iota)$ be an isocrystal over $\k$ with coefficients in $F$, of height $h=dn$
(note that $h\in d\N$ by the previous remark).
By functoriality of the slope decomposition~\eqref{slp-dcp} of $(N,\phi)$,
the action of $F$ via $\iota$ restricts to each isotypical component
$(N_\lambda,\phi_\lambda)$.
These components are hence again isocrystals with coefficients in $F$,
whose height is then a multiple of $d$.
Thus, in the Newton polygon of $(N,\phi)$, each entry is repeated a multiple of $d$ times.
In light of this, if $\Newt(N,\phi)=(-\lambda_1^{(h_1)},\dots,-\lambda_m^{(h_m)})\in\Q^h_+$,
we define the \emph{Newton polygon} of $(N,\phi,\iota)$ to be:
\[
 \Newt(N,\phi,\iota):=(-\lambda_1^{(h_1/d)},\dots,-\lambda_m^{(h_m/d)})\in\Q^n_+.
\]
Equivalently:
\[
 \Newt(N,\phi,\iota)\colon x\longmapsto\frac{1}{d}\Newt(N,\phi)(dx).
\]
Note that the break points of $\Newt(N,\phi,\iota)$ lie in $\Z\times\frac{1}{d}\Z$.
Furthermore, we have that $\Newt(N,\phi,\iota)(n)=-\dim(N,\phi)/d$.

\subsection{Filtered vector spaces}

Let here $K_2|K_1$ be any field extension and let us recall the category
$\Fil\Vect_{K_2|K_1}$ of \emph{$K_2$-filtered $K_1$-vector-spaces}.
Objects are finite dimensional $K_1$-vector-spaces~$V$ equipped with
a $\Z$-filtration $\Fil^\bullet V_{K_2}=(\Fil^i V_{K_2})_{i\in\Z}$ of
$V_{K_2}:=V\otimes_{K_1}K_2$ by sub-$K_2$-vector-spaces, which is decreasing
(i.e.\ $\Fil^i V_{K_2}\supseteq\Fil^{i+1}V_{K_2}$ for every $i\in\Z$), exhaustive and separated
(i.e.\ respectively $\Fil^i V_{K_2}=V_{K_2}$ and $\Fil^j V_{K_2}=0$ for some integers $i\le j$).
A morphism between two objects $(V'\!,\Fil^\bullet V'_{K_2}),(V,\Fil^\bullet V_{K_2})$
is given by a $K_1$-linear map $f\colon V'\rightarrow V$ whose base change
$f_{K_2}\colon V'_{K_2}\rightarrow V_{K_2}$ is compatible with the filtrations,
meaning that $f_{K_2}(\Fil^i V'_{K_2})\subseteq\Fil^i V_{K_2}$ for all $i\in\Z$;
if $f_{K_2}(\Fil^i V'_{K_2})=f_{K_2}(V'_{K_2})\cap\Fil^i V_{K_2}$ for all $i\in\Z$,
then we say that $f$ is a \emph{strict} morphism.

The category $\Fil\Vect_{K_2|K_1}$ is $K_1$-linear and quasi-abelian,
with short exact sequences given by short sequences of strict morphisms
which are exact on the underlying vector spaces.
In this case, the first term and the last term of the sequence are called respectively
a \emph{subobject} and a \emph{quotient object} of the middle term.

For $(V,\Fil^\bullet V_{K_2})\in\Fil\Vect_{K_2|K_1}$ and $i\in\Z$, we set:
\[
\gr^i V_{K_2}:=\Fil^i V_{K_2}/\Fil^{i+1} V_{K_2} \qquad\text{and}\qquad
\deg(V,\Fil^\bullet V_{K_2}):=\sum_{i\in\Z}i\cdot\dim_{K_2}\gr^i V_{K_2}\in\Z,
\]
called respectively the \emph{$i$-th graded piece} and the \emph{degree}
of $(V,\Fil^\bullet V_{K_2})$.
Note that a short sequence is exact if and only if
it induces exact sequences on all the $i$-th graded pieces.
In particular, the degree function is additive.

The indices $i\in\Z$ such that $\gr^i V_{K_2}\ne 0$ are called the \emph{jumps}
of the filtration.
If these are, say, $i_1<\dots<i_m$ and we let $n_j:=\dim_{K_2}\gr^{i_j}V_{K_2}$,
$j=1,\dots,m$, and $n:=\dim_{K_1}V=n_1+\dots+n_m$,
then the \emph{type} of $(V,\Fil^\bullet V_{K_2})$ is defined to be:
\[
 f(V,\Fil^\bullet V_{K_2}):=(-i_1^{(n_1)},\dots,-i_m^{(n_m)})\in\Q^n_+.
\]
The break points of $f(V,\Fil^\bullet V_{K_2})$ lie in $\Z\times\Z$ and
$f(V,\Fil^\bullet V_{K_2})(n)=-\deg(V,\Fil^\bullet V_{K_2})$.
Moreover, if:
\[
 0\longrightarrow(V'\!,\Fil^\bullet V'_{K_2})\longrightarrow(V,\Fil^\bullet V_{K_2})
  \longrightarrow(V''\!,\Fil^\bullet V''_{K_2})\longrightarrow0
\]
is a short exact sequence of $K_2$-filtered $K_1$-vector-spaces, then:
\[
 f(V'\!,\Fil^\bullet V'_{K_2})=(-i_1^{(n'_1)},\dots,-i_m^{(n'_m)}) \qquad\text{and}\qquad
 f(V''\!,\Fil^\bullet V''_{K_2})=(-i_1^{(n''_1)},\dots,-i_m^{(n''_m)})
\]
with $n'_j+n''_j=n_j$ for all $1\le j\le m$.
Note that compared to the definition of the type of a filtration from \cite[\S I.1]{DOR},
here we invert the sign of the jumps.
This is for consistency with our definition of the Newton polygon for isocrystals,
especially in consideration of the following sections.

We remark that if $K_2'$ is a field extension of $K_2$
and $K_1'$ is a field extension of $K_1$ contained in $K_2'$,
then the obvious base change functor $\Fil\Vect_{K_2|K_1}\rightarrow\Fil\Vect_{K_2'|K_1'}$
is exact and preserves the type.

\begin{lem}\label{lem-fvs}
 Let $(V,\Fil^\bullet V_{K_2})$ be a $K_2$-filtered $K_1$-vector-space
 and $(V'\!,\Fil^\bullet V'_{K_2})$ a subobject. Set $n':=\dim_{K_1}V'$. Then:
 \[
  -\deg(V'\!,\Fil^\bullet V'_{K_2})\le f(V,\Fil^\bullet V_{K_2})(n'),
 \]
 with equality if and only if the type of $(V'\!,\Fil^\bullet V'_{K_2})$ equals
 the restriction of $f(V,\Fil^\bullet V_{K_2})$ to $[0,n']$.
\end{lem}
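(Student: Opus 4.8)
The plan is to reduce everything to a counting argument on graded pieces. First I would unwind the definition of the type: if the jumps of $(V,\Fil^\bullet V_{K_2})$ are $i_1<\dots<i_m$ with multiplicities $n_1,\dots,n_m$ (so $n=\sum n_j$), then $f(V,\Fil^\bullet V_{K_2})(n')$ is computed by walking along the polygon with the steepest slopes $-i_1$ first: concretely, writing $n'=n_1+\dots+n_{k-1}+r$ with $0\le r<n_k$, one has
\[
 f(V,\Fil^\bullet V_{K_2})(n')=-\sum_{j=1}^{k-1}i_j n_j-i_k r.
\]
On the other side, $-\deg(V'\!,\Fil^\bullet V'_{K_2})=\sum_{i\in\Z}i\cdot\dim_{K_2}\gr^i V'_{K_2}$, and since $(V'\!,\Fil^\bullet V'_{K_2})$ is a \emph{subobject} in the quasi-abelian sense, the inclusion $V'\hookrightarrow V$ is a strict monomorphism, so $\Fil^i V'_{K_2}=V'_{K_2}\cap\Fil^i V_{K_2}$ for all $i$; hence the induced maps on graded pieces $\gr^i V'_{K_2}\hookrightarrow\gr^i V_{K_2}$ are injective. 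Set $n'_j:=\dim_{K_2}\gr^{i_j}V'_{K_2}$, so $0\le n'_j\le n_j$ and $\sum_j n'_j=n'$ (the $K_2$-dimension of $V'_{K_2}$ equals $\dim_{K_1}V'=n'$ since it is exhausted by the filtration).

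The inequality then becomes a purely combinatorial claim: for any integers $0\le n'_j\le n_j$ with $\sum n'_j=n'$, one has $-\sum_j i_j n'_j\le f(V,\Fil^\bullet V_{K_2})(n')$. This is exactly the statement that, among all ways of picking a total of $n'$ units distributed so that at most $n_j$ land at ``height $-i_j$'', the sum $\sum(-i_j)n'_j$ is maximised by greedily filling the steepest (most negative $i_j$, i.e.\ smallest $i_j$) slots first — which is the value on the right. I would prove this by a standard exchange argument: if some mass sits at a shallower slope $-i_b$ while a steeper slot $-i_a$ (with $a<b$, so $i_a<i_b$) is not yet full, moving one unit from $b$ to $a$ strictly increases $\sum(-i_j)n'_j$; iterating reaches the greedy configuration without decreasing the sum. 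This gives the inequality, and also pins down the equality case: equality holds iff the configuration $(n'_j)_j$ already is the greedy one, i.e.\ $n'_j=n_j$ for $j<k$, $n'_k=r$, and $n'_j=0$ for $j>k$ (with a small caveat if $r=0$, handled by allowing the last nonzero index to be $k-1$). Translating back, equality means exactly that the jumps of $(V'\!,\Fil^\bullet V'_{K_2})$ with their multiplicities are $i_1^{(n_1)},\dots,i_{k-1}^{(n_{k-1})},i_k^{(r)}$, which says $f(V'\!,\Fil^\bullet V'_{K_2})$ is the restriction of $f(V,\Fil^\bullet V_{K_2})$ to $[0,n']$.

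The one genuine point to be careful about is the strictness of the monomorphism $V'\hookrightarrow V$: without it, $\Fil^\bullet V'_{K_2}$ could be an arbitrary subfiltration and the graded pieces need not inject, so the degree could be larger and the inequality false. Since ``subobject'' in a quasi-abelian category means the inclusion is a strict mono, this is built into the hypothesis, but I would state it explicitly as the first line of the proof. Everything else — that $\sum_j n'_j=n'$ because the filtration on $V'_{K_2}$ is exhaustive and separated, and that the base change $V'\otimes_{K_1}K_2$ has the same dimension over $K_2$ as $V'$ over $K_1$ — is routine. So the main (mild) obstacle is purely the bookkeeping of the greedy/exchange argument and getting the equality characterisation exactly right in the boundary case $r=0$; there is no deep input needed beyond the definitions recalled just above.
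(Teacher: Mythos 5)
Your proposal is correct and follows essentially the same route as the paper's proof: both reduce to the observation that strictness of the subobject forces the graded pieces of $V'_{K_2}$ to inject into those of $V_{K_2}$, so $n'_j\le n_j$ with $\sum_j n'_j=n'$, and the lemma becomes the elementary fact that greedily filling the steepest slopes maximises $-\sum_j i_j n'_j$. The only (cosmetic) difference is that you finish with a greedy/exchange argument, whereas the paper bounds the relevant difference directly by a one-step chain of inequalities; both yield the equality characterisation in the same way, and your handling of the boundary case $r=0$ is fine.
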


\begin{proof}
 It is enough to observe that the type of $(V'\!,\Fil^\bullet V'_{K_2})$
 is a polygon with the same slopes as $f(V,\Fil^\bullet V_{K_2})$,
 but with lower (possibly zero) multiplicity.
 
 In numbers, let $i_1<\dots<i_m$ be the jumps of $(V,\Fil^\bullet V_{K_2})$
 and $n_j:=\dim_{K_2}\gr^{i_j}V_{K_2}$, for $j=1,\dots,m$.
 The jumps of $(V'\!,\Fil^\bullet V'_{K_2})$ are among those of $(V,\Fil^\bullet V_{K_2})$,
 with $n'_j:=\dim_{K_2}\gr^{i_j}V'_{K_2}\le n_j$, $j=1,\dots,m$, and $n'=n'_1+\dots+n'_m$.
 Let $l\in\Set{1,\dots,m}$ be such that $n_1+\dots+n_{l-1}<n'\le n_1+\dots+n_l$
 (the case $n'=0$ being trivial).
 Then, the claimed inequality reads:
 \[
  -\sum_{j=1}^m i_jn'_j\le-\sum_{j=1}^{l-1}i_jn_j-i_l(n'-(n_1+\dots+n_{l-1})).
 \]
 Now, for $j\le l-1$ we have $i_j<i_l$, whereas for $j\ge l+1$ we have $i_j>i_l$. Thus:
 \begin{equation}\label{lem-fvs-f1}
  \sum_{j=1}^{l-1}i_j(n_j-n'_j)\le i_l\sum_{j=1}^{l-1}(n_j-n'_j)
  \qquad\text{and}\qquad
  \sum_{j=l+1}^m i_jn'_j\ge i_l\sum_{j=l+1}^m n'_j.
 \end{equation}
 Altogether:
 \begin{multline}\label{lem-fvs-f2}
  \sum_{j=1}^{l-1}i_jn_j+i_l(n'-(n_1+\dots+n_{l-1}))-\sum_{j=1}^m i_jn'_j= \\
  =\sum_{j=1}^{l-1}i_j(n_j-n'_j)+i_l(n'-(n_1+\dots+n_{l-1})-n'_l)
   -\sum_{j=l+1}^m i_jn'_j\le \\
  \le i_l(n'-(n'_1+\dots+n'_l))=0,
 \end{multline}
 which is the desired inequality.
 If $f(V'\!,\Fil^\bullet V'_{K_2})$ equals the restriction of $f(V,\Fil^\bullet V_{K_2})$ to 
 $[0,n']$, then:
 \[
  -\deg(V'\!,\Fil^\bullet V'_{K_2})=f(V'\!,\Fil^\bullet V'_{K_2})(n')
   =f(V,\Fil^\bullet V_{K_2})(n').
 \]
 Conversely, if $-\deg(V'\!,\Fil^\bullet V'_{K_2})=f(V,\Fil^\bullet V_{K_2})(n')$,
 then we have equality in \eqref{lem-fvs-f2} and hence in \eqref{lem-fvs-f1}.
 In particular, $n'_j=n_j$ for $j\le l-1$ and $n'_j=0$ for $j\ge l+1$,
 which implies that $f(V'\!,\Fil^\bullet V'_{K_2})$ equals
 the restriction of $f(V,\Fil^\bullet V_{K_2})$ to $[0,n']$.
\end{proof}

\subsection{Filtered isocrystals}

A \emph{filtered isocrystal} over $K$ is an isocrystal $(N,\phi)$ over $\k$,
together with a decreasing, exhaustive and separated $\Z$-filtration
$\Fil^\bullet N_K$ of $N_K=N\otimes_{K_0}K$ by sub-$K$-vector-spaces.
The filtered isocrystals over $K$ form a $\Q_p$-linear category $\Fil\Isoc_K$,
with morphisms given by maps of isocrystals
whose base change to $K$ is compatible with the filtrations;
a morphism is \emph{strict} if so is the resulting map in $\Fil\Vect_{K|K_0}$.
In fact, $\Fil\Isoc_K$ is a quasi-abelian category too,
with short exact sequences given by short sequences of strict morphisms
which are exact on the underlying isocrystals;
we deduce the meaning of \emph{subobject} and \emph{quotient object}.

The forgetful functors:
\begin{align*}
 \Fil\Isoc_K    &\longrightarrow\Isoc(\k)   &\text{and} &&
 \Fil\Isoc_K    &\longrightarrow\Fil\Vect_{K|K_0}   \\
 (N,\phi,\Fil^\bullet N_K)  &\longmapsto(N,\phi)    &   &&
 (N,\phi,\Fil^\bullet N_K)  &\longmapsto(N,\Fil^\bullet N_K)
\end{align*}
are exact.
Given a filtered isocrystal $(N,\phi,\Fil^\bullet N_K)$ over $K$,
say with $\height(N,\phi)=h$, we define its \emph{Newton polygon} to be:
\[
 \Newt(N,\phi,\Fil^\bullet N_K):=\Newt(N,\phi)\in\Q^h_+
\]
and its \emph{Hodge polygon} to be:
\[
 \Hdg(N,\phi,\Fil^\bullet N_K):=f(N,\Fil^\bullet N_K)\in\Q^h_+.
\]
Furthermore, we define its \emph{Newton number} to be:
\[
 t_N(N,\phi,\Fil^\bullet N_K):=\dim(N,\phi)\in\Z
\]
and its \emph{Hodge number} to be:
\[
 t_H(N,\phi,\Fil^\bullet N_K):=\deg(N,\Fil^\bullet N_K)\in\Z.
\]
In this way,
the Hodge polygon and number only depend on the underlying filtered vector space,
whereas the Newton polygon and number are invariants of the underlying isocrystal.
By the properties already known,
we see that the break points of both polygons lie in $\Z\times\Z$
and that the Newton number and the Hodge number are additive on short exact sequences.
Moreover:
\[
 \Newt(N,\phi,\Fil^\bullet N_K)(h)=-t_N(N,\phi,\Fil^\bullet N_K)
\]
and:
\[
 \Hdg(N,\phi,\Fil^\bullet N_K)(h)=-t_H(N,\phi,\Fil^\bullet N_K).
\]

\subsection{Weakly admissible filtered isocrystals}

A \emph{filtered isocrystal} $(N,\phi,\Fil^\bullet N_K)$ over $K$ is called
\emph{weakly admissible} if for all subobjects $(N'\!,\phi'\!,\Fil^\bullet N'_K)$ we have:
\[
 t_H(N'\!,\phi'\!,\Fil^\bullet N'_K)\le t_N(N'\!,\phi'\!,\Fil^\bullet N'_K),
\]
with equality holding for $(N,\phi,\Fil^\bullet N_K)$ itself.
Let $\Fil\Isoc_K^\wa$ denote the full subcategory of $\Fil\Isoc_K$
consisting of the weakly admissible objects.
This is an abelian category,
with kernels and cokernels coinciding with those in $\Fil\Isoc_K$;
moreover, if two out of three objects of a short exact sequence in $\Fil\Isoc_K$
are weakly admissible, then the third object is weakly admissible too (cf.\ \cite[\S 4.2]{Fo2}).

A fundamental implication of weak admissibility
is expressed by the following inequality of polygons,
which is part of the characterisation given in \cite[\S 4.3]{Fo2}
(taking into account a different normalisation).
It can also be seen as an easy consequence of Lemma~\ref{lem-fvs}
(see the proof of Proposition~\ref{Nwt<Hdg-fic-end} for the argument in a more general setting).

\begin{prp}\label{Nwt<Hdg-fic}
 Let $(N,\phi,\Fil^\bullet N_K)$ be a weakly admissible filtered isocrystal over $K$.
 Then:
 \[
  \Newt(N,\phi,\Fil^\bullet N_K)\le\Hdg(N,\phi,\Fil^\bullet N_K).
 \]
\end{prp}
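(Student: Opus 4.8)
The plan is to bound every point whose concave envelope defines the Newton polygon by the Hodge polygon, and then pass to the envelope using concavity. Write $h=\height(N,\phi)$, and for an arbitrary sub-isocrystal $(N',\phi')$ of $(N,\phi)$ equip $N'$ with the induced filtration $\Fil^i N'_K:=N'_K\cap\Fil^i N_K$, which makes $(N',\phi',\Fil^\bullet N'_K)$ a strict subobject of $(N,\phi,\Fil^\bullet N_K)$ in $\Fil\Isoc_K$.

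First, taking $N'=N$ and using that $(N,\phi,\Fil^\bullet N_K)$ is itself weakly admissible gives $t_H(N,\phi,\Fil^\bullet N_K)=t_N(N,\phi,\Fil^\bullet N_K)$, hence the two polygons share their right endpoint: $\Hdg(N,\phi,\Fil^\bullet N_K)(h)=\Newt(N,\phi,\Fil^\bullet N_K)(h)$. As both polygons also start at the origin, it suffices to prove the pointwise inequality $\Newt(N,\phi,\Fil^\bullet N_K)(x)\le\Hdg(N,\phi,\Fil^\bullet N_K)(x)$ on $[0,h]$ in order to get the asserted relation in $\Q^h_+$.

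Now for a general sub-isocrystal $(N',\phi')$, weak admissibility gives $t_H(N',\phi',\Fil^\bullet N'_K)\le t_N(N',\phi',\Fil^\bullet N'_K)$, i.e.\ $-\dim(N',\phi')\le-\deg(N',\Fil^\bullet N'_K)$. Meanwhile, Lemma~\ref{lem-fvs} applied to the subobject $(N',\Fil^\bullet N'_K)$ of $(N,\Fil^\bullet N_K)$ in $\Fil\Vect_{K|K_0}$ gives $-\deg(N',\Fil^\bullet N'_K)\le f(N,\Fil^\bullet N_K)(n')$, where $n'=\dim_{K_0}N'=\height(N',\phi')$ and $f(N,\Fil^\bullet N_K)=\Hdg(N,\phi,\Fil^\bullet N_K)$. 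Chaining these two inequalities, the point $\bigl(\height(N',\phi'),\,-\dim(N',\phi')\bigr)$ lies on or below the graph of $\Hdg(N,\phi,\Fil^\bullet N_K)$.

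Finally, $\Newt(N,\phi,\Fil^\bullet N_K)=\Newt(N,\phi)$ is the concave envelope of the points $\bigl(\height(N',\phi'),\,-\dim(N',\phi')\bigr)$ taken over all sub-isocrystals $(N',\phi')$, and we have just shown each such point lies weakly below the concave function $\Hdg(N,\phi,\Fil^\bullet N_K)$. Since the concave envelope of a collection of points is the smallest concave function dominating them --- any concave $g$ dominating the points dominates every convex combination of them, hence the envelope --- we conclude $\Newt(N,\phi,\Fil^\bullet N_K)\le\Hdg(N,\phi,\Fil^\bullet N_K)$ pointwise on $[0,h]$, which together with the endpoint equality proves the proposition. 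The whole thing is bookkeeping once Lemma~\ref{lem-fvs} is in hand; the only points worth a second look are that the induced filtration genuinely makes $N'$ a strict subobject (so weak admissibility applies to it) and that the last step is really a pointwise domination statement, so that no separate argument about break points is needed.
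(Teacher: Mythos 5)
Your argument is correct and follows essentially the same route the paper indicates: the paper refers the reader to the proof of Proposition~\ref{Nwt<Hdg-fic-end}, which likewise combines weak admissibility with Lemma~\ref{lem-fvs} and concavity of the Hodge polygon, the only cosmetic difference being that the paper reduces to break points of the Newton polygon while you bound all the points $(\height(N',\phi'),-\dim(N',\phi'))$ and invoke the concave-envelope characterisation directly.
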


\paragraph{The Harder-Narasimhan polygon.}
As explained in \cite[\S 5.2.3]{F2},
the category of weakly admissible filtered isocrystals over $K$
admits a Harder-Narasimhan formalism for the slope function $\mi=-t_N/\height=-t_H/\height$.
More precisely, to each nonzero object $\NF=(N,\phi,\Fil^\bullet N_K)\in\Fil\Isoc_K^\wa$
we associate its \emph{slope}:
\[
 \mi(\NF):=\frac{-t_N(N,\phi,\Fil^\bullet N_K)}{\height(N,\phi)}
 =\frac{-t_H(N,\phi,\Fil^\bullet N_K)}{\height(N,\phi)}\in\Q
\]
and say that $\NF$ is \emph{semi-stable} of slope $\mi(\NF)$ if
for every subobject $\NF'\subseteq\NF$ we have $\mi(\NF')\le\mi(\NF)$.
In general, there exists a unique \emph{Harder-Narasimhan filtration}:
\begin{equation}\label{HN-fil-fic}
 0=\NF_0\subsetneq\NF_1\subsetneq\dots\subsetneq\NF_m=\NF
\end{equation}
in $\Fil\Isoc_K^\wa$, such that each $\NF_i/\NF_{i-1}$ is semi-stable, say of slope $\mi_i$,
with $\mi_1>\dots>\mi_m$.
Some functoriality of this filtration follows from the fact that if $\NF'$ and $\NF''$
are two semi-stable objects with $\mi(\NF')>\mi(\NF'')$,
then there are no nontrivial morphisms $\NF'\rightarrow\NF''$.
Now, for $\NF$ as above, with Harder-Narasimhan filtration as in \eqref{HN-fil-fic},
let $h:=\height(N,\phi)$ and let $h_i$ be the height of the underlying isocrystal of
$\NF_i/\NF_{i-1}$, for $i=1,\dots,m$.
Then, we define the \emph{Harder-Narasimhan polygon} of $\NF$ to be:
\[
 \HN(\NF):=(\mi_1^{(h_1)},\dots,\mi_m^{(h_m)})\in\Q^h_+.
\]
This polygon is the concave envelope of the points $(\height(N'\!,\phi'),-t_N(\NF'))$ over
all subobjects $\NF'=(N'\!,\phi'\!,\Fil^\bullet N'_K)$ of $\NF$ in $\Fil\Isoc_K^\wa$.
Its break points lie in $\Z\times\Z$ and we have $\HN(\NF)(h)=-t_N(\NF)=-t_H(\NF)$.

\bigskip\noindent
Given $\NF=(N,\phi,\Fil^\bullet N_K)\in\Fil\Isoc_K^\wa$, every subobject
$\NF'=(N'\!,\phi'\!,\Fil^\bullet N'_K)$ gives rise to
a sub-isocrystal $(N'\!,\phi')$ of $(N,\phi)$, hence the point $(\height(N'\!,\phi'),-t_N(\NF'))$,
which is equal to $(\height(N'\!,\phi'),-\dim(N'\!,\phi'))$, lies below $\Newt(N,\phi)=\Newt(\NF)$.
We easily deduce the following inequality of polygons.

\begin{prp}\label{HN<Nwt-fic}
 Let $(N,\phi,\Fil^\bullet N_K)$ be a weakly admissible filtered isocrystal over $K$.
 Then:
 \[
  \HN(N,\phi,\Fil^\bullet N_K)\le\Newt(N,\phi,\Fil^\bullet N_K).
 \]
\end{prp}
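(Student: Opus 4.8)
The plan is to derive the inequality $\HN(\NF)\le\Newt(\NF)$ from the characterisation of the Harder-Narasimhan polygon as a concave envelope, using Proposition~\ref{Nwt<Hdg-fic} together with the fact that the Newton polygon itself is the concave envelope of points attached to sub-isocrystals. The key observation, already flagged in the paragraph preceding the statement, is that every subobject $\NF'=(N'\!,\phi'\!,\Fil^\bullet N'_K)$ of $\NF$ in $\Fil\Isoc_K^\wa$ has an underlying sub-isocrystal $(N'\!,\phi')$ of $(N,\phi)$, and that by definition of the Newton polygon as a concave envelope the point $(\height(N'\!,\phi'),-\dim(N'\!,\phi'))$ lies weakly below $\Newt(N,\phi)$. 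Since $\dim(N'\!,\phi')=t_N(\NF')$, this says $(\height(N'\!,\phi'),-t_N(\NF'))$ lies below $\Newt(\NF)$.

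The main step is then purely formal: I would recall that $\HN(\NF)$ is precisely the concave envelope of the set of points $\{(\height(N'\!,\phi'),-t_N(\NF'))\}$ ranging over all subobjects $\NF'\subseteq\NF$ in $\Fil\Isoc_K^\wa$ (this is stated in the excerpt). A concave envelope of a set of points is the smallest concave function lying above all of them; since $\Newt(\NF)$ is a concave function lying above all the same points, and since both polygons are concave functions on $[0,h]$ starting at the origin, we get $\HN(\NF)\le\Newt(\NF)$ pointwise. To match the partial order $\le$ on $\Q^h_+$ one also needs the endpoints to agree: indeed $\HN(\NF)(h)=-t_N(\NF)=\Newt(\NF)(h)$, using $\HN(\NF)(h)=-t_N(\NF)$ from the excerpt and $\Newt(\NF)(h)=-\dim(N,\phi)=-t_N(\NF)$. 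Hence $\HN(\NF)\le\Newt(\NF)$ in $\Q^h_+$.

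There is really no hard obstacle here: the content is entirely contained in the two facts that (i) the Newton polygon is a concave envelope over sub-isocrystals, and (ii) the Harder-Narasimhan polygon is a concave envelope over the smaller class of weakly admissible subobjects, which form a subset of all sub-isocrystals after forgetting the filtration. The only point requiring a sentence of care is that forgetting the filtration is an exact functor $\Fil\Isoc_K^\wa\to\Isoc(\k)$ that sends subobjects to sub-isocrystals and preserves height, so the supremum defining $\HN$ is taken over a subfamily of the points defining $\Newt$; taking concave envelopes is monotone in the set of points, so the envelope of the smaller family lies below that of the larger. I do not expect to invoke Proposition~\ref{Nwt<Hdg-fic} at all in the cleanest argument — it suffices to compare the two envelopes directly — though one could alternatively route through it via the chain $\HN\le$ (type of HN-vector data) using $t_H=t_N$ on weakly admissible objects; the direct comparison is shorter and is the approach I would write up.
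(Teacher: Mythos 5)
Your argument is correct and is essentially the proof the paper gives: the paragraph immediately preceding the statement observes that every subobject of $\NF$ in $\Fil\Isoc_K^\wa$ has an underlying sub-isocrystal whose associated point lies below $\Newt(\NF)$, and then compares the two concave envelopes. Your additional check that the endpoints agree (so that the pointwise inequality upgrades to the partial order on $\Q^h_+$) is a minor detail the paper leaves implicit but is correct.
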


\begin{rmk}
 It may be useful, at this point, to mention another instance of Harder-Narasimhan formalism which
 is implicitely related to the one introduced above,
 although it will not play an active role in our discussion.
 Namely, given any field extension $K_2|K_1$,
 the category of $K_2$-filtered $K_1$-vector-spaces admits a Harder-Narasimhan formalism for
 the slope function $\mi=\deg/\dim$ (cf.\ \cite[\S I.3]{DOR}).
 Similarly as above, to each nonzero object~$(V,\Fil^\bullet V_{K_2})\in\Fil\Vect_{K_2|K_1}$ is
 associated an element of the Newton set $\Q^n_+$, where $n=\dim_{K_1}V$,
 called the \emph{HN-vector} of $(V,\Fil^\bullet V_{K_2})$.
 This is not to be confused with the type of $(V,\Fil^\bullet V_{K_2})$;
 in fact, Lemma~\ref{lem-fvs} implies that,
 setting up the formalism with respect to $-\mi$ (in order to comply with our normalisation),
 then the HN-vector of $(V,\Fil^\bullet V_{K_2})$ lies below its type.

 Now, given a weakly admissible filtered isocrystal $(N,\phi,\Fil^\bullet N_K)$ over $K$,
 its Harder-Narasimhan polygon $\HN(N,\phi,\Fil^\bullet N_K)$ can be recovered as
 the HN-vector of a certain associated $C$-filtered $\Q_p$-vector-space,
 where $C$ is the completion of an algebraic closure of $K$ (note that we are not talking about
 the underlying $K$-filtered $K_0$-vector-space~$(N,\Fil^\bullet N_K)$);
 for the details of this, which go beyond the scope of this article,
 we refer to \cite[Propositions~10 and~11]{F2}.
 The HN-vector (with respect to $-\mu$) of the underlying filtration $(N,\Fil^\bullet N_K)$,
 instead, yields yet another polygon, which lies above $\HN(N,\phi,\Fil^\bullet N_K)$
 (by a similar argument as for Proposition~\ref{HN<Nwt-fic},
 since $-t_N=-t_H$ for weakly admissible objects)
 and below $\Hdg(N,\phi,\Fil^\bullet N_K)$ (by Lemma~\ref{lem-fvs}).
 These observations will not be used in the sequel.
\end{rmk}

\subsection{Filtered isocrystals with coefficients}

\begin{dfn}
 A \emph{filtered isocrystal} over $K$ \emph{with coefficients} in $F$ is a pair $(\NF,\iota)$
 consisting of a filtered isocrystal $\NF=(N,\phi,\Fil^\bullet N_K)$ over $K$
 and a map of $\Q_p$-algebras $\iota\colon F\rightarrow\End(N,\phi,\Fil^\bullet N_K)$.
\end{dfn}

The filtered isocrystals over $K$ with coefficients in $F$ form
an $F$-linear quasi-abelian category $\Fil\Isoc_{K,F}$,
with morphisms given by maps of filtered isocrystals compatible with~$\iota$;
the notions of exactness, subobject and quotient object come from those in $\Fil\Isoc_K$
by neglecting $\iota$.

We have an exact forgetful functor:
\begin{align*}
 \Fil\Isoc_{K,F}    &\longrightarrow\Isoc(\k)_F \\
 (N,\phi,\Fil^\bullet N_K,\iota)    &\longmapsto(N,\phi,\iota),
\end{align*}
where $\iota$ also denotes the induced $F$-action on the underlying isocrystal $(N,\phi)$.
Given $(\NF,\iota)=(N,\phi,\Fil^\bullet N_K,\iota)\in\Fil\Isoc_{K,F}$,
say with $\height(N,\phi)=dn$, we define its \emph{Newton polygon} to be:
\[
 \Newt(\NF,\iota):=\Newt(N,\phi,\iota)\in\Q^n_+.
\]
Equivalently:
\[
 \Newt(\NF,\iota)\colon x\longmapsto\frac{1}{d}\Newt(\NF)(dx).
\]
Thus, the Newton polygon only depends on the underlying isocrystal.
Moreover, as we already know for isocrystals with coefficients in $F$,
the break points of $\Newt(\NF,\iota)$ lie in $\Z\times\frac{1}{d}\Z$
and we have $\Newt(\NF,\iota)(n)=-\dim(N,\phi)/d=-t_N(\NF)/d$.

\paragraph{The Hodge polygon.}
Let $(\NF,\iota)=(N,\phi,\Fil^\bullet N_K,\iota)$ be a filtered isocrystal over $K$
with coefficients in $F$ and pick a field extension $K'$ of $K$
containing all embeddings $\tau$ of $F$ in an algebraic closure of $K$.
Then, $\iota$ extends linearly to an action on the $K'$-vector-space $N_{K'}=N\otimes_{K_0}K'$,
which respects the induced filtration $\Fil^\bullet N_{K'}$.
We obtain decompositions of $K'$-vector-spaces:
\begin{equation}\label{egn-dcp-fic}
\begin{gathered}
 N_{K'}=\bigoplus_{\tau\colon F\rightarrow K'}N_\tau,
 \quad\text{with }N_\tau=\Set{v\in N_{K'}|\forall a\in F\colon\iota(a)(v)=\tau(a)v}; \\
 \Fil^\bullet N_{K'}=\bigoplus_{\tau\colon F\rightarrow K'}\Fil^\bullet N_\tau,
 \quad\text{with }\Fil^\bullet N_\tau=N_\tau\cap\Fil^\bullet N_{K'}.
\end{gathered}
\end{equation}
Now, as seen in Remark~\ref{rmk-ic-end}, $N$ is free as a module over $K_0\otimes_{\Q_p}F$,
so that $N_{K'}$ is free over $K'\otimes_{\Q_p}F\cong\prod_\tau K'$
($b\otimes a\mapsto (b\tau(a))_\tau$);
in particular, $\dim_{K'}N_\tau$ is constant over all embeddings $\tau$, say equal to $n$
(thus, $\height(N,\phi)=dn$).
Let $f_\tau\in\Q^n_+$ be the type of $(N_\tau,\Fil^\bullet N_\tau)\in\Fil\Vect_{K'|K'}$.
Then, we define the \emph{Hodge polygon} of $(\NF,\iota)$ to be:
\[
 \Hdg(\NF,\iota):=\frac{1}{d}\sum_\tau f_\tau\in\Q^n_+.
\]
This is independent of the choice of $K'$,
due to the invariance of the type of a filtration under base change.
In fact, $\Hdg(\NF,\iota)$ only depends on the underlying filtered vector space of $\NF$
and the compatible $F$-action on it;
however, we used that the same action respects the operator $\phi$ on $N$,
in order to ensure the equidimensionality of $N_\tau$ for varying $\tau$.
Finally, note that the break points of $\Hdg(\NF,\iota)$ lie in $\Z\times\frac{1}{d}\Z$
and we have $\Hdg(\NF,\iota)(n)=-t_H(\NF)/d$.

\subsection{Weakly admissible filtered isocrystals with coefficients}

A filtered isocrystal over $K$ with coefficients in $F$ is called \emph{weakly admissible}
if the underlying filtered isocrystal is weakly admissible.
We denote by $\Fil\Isoc_{K,F}^\wa$ the full subcategory of $\Fil\Isoc_{K,F}$
consisting of the weakly admissible objects.
This is an abelian category,
with kernels and cokernels coinciding with those in $\Fil\Isoc_{K,F}$.

Using Lemma~\ref{lem-fvs}, we can easily generalise
the fundamental inequality between the Newton polygon and the Hodge polygon to this setting.

\begin{prp}\label{Nwt<Hdg-fic-end}
 Let $(\NF,\iota)=(N,\phi,\Fil^\bullet N_K,\iota)$ be
 a weakly admissible filtered isocrystal over $K$ with coefficients in $F$.
 Then:
 \[
  \Newt(\NF,\iota)\le\Hdg(\NF,\iota).
 \]
\end{prp}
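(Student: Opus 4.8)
The plan is to imitate the proof of Proposition~\ref{Nwt<Hdg-fic}, running the estimate of Lemma~\ref{lem-fvs} separately in each of the $d$ isotypic components for the $F$-action and then averaging; apart from this averaging the coefficients play no role. First I would dispose of the end points: both $\Newt(\NF,\iota)$ and $\Hdg(\NF,\iota)$ start at $(0,0)$, and at $x=n$ they take the values $-t_N(\NF)/d$ and $-t_H(\NF)/d$ respectively, which agree because $(\NF,\iota)$ is weakly admissible, so the two polygons share their end point. It thus remains to show that $\Newt(\NF,\iota)$ lies below $\Hdg(\NF,\iota)$ on $[0,n]$, with respect to the partial order of $\Q^n_+$.

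Next I would note that $\Newt(\NF,\iota)$ is the concave envelope of the points $\bigl(\height(N',\phi')/d,\,-\dim(N',\phi')/d\bigr)$ taken over all $\iota$-stable sub-isocrystals $(N',\phi')$ of $(N,\phi)$: each such point lies on or below the rescaled polygon $x\mapsto\frac{1}{d}\Newt(N,\phi)(dx)=\Newt(\NF,\iota)(x)$ since $(\height(N',\phi'),-\dim(N',\phi'))$ lies below $\Newt(N,\phi)$, while conversely the break points of $\Newt(\NF,\iota)$ are attained by the partial sums of the slope decomposition~\eqref{slp-dcp} of $(N,\phi)$, which are $\iota$-stable by its functoriality. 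As $\Hdg(\NF,\iota)$ is concave, it then suffices to prove, for every $\iota$-stable sub-isocrystal $(N',\phi')$ of $(N,\phi)$, the inequality
\[
 -\dim(N',\phi')/d\le\Hdg(\NF,\iota)\bigl(\height(N',\phi')/d\bigr).
\]

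To this end I fix such an $(N',\phi')$. Equipped with the induced $F$-action it is an isocrystal over $\k$ with coefficients in $F$, hence by Remark~\ref{rmk-ic-end} its height equals $dn'$ for some $n'\in\N$; giving $N'$ the filtration induced from $N_K$ turns $(\NF',\iota')=(N',\phi',\Fil^\bullet N'_K,\iota')$ into a subobject of $(\NF,\iota)$ in $\Fil\Isoc_{K,F}$, with $-\dim(N',\phi')/d=-t_N(\NF')/d$. Since weak admissibility gives $t_H(\NF')\le t_N(\NF')$, it is enough to prove $-t_H(\NF')/d\le\Hdg(\NF,\iota)(n')$. Passing to the field $K'$ from the definition of the Hodge polygon, the decomposition~\eqref{egn-dcp-fic} for $N$ restricts to a compatible decomposition $N'_{K'}=\bigoplus_\tau N'_\tau$ with $\Fil^\bullet N'_\tau=N'_\tau\cap\Fil^\bullet N_\tau$, and $\phi$-stability of $N'$ forces $\dim_{K'}N'_\tau=n'$ for every embedding $\tau\colon F\rightarrow K'$, each $(N'_\tau,\Fil^\bullet N'_\tau)$ being a subobject of $(N_\tau,\Fil^\bullet N_\tau)$ in $\Fil\Vect_{K'|K'}$. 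Lemma~\ref{lem-fvs} applied in each component yields $-\deg(N'_\tau,\Fil^\bullet N'_\tau)\le f_\tau(n')$; summing over the $d$ embeddings $\tau$, using that the degree is additive along~\eqref{egn-dcp-fic} and invariant under the base change $K\rightarrow K'$, and dividing by $d$, I obtain
\[
 -\frac{t_H(\NF')}{d}=-\frac{1}{d}\sum_\tau\deg(N'_\tau,\Fil^\bullet N'_\tau)\le\frac{1}{d}\sum_\tau f_\tau(n')=\Hdg(\NF,\iota)(n'),
\]
which is exactly what was needed. Combined with the end point computation, this gives $\Newt(\NF,\iota)\le\Hdg(\NF,\iota)$.

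I do not expect a genuine obstacle: once Lemma~\ref{lem-fvs} and Remark~\ref{rmk-ic-end} are in hand, the argument is an assembly of facts already established, which is why the statement was described above as an easy consequence of Lemma~\ref{lem-fvs}. The only delicate points are the bookkeeping around the rescaling by $d$ and the routine verifications that the inclusions $N'\hookrightarrow N$ and $N'_\tau\hookrightarrow N_\tau$, each taken with the induced filtration, are strict, so that they are genuine subobjects and both weak admissibility and Lemma~\ref{lem-fvs} really do apply.
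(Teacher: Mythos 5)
Your proof is correct and follows essentially the same path as the paper's: reduce to the end-point equality (using weak admissibility), then for an $\iota$-stable sub-isocrystal $(N',\phi')$ give it the induced filtration, apply weak admissibility to bound $-t_N$ by $-t_H$, decompose over $K'$ into the $\tau$-isotypic pieces, and invoke Lemma~\ref{lem-fvs} in each piece before summing and dividing by $d$. The only cosmetic difference is that you phrase the reduction via the description of $\Newt(\NF,\iota)$ as a concave envelope over all $\iota$-stable sub-isocrystals, whereas the paper fixes a break point and produces the corresponding sub-isocrystal directly; the two are equivalent by concavity of $\Hdg(\NF,\iota)$.
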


\begin{proof}
 Let $n:=\height(N,\phi)/d$ and note first that:
 \[
  \Newt(\NF,\iota)(n)=-t_N(\NF)/d=-t_H(\NF)/d=\Hdg(\NF,\iota)(n),
 \]
 the central equality due to weak admissibility; thus, the two polygons share their end point.
 By concavity, it is then enough to show that
 each break point of $\Newt(\NF,\iota)$ lies below $\Hdg(\NF,\iota)$.
 
 Let $(x,y)$ be a break point of $\Newt(\NF,\iota)$;
 by definition, we find an $F$-stable sub-isocrystal $(N'\!,\phi')$ of $(N,\phi)$ such that
 $x=\height(N'\!,\phi')/d$ and $y=-\dim(N'\!,\phi')/d$.
 Let $\Fil^\bullet N'_K:=N'_K\cap\Fil^\bullet N_K$ be the induced filtration,
 so that $\NF':=(N'\!,\phi'\!,\Fil^\bullet N'_K)$ is a sub-filtered-isocrystal of $\NF$;
 moreover, $\iota$ restricts to an $F$-action $\iota'$ on $\NF'$.
 
 Fix now a field extension $K'$ of $K$
 containing all embeddings $\tau$ of $F$ in an algebraic closure of $K$ and let:
 \begin{gather*}
   N_{K'}=\bigoplus_{\tau\colon F\rightarrow K'}N_\tau,
   \qquad\Fil^\bullet N_{K'}=\bigoplus_{\tau\colon F\rightarrow K'}\Fil^\bullet N_\tau, \\
   N'_{K'}=\bigoplus_{\tau\colon F\rightarrow K'}N'_\tau,
   \qquad\Fil^\bullet N'_{K'}=\bigoplus_{\tau\colon F\rightarrow K'}\Fil^\bullet N'_\tau
 \end{gather*}
 be the decompositions as in \eqref{egn-dcp-fic}.
 Note that $N'_\tau=N'_{K'}\cap N_\tau$
 and $\Fil^\bullet N'_\tau=N'_\tau\cap\Fil^\bullet N_\tau$,
 so $(N'_\tau,\Fil^\bullet N'_\tau)$ is a subobject
 of $(N_\tau,\Fil^\bullet N_\tau)$ in $\Fil\Vect_{K'|K'}$, with $\dim_{K'}N'_\tau=x$.
 By Lemma~\ref{lem-fvs}, then:
 \[
  -\deg(N'_\tau,\Fil^\bullet N'_\tau)\le f_\tau(x)
 \]
 for every $\tau$, where $f_\tau$ is the type of $(N_\tau,\Fil^\bullet N_\tau)$.
 On the other hand, weak admissibility gives:
 \[
  dy=-\dim(N'\!,\phi')=-t_N(\NF')\le-t_H(\NF')=-\deg(N'\!,\Fil^\bullet N'_K).
 \]
 Finally, since $\deg(N'\!,\Fil^\bullet N'_K)=\sum_\tau\deg(N'_\tau,\Fil^\bullet N'_\tau)$,
 we get:
 \[
  y\le\frac{1}{d}\sum_\tau f_\tau(x)=\Hdg(\NF,\iota)(x),
 \]
 which means exactly that the point $(x,y)$ lies below the polygon $\Hdg(\NF,\iota)$.
\end{proof}

\begin{rmk}\label{rmk-Nwt<Hdg-fic-end}
 If $K_0=\breve{\Q}_p$ is the completion of the maximal unramified extension of $\Q_p$,
 then the inequality proved above is an instance of the more general fact that
 the $p$-adic period domain associated to a pair $(b,\{\mi\})$ for a reductive group
 $G$ over $\Q_p$ is nonempty if and only if the $\sigma$-conjugacy class $[b]$ is
 ``acceptable'' with respect to $\{\mi\}$ (cf.\ \cite[3.1]{RV}).
 Here, the group $G$ is given by the restriction of scalars $\Res_{F|\Q_p}\GL(V)$,
 where $V$ is any $F$-vector-space such that
 $V\otimes_{\Q_p}K_0$ identifies with the $K_0\otimes_{\Q_p}F$-module~$N$.
 Then, $\phi$ corresponds to an element $b\in G(K_0)$ and $\{\mi\}$ is the conjugacy class of
 any geometric cocharacter of $G$ whose induced grading on $N_{K'}$ splits the filtration
 $\Fil^\bullet N_{K'}$, for a suitable field extension $K'$ of $K$.
 In this setup, the filtration $\Fil^\bullet N_K$ determines a $K$-valued point of
 the $p$-adic period domain and $[b]$ being acceptable with respect to $\{\mi\}$
 translates into the inequality stated in the proposition.
\end{rmk}

\paragraph{The Harder-Narasimhan polygon.}
Let $(\NF,\iota)=(N,\phi,\Fil^\bullet N_K,\iota)$ be a weakly admissible
filtered isocrystal over $K$ with coefficients in $F$, say with $\height(N,\phi)=h=dn$.
By functoriality of the Harder-Narasimhan filtration~\eqref{HN-fil-fic} of $\NF$,
the action of $F$ via $\iota$ restricts to each piece of this filtration,
forcing the height of the respective underlying isocrystal to be a multiple of $d$.
Thus, in the Harder-Narasimhan polygon of $\NF$,
each entry is repeated a multiple of $d$ times.
In light of this, if $\HN(\NF)=(\mi_1^{(h_1)},\dots,\mi_m^{(h_m)})\in\Q^h_+$,
we define the \emph{Harder-Narasimhan polygon} of $(\NF,\iota)$ to be:
\[
 \HN(\NF,\iota):=(\mi_1^{(h_1/d)},\dots,\mi_m^{(h_m/d)})\in\Q^n_+.
\]
Equivalently:
\[
 \HN(\NF,\iota)\colon x\longmapsto\frac{1}{d}\HN(\NF)(dx).
\]
Note that this polygon is the concave envelope of the points
$(\height(N'\!,\phi')/d,-t_N(\NF')/d)$ over all subobjects
$(\NF'\!,\iota')=(N'\!,\phi'\!,\Fil^\bullet N'_K,\iota')$ of $(\NF,\iota)$
in $\Fil\Isoc_{K,F}^\wa$.
Moreover, its break points lie in $\Z\times\frac{1}{d}\Z$
and we have $\HN(\NF,\iota)(n)=-t_N(\NF)/d=-t_H(\NF)/d$.

\bigskip\noindent
Because both the Newton polygon and the Harder-Narasimhan polygon of
a weakly admissible filtered isocrystal with coefficients in $F$ are a rescaled version
(by the same factor $1/d$) of their counterparts obtained neglecting the action of $F$,
Proposition~\ref{HN<Nwt-fic} implies directly the same inequality in this setting.

\begin{prp}\label{HN<Nwt-fic-end}
 Let $(\NF,\iota)=(N,\phi,\Fil^\bullet N_K,\iota)$ be
 a weakly admissible filtered isocrystal over $K$ with coefficients in $F$. Then:
 \[
  \HN(\NF,\iota)\le\Newt(\NF,\iota).
 \]
\end{prp}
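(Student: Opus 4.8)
The plan is to reduce the statement directly to Proposition~\ref{HN<Nwt-fic}, exploiting the fact that both polygons appearing here are obtained from their ``coefficient-free'' counterparts by the very same rescaling operation. Concretely, let $\NF=(N,\phi,\Fil^\bullet N_K)$ be the underlying weakly admissible filtered isocrystal over $K$, with $\height(N,\phi)=h=dn$. By the definitions recalled just before the statement, for every $x\in[0,n]$ one has $\HN(\NF,\iota)(x)=\tfrac1d\HN(\NF)(dx)$ and $\Newt(\NF,\iota)(x)=\tfrac1d\Newt(\NF)(dx)$. Proposition~\ref{HN<Nwt-fic} gives $\HN(\NF)\le\Newt(\NF)$ in $\Q^h_+$, i.e.\ $\HN(\NF)(t)\le\Newt(\NF)(t)$ for all $t\in[0,h]$, with equality at $t=h$ (both sides equal $-t_N(\NF)=-t_H(\NF)$ by weak admissibility). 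Substituting $t=dx$ and dividing by $d$ yields $\HN(\NF,\iota)(x)\le\Newt(\NF,\iota)(x)$ for all $x\in[0,n]$, with equality at $x=n$, which is precisely the asserted inequality in $\Q^n_+$.

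The only point that genuinely needs to be checked is that the rescaling is well defined on both sides, that is, that the heights $h_i$ of the graded pieces of the Harder-Narasimhan filtration of $\NF$, as well as the multiplicities $h_i$ of the Newton slopes of $(N,\phi)$, are all divisible by $d$; otherwise $\HN(\NF,\iota)$ and $\Newt(\NF,\iota)$ would not be honest elements of $\Q^n_+$. But this is exactly what was established in the paragraphs preceding the two objects: functoriality of the slope decomposition~\eqref{slp-dcp} forces each isotypical component of $(N,\phi)$ to be stable under $\iota$, hence its height lies in $d\N$ by Remark~\ref{rmk-ic-end}; likewise, functoriality of the Harder-Narasimhan filtration~\eqref{HN-fil-fic} forces each $\NF_i$ (and each graded piece $\NF_i/\NF_{i-1}$) to be $\iota$-stable, so again its height is a multiple of $d$. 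Thus both rescaled polygons are legitimately defined, and the comparison above makes sense entrywise.

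There is essentially no obstacle here: the content of the proposition is entirely carried by Proposition~\ref{HN<Nwt-fic}, and the passage to coefficients is a purely formal rescaling by the fixed factor $1/d$, which manifestly preserves the partial order on Newton sets (it sends a polygon on $[0,h]$ lying below another and sharing its endpoint to a polygon on $[0,n]$ with the same two properties). If one prefers an entirely coordinate-free phrasing, one can note that the map $\Q^h_+\to\Q^n_+$, $P\mapsto\bigl(x\mapsto\tfrac1d P(dx)\bigr)$, is order-preserving on the sub-poset of polygons whose slope multiplicities are divisible by $d$, and apply it to the inequality $\HN(\NF)\le\Newt(\NF)$.
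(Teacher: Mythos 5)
Your proof is correct and coincides with the paper's argument: the text preceding the proposition observes that both $\HN(\NF,\iota)$ and $\Newt(\NF,\iota)$ are the same $1/d$-rescaling of their coefficient-free counterparts and deduces the inequality directly from Proposition~\ref{HN<Nwt-fic}. Your additional check that the rescaling is well defined (heights divisible by $d$ via functoriality and Remark~\ref{rmk-ic-end}) is exactly the content the paper covers in the paragraphs defining the two rescaled polygons.
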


\clearpage

\section{\texorpdfstring{$p$}{p}-divisible groups}
\thispagestyle{plain}

Let $R$ be a complete Noetherian commutative local ring,
with residue field of characteristic~$p$.
We denote by $\pdiv_R$ the category of $p$-divisible groups,
alias Barsotti-Tate groups, over $\Spec R$ (or ``over $R$'' for short);
we write $\height H$ for the height and
$H^\vee$ for the Cartier dual of any object $H\in\pdiv_R$ (as defined in \cite[\S 2.3]{T}).
This is a $\Z_p$-linear category, with the hom-sets being torsion free $\Z_p$-modules.
Let then $\pdiv_R\otimes\Q_p$ denote the $\Q_p$-linear category of $p$-divisible groups
over $R$ ``up to isogeny'': objects are again $p$-divisible groups over $R$,
but for $H_1,H_2$ two of them we set
$\Hom_{\pdiv_R\otimes\Q_p}(H_1,H_2):=\Hom_{\pdiv_R}(H_1,H_2)\otimes_{\Z_p}\Q_p
=\Hom_{\pdiv_R}(H_1,H_2)[\frac{1}{p}]$ and extend the composition maps by linearity.
We have an obvious functor $\pdiv_R\rightarrow\pdiv_R\otimes\Q_p$.

Recall that every $p$-divisible group $H=(H[p^i])_{i\ge1}$ over $R$,
say with $H[p^i]=\Spec A_i$,
can be viewed as a formal group $\Spf A$ over $R$, where $A=\varprojlim_{i\ge1}A_i$.
If $H$ is connected, then $\Spf A$ is a formal Lie group, i.e.\ $A$
is isomorphic to the profinite $R$-algebra of formal power series $R[[X_1,\dots,X_r]]$,
for some $r\ge1$ called the \emph{dimension} of $\Spf A$ (cf.\ \cite[\S 2.2]{T}).
In general, there exists a unique exact sequence of $p$-divisible groups over $R$:
\begin{equation}\label{xct-con-et}
 0\longrightarrow H^\circ\longrightarrow H\longrightarrow H^\et\longrightarrow0
\end{equation}
where $H^\circ$ is connected and $H^\et$ is étale (cf.\ loc.\ cit.).
The \emph{dimension} of $H$, denoted by $\dim H$,
is by definition the dimension of the formal Lie group corresponding to $H^\circ$;
we have $\dim H+\dim H^\vee=\height H$ (loc.\ cit.\ Proposition~3).
Next, let $I\subseteq A$ be the augmentation ideal and set $\omega_H:=I/I^2$.
Then, $\omega_H=\omega_{H^\circ}$;
in particular, this is a free $R$-module of rank $\dim H$.
On the other hand,
set $\Lie(H):=\Ker((\Spf A)(R[\epsilon]/(\epsilon^2))\rightarrow(\Spf A)(R))$,
the map being induced by $\epsilon\mapsto0$;
this is again a free $R$-module of rank $\dim H$,
which in fact identifies naturally with the dual of $\omega_H$.
The constructions of $\omega_H$ and $\Lie(H)$ define additive functors to
the category of finitely generated $R$-modules and are compatible with base change along
maps of local rings (that is, ring homomorphisms respecting the maximal ideal).
Finally, note that if $p$ is nilpotent in $R$,
then $H^\circ$ is the same as the formal completion of $H$ along the identity section,
as considered in \cite[\S II]{Me} (this follows from loc.\ cit.\ 4.4, 4.7, 4.11).

Let us next analyse some specific cases of our setup,
namely when $R=\k$ is a perfect field of characteristic $p$ or $R=\O_K$ is
the ring of integers of a finite totally ramified extension $K$ of $K_0=W(\k)[\frac{1}{p}]$.

\paragraph{$p$-divisible groups over $\k$.}
Recall that to each $p$-divisible group $H$ over $\k$ Dieudonné theory associates
a finite free $W(\k)$-module $\D(H)$ of rank $\height H$,
together with an injective $\sigma$-linear endomorphism $\phi_H\colon\D(H)\rightarrow\D(H)$
such that $\phi_H\D(H)\supseteq p\D(H)$.
This kind of objects are called \emph{Dieudonné modules} over $\k$;
they form a $\Z_p$-linear category, with morphisms given by
$W(\k)$-linear maps compatible with the $\sigma$-linear endomorphism.
The association above is in fact functorial and induces a $\Z_p$-linear equivalence of categories between $\pdiv_\k$ and Dieudonné modules over $\k$;
moreover, we have a natural identification of $\k$-vector-spaces:
\begin{equation}\label{Die-ctg}
 \D(H)/\phi_H\D(H)\cong\omega_{H^\vee}
\end{equation}
(cf.\ \cite[\S III]{Fo1}; we are considering here the covariant version of the equivalence,
which is obtained by composing the contravariant functor with duality).
Now, every Dieudonné module $(\D,\phi)$ gives rise to an isocrystal
$(N,\phi):=(\D\otimes_{W(\k)}K_0,\phi\otimes\id)$, whose Newton slopes lie in $[0,1]$.
Indeed, each isotypical component $(N_\lambda,\phi_\lambda)$ of $(N,\phi)$ contains
a $W(\k)$-lattice $M_\lambda:=\D\cap N_\lambda$ with the property that
$pM_\lambda\subseteq\phi_\lambda M_\lambda\subseteq M_\lambda$.
Here, the second inclusion implies that the slope $\lambda$ of $(N_\lambda,\phi_\lambda)$ is
nonnegative, while the first inclusion (which is equivalent to
$p\phi_\lambda^{-1}M_\lambda\subseteq M_\lambda$) implies that $\lambda\le1$.
In fact, the converse statement also holds, namely,
if $(N_\lambda,\phi_\lambda)$ is an isotypical isocrystal of slope $\lambda\in[0,1]$,
then it contains a $W(\k)$-lattice $M_\lambda\subseteq N_\lambda$ such that
$pM_\lambda\subseteq\phi_\lambda M_\lambda\subseteq M_\lambda$.
Indeed, by definition there exist integers $r,s$ and a $W(\k)$-lattice $M\subseteq N_\lambda$ 
such that $\phi_\lambda^sM=p^rM$, with $s>0$ and $r/s=\lambda\in[0,1]$, i.e.\ $0\le r\le s$.
Then, the $W(\k)$-lattice $M_\lambda:=\sum_{i=0}^{s-r}\phi_\lambda^iM+
\sum_{i=1}^{r-1}p^{-i}\phi_\lambda^{s-r+i}M\subseteq N_\lambda$ satisfies the requirements.
Thus, shifting the slopes by $-1$ (for normalisation reasons),
Dieudonné theory induces a $\Q_p$-linear equivalence of categories:
\begin{equation}\label{pdv-ic}
\begin{aligned}
 \pdiv_\k\otimes\Q_p   &\xlongrightarrow{\sim}\Isoc(\k)^{[-1,0]} \\
 H          &\longmapsto(\D(H)\otimes_{W(\k)}K_0,p^{-1}(\phi_H\otimes\id)),
\end{aligned}
\end{equation}
where $\Isoc(\k)^{[-1,0]}$ denotes the full subcategory of $\Isoc(\k)$
consisting of the isocrystals whose Newton slopes lie in $[-1,0]$,
an abelian subcategory of $\Isoc(\k)$.
Note that, if $(N,\phi)$ is the isocrystal associated to a $p$-divisible group $H$
via the functor above, then, taking $\k$-dimensions in \eqref{Die-ctg}, we get that
$\dim(N,\phi)=-\height H+\dim H^\vee=-\dim H $.

\paragraph{$p$-divisible groups over $\O_K$.}
Every $p$-divisible group $H$ over $\O_K$ gives rise to a natural exact sequence
of finite free $\O_K$-modules:
\begin{equation}\label{Hdg-fil}
 0\longrightarrow\omega_{H^\vee}\longrightarrow M(H)\longrightarrow\Lie(H)\longrightarrow0,
\end{equation}
where $M(H)$ is the Lie algebra of the universal extension of $H$
(cf.\ \cite[\S IV.1]{Me}; we obtain the sequence above by taking
the projective limit of all the similar sequences over $\O_K/p^r\O_K$, for $r\ge1$).
In addition, there is a natural isomorphism of $\O_K$-modules
$M(H)\cong\D(H_\k)\otimes_{W(\k)}\O_K$,
whose reduction to $\k$ identifies the image of $\omega_{H_\k^\vee}$ in 
$M(H)\otimes_{\O_K}\k$ with $V\D(H_\k)/p\D(H_\k)\subseteq\D(H_\k)/p\D(H_\k)$;
here, $H_\k$ denotes the reduction of $H$ to $\k$ and $V=p\phi_{H_\k}^{-1}$
the ``Verschiebung'' map of $\D(H_\k)$ (cf.\ \cite[II.15.3]{MM}, \cite[4.3.10]{BGM}).
Thus, if $(N,\phi)$ is the isocrystal over $\k$ associated to $H_\k$ as in \eqref{pdv-ic},
we get a filtered isocrystal $\NF:=(N,\phi,\Fil^\bullet N_K)$ over $K$ by setting
$\Fil^0 N_K$ to be the image of $\omega_{H^\vee}\otimes_{\O_K}K$ in
$N_K\cong M(H)\otimes_{\O_K}K$, with $\Fil^{-i}N_K=N_K$ and $\Fil^i N_K=0$ for all $i\ge1$;
let us check that this is weakly admissible.
First of all:
\begin{align*}
t_H(\NF) &=-(\dim_K N_K-\dim_K(\omega_{H^\vee}\otimes_{\O_K}K)) \\
 &=-\height H+\dim H^\vee=-\dim H=\dim(N,\phi)=t_N(\NF).
\end{align*}
Next, a subobject $\NF'\subseteq\NF$ is given by
a sub-isocrystal $(N'\!,\phi')\subseteq(N,\phi)$,
together with the induced filtration $\Fil^\bullet N'_K=N'_K\cap\Fil^\bullet N_K$.
Set $D':=N'\cap\D(H_\k)$, a direct summand of the free $W(\k)$-module $\D(H_\k)$, and
$E':=N'_K\cap\omega_{H^\vee}$, a direct summand of the free $\O_K$-module $\omega_{H^\vee}$.
Let then $\bar{D'}:=D'/pD'$ and $\bar{E'}:=E'\otimes_{\O_K}\k$.
We have a commutative diagram of inclusions:
\[
\begin{tikzcd}
\omega_{H_\k^\vee} \arrow{r}{\sim} & V\D(H_\k)/p\D(H_\k) \arrow{r}{} & \D(H_\k)/p\D(H_\k) \\
\bar{E'} \arrow{u}{} \arrow{rr}{} & {} & \bar{D'} \arrow{u}{},
\end{tikzcd}
\]
which shows that $\bar{E'}\subseteq VD'/pD'$ (as $VD'=D'\cap V\D(H_\k)$).
In addition, $V^{-1}$ induces an isomorphism
$VD'/pD'\cong D'/\phi_{H_\k}D'$ of $\k$-vector-spaces, so:
\begin{multline*}
 \dim_\k(\bar{E'})\le\dim_k(D'/\phi_{H_\k}D')=v_p(\det(\phi_{H_\k}|D'))=v_p(\det p\phi')= \\
  =\height(N'\!,\phi')+\dim(N'\!,\phi').
\end{multline*}
Then:
\begin{multline*}
 t_H(\NF')=-\dim_K N'_K+\dim_K(N'_K\cap\omega_{H^\vee}\otimes_{\O_K}K)
  =-\height(N'\!,\phi')+\dim_\k(\bar{E'})\le \\
  \le\dim(N'\!,\phi')=t_N(\NF').
\end{multline*}

We obtain a $\Q_p$-linear functor:
\begin{equation}\label{pdv-fic}
\begin{aligned}
 \pdiv_{\O_K}\otimes\Q_p    &\longrightarrow\Fil\Isoc_K^{\wa,[-1,0]} \\
 H  &\longmapsto(N,\phi,\Fil^\bullet N_K),
\end{aligned}
\end{equation}
where $\Fil\Isoc_K^{\wa,[-1,0]}$ denotes the full subcategory of $\Fil\Isoc_K^\wa$
consisting of those weakly admissible filtered isocrystals whose underlying filtration
has jumps in $\Set{-1,0}$, an abelian subcategory of $\Fil\Isoc_K^\wa$;
note that this restriction on the jumps implies that the Newton slopes of
the underlying isocrystal lie in $[-1,0]$, by Proposition~\ref{Nwt<Hdg-fic}
(the converse does not hold: for example, assume that $\k$ is algebraically closed and
consider the simple isotypical isocrystal of height $2$ and dimension $-1$
as in \cite[6.27]{Z1}, together with any filtration with jumps $-2$ and $1$).
The functor just defined is compatible with \eqref{pdv-ic},
if we reduce $p$-divisible groups along $\O_K\rightarrow\k$ on one side
and forget the filtration on the other.
Moreover, if $H\in\pdiv_{\O_K}\otimes\Q_p$ maps to the filtered isocrystal $\NF$,
we have $t_H(\NF)=t_N(\NF)=-\dim H$.

\begin{rmk}\label{rmk-pdv-fic}
 The functor~\eqref{pdv-fic} is in fact an equivalence of categories.
 This was first conjectured by Fontaine in \cite[\S 5.2]{Fo2},
 who already observed fully faithfulness.
 Later, in \cite{Fo3}, he introduced the category of ``crystalline'' $p$-adic representations
 $\Rep_{\cris}(G_K)$ of the absolute Galois group $G_K$ of $K$,
 together with a functor $D_{\cris}\colon\Rep_{\cris}(G_K)\rightarrow\Fil\Isoc_K^\wa$
 such that, for $H$ a $p$-divisible group over $\O_K$ and $V_p(H)$ its rational Tate module,
 $D_{\cris}(V_p(H))$ is the filtered isocrystal associated to $H^\vee$ as in \eqref{pdv-fic}
 (after shifting the filtration and the Newton slopes of the latter by $+1$).
 Now, a theorem of Colmez and Fontaine (cf.\ \cite{CF} and consider there the case $N=0$)
 ensures that every weakly admissible filtered isocrystal over $K$ is ``admissible'',
 meaning that it belongs to the essential image of $D_{\cris}$.
 In turn, a result of Kisin says that every crystalline representation $V$ of $G_K$
 with Hodge-Tate weights in $\Set{0,1}$ (which is equivalent to the underlying filtration
 of $D_{\cris}(V)$ having jumps in $\Set{0,1}$) is isomorphic to the rational Tate module
 of a $p$-divisible group over $\O_K$ (cf.\ \cite[2.2.6]{Ki}).
 It follows that \eqref{pdv-fic} is also essentially surjective.
\end{rmk}

\subsection{\texorpdfstring{$p$}{p}-divisible groups with endomorphism structure}

\begin{dfn}
 Let $R$ be a complete Noetherian commutative local ring,
 with residue field of characteristic $p$.
 A \emph{$p$-divisible group} over $R$ \emph{with endomorphism structure} for $\O_F$
 is a pair $(H,\iota)$ consisting of a $p$-divisible group $H$ over $R$
 and a map of $\Z_p$-algebras $\iota\colon\O_F\rightarrow\End(H)$.
\end{dfn}

Note that, because $\End(H)$ is a torsion free $\Z_p$-module,
the map $\iota$ is automatically injective;
in other words, the $\O_F$-action on $H$ corresponding to $\iota$ is faithful.
We denote by $\pdiv_{R,\O_F}$ the $\O_F$-linear category formed by the objects just defined,
with morphisms given by maps of $p$-divisible groups over $R$ compatible with $\iota$
(or $\O_F$-\emph{equivariant}).
Let then $\pdiv_{R,\O_F}\otimes F$ be the $F$-linear category of $p$-divisible groups over $R$
with endomorphism structure for $\O_F$ ``up to equivariant isogeny'':
this is constructed, as before, from $\pdiv_{R,\O_F}$ by inverting $p$ in the homomorphisms.

The functors \eqref{pdv-ic} and~\eqref{pdv-fic} introduced above
upgrade to $F$-linear equivalences of categories:
\begin{equation}\label{pdv-ic-end}
 \pdiv_{\k,\O_F}\otimes F\xlongrightarrow{\sim}\Isoc(\k)_F^{[-1,0]},
\end{equation}
where $\Isoc(\k)_F^{[-1,0]}$ denotes the full subcategory of $\Isoc(\k)_F$
consisting of objects whose underlying isocrystal has Newton slopes in $[-1,0]$,
an abelian subcategory of $\Isoc(\k)_F$, and:
\begin{equation}\label{pdv-fic-end}
 \pdiv_{\O_K,\O_F}\otimes F\xlongrightarrow{\sim}\Fil\Isoc_{K,F}^{\wa,[-1,0]},
\end{equation}
where $\Fil\Isoc_{K,F}^{\wa,[-1,0]}$ denotes the full subcategory of $\Fil\Isoc_{K,F}^\wa$
consisting of objects whose underlying filtration
has jumps in $\Set{-1,0}$, an abelian subcategory of $\Fil\Isoc_{K,F}^\wa$.
The fact that these functors are equivalences of categories is a formal consequence of
\eqref{pdv-ic} and \eqref{pdv-fic} being themselves equivalences
(see Remark~\ref{rmk-pdv-fic} for the latter),
apart from a small consideration about essential surjectivity.
Namely, an object of the right-hand side category corresponds, by what we formally know,
to some $p$-divisible group $H$ together with
a map $\iota\colon F\rightarrow\End(H)\otimes_{\Z_p}\Q_p$ of $\Q_p$-algebras.
Now, since $\End(H)$ is a finitely generated $\Z_p$-module, e.g.\ because we know that
the hom-sets of (filtered) isocrystals are finite-dimensional $\Q_p$-vector-spaces,
we have that $p^r\iota(\O_F)\subseteq\End(H)$ for $r\ge0$ sufficiently large.
Then, $(H,p^r\iota)$ belongs to the left-hand side category and
maps to some object which is isomorphic (via $p^{-r}$) to our target.

The two functors \eqref{pdv-ic-end} and~\eqref{pdv-fic-end} are compatible with respect to
the reduction of $p$-divisible groups along $\O_K\rightarrow\k$ on one side and
forgetting the filtration on the other.

\paragraph{The polygons.}
For $(H,\iota)$ a $p$-divisible group over $\O_K$ with endomorphism structure for $\O_F$,
we define its \emph{Newton polygon}, its \emph{Hodge polygon} and its
\emph{Harder-Narasimhan polygon} through the functor~\eqref{pdv-fic-end},
to be those of the corresponding
weakly admissible filtered isocrystal $(\NF,\iota)$ over $K$ with coefficients in $F$;
they live in $\Q^n_+$, where $dn=\height H$ is also the height of the underlying isocrystal
of $\NF$, a multiple of $d$ by Remark~\ref{rmk-ic-end}.
By definition, all these are invariants up to $\O_F$-equivariant isogeny.
Moreover, tracking the normalisation of the above-mentioned functor,
we see that $\Newt(H,\iota)(n)=\Hdg(H,\iota)(n)=\HN(H,\iota)(n)=\dim H/d$.
Finally, note that $\Newt(H,\iota)$ only depends on the reduction $(H_\k,\iota)$ of
$(H,\iota)$ to $\k$ (denoting again by $\iota$ the induced $\O_F$-action on $H_\k$).

\begin{rmk}\label{rmk-Hdg-unr}
 If $F$ is unramified over $\Q_p$, the Hodge polygon
 is classically defined at the level of $p$-divisible groups over $\k$ as follows.
 Assume for simplicity that $\k$ is algebraically closed and fix an embedding $F\subseteq K_0$;
 note that we have an isomorphism of $W(\k)$-algebras:
 \[
  W(\k)\otimes_{\Z_p}\O_F\cong\prod_{i\in\Z/d\Z}W(\k),\quad b\otimes a\mapsto(b\sigma^i(a))_i.
 \]
 Now, for $(H,\iota)\in\pdiv_{\k,\O_F}$, say with $\height H=dn$,
 the Dieudonné module $(\D,\phi)$ of $H$ inherits a $\Z_p$-linear $\O_F$-action from $\iota$,
 which makes $\D$ a module over the above ring.
 We obtain a decomposition of $W(\k)$-modules:
 \[
  \D=\bigoplus_{i\in\Z/d\Z}\D_i, \quad\text{with }
  \D_i=\Set{v\in\D|\forall a\in\O_F\colon\iota(a)(v)=\sigma^i(a)v}.
 \]
 Moreover, $\phi$ restricts to $\sigma$-linear injective maps
 $\phi\colon\D_i\rightarrow\D_{i+1}$, so that $\rk_{W(\k)}\D_i=n$ for all $i\in\Z/d\Z$.
 Let then $r_i:=\dim_\k\D_i/\phi\D_{i-1}$ and $f_i:=(1^{(n-r_i)},0^{(r_i)})\in\Q^n_+$ and set:
 \[
  \Hdg(H,\iota):=\frac{1}{d}\sum_{i\in\Z/d\Z} f_i\in\Q^n_+.
 \]
 This definition agrees with the one for $p$-divisible groups over $\O_K$, in the sense that
 for $(H,\iota)\in\pdiv_{\O_K,\O_F}$ we have $\Hdg(H,\iota)=\Hdg(H_{\bar{\k}},\iota)$,
 where $(H_{\bar{\k}},\iota)$ is the reduction of $(H,\iota)$
 to an algebraic closure $\bar{\k}$ of $\k$.
 Indeed, let $K'$ be a finite field extension of $K$ containing $F$ and fix $F\subseteq K'$.
 The $\O_{K'}$-module $\omega_{H^\vee\!,\O_{K'}}:=\omega_{H^\vee}\otimes_{\O_K}\O_{K'}$
 has a $\Z_p$-linear $\O_F$-action induced by $\iota$ and,
 since $\O_{K'}\otimes_{\Z_p}\O_F\cong\prod_{i\in\Z/d\Z}\O_{K'}$ (similarly to before),
 we obtain a decomposition of $\O_{K'}$-modules:
 \[
  \omega_{H^\vee\!,\O_{K'}}=\bigoplus_{i\in\Z/d\Z}\omega_{H^\vee\!,i}, \quad\text{with }
  \omega_{H^\vee\!,i}=
  \Set{v\in\omega_{H^\vee\!,\O_{K'}}|\forall a\in\O_F\colon\iota(a)(v)=\sigma^i(a)v},
 \]
 where $\sigma$ denotes, without ambiguity, the Frobenius automorphism of $F$ over $\Q_p$.
 Now, on the one hand $\omega_{H^\vee\!,i}\otimes_{\O_{K'}}\bar{\k}
 \cong\D(H_{\bar{\k}})_i/\phi_{H_{\bar{\k}}}\D(H_{\bar{\k}})_{i-1}$
 via $\omega_{H^\vee\!,\O_{K'}}\otimes_{\O_{K'}}\bar{\k}=\omega_{H^\vee_{\bar{\k}}}$
 and the natural identification~\eqref{Die-ctg}.
 On the other hand, if $(N,\phi,\Fil^\bullet N_K,\iota)$ is the filtered isocrystal
 with coefficients in $F$  associated to $(H,\iota)$,
 we see that $\omega_{H^\vee\!,i}\otimes_{\O_{K'}}K'\cong\Fil^0 N_{\sigma^i}$,
 with notation as in \eqref{egn-dcp-fic}.
 Hence, for all $i\in\Z/d\Z$, we have that
 $\dim_{\bar{\k}}\D(H_{\bar{\k}})_i/\phi_{H_{\bar{\k}}}\D(H_{\bar{\k}})_{i-1}
 =\dim_{K'}\Fil^0 N_{\sigma^i}$, which yields the desired equality of polygons.
 
 In particular, this shows that if $F$ is unramified over $\Q_p$,
 then the Hodge polygon of a $p$-divisible group over $\O_K$
 with endomorphism structure for $\O_F$ only depends on its reduction $(H_\k,\iota)$ to $\k$.
 This is not true for a general extension $F|\Q_p$, as the following example illustrates.
\end{rmk}

\begin{ex}\label{ex-Hdg-ram}
 Let $\k=\bar{\F}_p$ be an algebraic closure of $\F_p$,
 so that $K_0=\breve{\Q}_p$ is the completion of the maximal unramified extension of $\Q_p$.
 Choose $\sqrt{p}$ a square root of $p$ and let $K=K_0(\sqrt{p})$.
 Let then $F=\Q_p(\pi)$, also with $\pi^2=p$.
 We have two embeddings $\tau_0\colon\pi\mapsto\sqrt{p}$ and
 $\tau_1\colon\pi\mapsto-\sqrt{p}$ of $F$ in $K$.
 
 Consider the Dieudonné module $(\D,\phi)$ over $\k$ given by $\D=W(\k)^2$ and
 the $\sigma$-linear endomorphism $\phi\colon e_1\mapsto e_2, e_2\mapsto pe_1$,
 where $e_1,e_2$ is the standard basis.
 We let $\pi$ act on $(\D,\phi)$ by $\pi\colon e_1\mapsto e_2, e_2\mapsto pe_1$;
 since $\pi^2$ acts as $p$, this defines a map of $\Z_p$-algebras
 $\iota\colon\O_F\rightarrow\End(\D,\phi)$.
 Set now $v_0:=\sqrt{p}e_1+e_2$ and $v_1:=-\sqrt{p}e_1+e_2$,
 elements of $\D_{\O_K}:=\D\otimes_{W(\k)}\O_K=\O_K^2$;
 note that these are eigenvectors for the operator $\pi$,
 with eigenvalue respectively $\sqrt{p}$ and $-\sqrt{p}$.
 In particular, the sub-$\O_K$-modules $L_0:=\O_K\cdot v_0$ and $L_1:=\O_K\cdot v_1$
 of $\D_{\O_K}$ are $\pi$-stable.
 Observe, in addition, that these submodules are direct summands of $\D_{\O_K}$.
 
 Define now a new Dieudonné module $(\D'\!,\phi'):=(\D,\phi)\oplus(\D,\phi)$ over $\k$
 and let $H'$ be the corresponding $p$-divisible group over $\k$.
 We endow $(\D'\!,\phi')$ with the diagonal $\O_F$-action induced by $\iota$;
 this reflects into a $\Z_p$-linear $\O_F$-action on $H'$ (which we denote again by $\iota$),
 making $(H'\!,\iota)$ a $p$-divisible group with endomorphism structure for $\O_F$.
 Consider, for $i=0,1$, the filtration
 $L_{0,i}\subseteq\D'_{\O_K}:=\D'\otimes_{W(\k)}\O_K=\D_{\O_K}\oplus\D_{\O_K}$
 given by $L_0$ in the first factor and $L_i$ in the second factor;
 we have:
 \[
  L_{0,i}\otimes_{\O_K}\k=\k\cdot e_2\oplus\k\cdot e_2=V\D/p\D\oplus V\D/p\D=V'\D'/p\D',
 \]
 where $V=p\phi^{-1}$, $V'=p\phi'^{-1}$.
 Then, by Grothendieck-Messing theory, there exist
 $p$-divisible groups $H_0$, $H_1$ over $\O_K$ which reduce to $H'$ via $\O_K\rightarrow\k$
 and whose corresponding exact sequence as in \eqref{Hdg-fil}
 is given by the filtration $L_{0,i}\subseteq\D'_{\O_K}$, for $i=0,1$ respectively
 (cf.\ \cite[V.1.6]{Me}; we apply the theory over all the rings $\O_K/p^r\O_K$, $r\ge1$;
 the resulting compatible families can be assembled to $p$-divisible groups over $\O_K$
 by \cite[II.4.16]{Me}).
 Since the $\O_F$-action on $\D'_{\O_K}$ respects these filtrations,
 $H_0$ and $H_1$ will carry a $\Z_p$-linear $\O_F$-action lifting $\iota$
 (and hence deserving the same name again), making $(H_0,\iota)$ and $(H_1,\iota)$
 two $p$-divisible groups over $\O_K$ with endomorphism structure for $\O_F$.
 Finally, although these two objects have the same reduction to $\k$, we see that:
 \[
  \Hdg(H_0,\iota)=(1/2,1/2)\in\Q^2_+, \quad\text{while }
  \Hdg(H_1,\iota)=(1,0)\in\Q^2_+.
 \]
 Indeed, setting $N:=\D\otimes_{W(\k)}K_0$, and taking $K'=K$ in \eqref{egn-dcp-fic},
 we have that $N_{\tau_0}=L_0\otimes_{\O_K}K$ and $N_{\tau_1}=L_1\otimes_{\O_K}K$.
\end{ex}

Back to the main discussion,
let us specialise Propositions \ref{Nwt<Hdg-fic-end} and~\ref{HN<Nwt-fic-end}
to the context of $p$-divisible groups and write the following cumulative statement.

\begin{prp}
 Let $(H,\iota)$ be a $p$-divisible group over $\O_K$ with endomorphism structure for $\O_F$.
 Then:
 \[
  \HN(H,\iota)\le\Newt(H,\iota)\le\Hdg(H,\iota).
 \]
\end{prp}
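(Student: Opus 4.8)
The plan is to transport both inequalities from the setting of weakly admissible filtered isocrystals with coefficients, via the equivalence~\eqref{pdv-fic-end}. First I would use that equivalence to attach to $(H,\iota)$ the weakly admissible filtered isocrystal $(\NF,\iota)=(N,\phi,\Fil^\bullet N_K,\iota)$ over $K$ with coefficients in $F$, with $\height(N,\phi)=dn=\height H$. By the way the three polygons of $(H,\iota)$ were introduced (in the paragraph ``The polygons'' above), we have $\HN(H,\iota)=\HN(\NF,\iota)$, $\Newt(H,\iota)=\Newt(\NF,\iota)$ and $\Hdg(H,\iota)=\Hdg(\NF,\iota)$, all elements of $\Q^n_+$. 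So it suffices to prove the two inequalities for $(\NF,\iota)$.

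Now the middle-to-right inequality $\Newt(\NF,\iota)\le\Hdg(\NF,\iota)$ is precisely Proposition~\ref{Nwt<Hdg-fic-end}, which applies because $(\NF,\iota)$ lies in $\Fil\Isoc_{K,F}^{\wa,[-1,0]}$ and is in particular weakly admissible. Likewise, the left-to-middle inequality $\HN(\NF,\iota)\le\Newt(\NF,\iota)$ is exactly Proposition~\ref{HN<Nwt-fic-end}. Since $\le$ is a partial order on $\Q^n_+$, chaining the two yields $\HN(\NF,\iota)\le\Newt(\NF,\iota)\le\Hdg(\NF,\iota)$, which is the assertion.

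There is essentially no obstacle: the statement is a direct specialisation, the only point worth recording being that weak admissibility of $(\NF,\iota)$ — the hypothesis of both cited propositions — is built into the target category of~\eqref{pdv-fic-end}, hence automatic. If one wished to avoid invoking the equivalence as such, one could instead apply the functor~\eqref{pdv-fic-end} (without needing essential surjectivity), observe directly that the image is weakly admissible, and then argue exactly as above.
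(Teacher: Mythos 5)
Your argument is exactly the one the paper intends: the three polygons of $(H,\iota)$ are defined through the functor~\eqref{pdv-fic-end}, so the inequalities are immediate specialisations of Propositions~\ref{Nwt<Hdg-fic-end} and~\ref{HN<Nwt-fic-end}. Correct and the same route as the paper.
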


\begin{rmk}
 If $F$ is unramified over $\Q_p$, then the inequality $\Newt(H,\iota)\le\Hdg(H,\iota)$
 is classically seen as a consequence of the generalised Mazur inequality (cf.\ \cite[4.2]{RR}).
 Indeed, as we saw in Remark~\ref{rmk-Hdg-unr}, the Hodge polygon of $(H,\iota)$
 is determined in this case by the ``relative position'' of the pair of $W(\bar{\k})$-lattices
 $(\D(H_{\bar{\k}}),\phi_{H_{\bar{\k}}}\D(H_{\bar{\k}}))$ in $\D(H_{\bar{\k}})[\frac{1}{p}]$.
 The two inequalities, however, are conceptually of a different nature.
 Indeed, the one stated here relates to the nonemptiness of
 $p$-adic period domains (cf.\ Remark~\ref{rmk-Nwt<Hdg-fic-end}),
 which are geometric objects living over (a finite extension of) $K_0$.
 Mazur's inequality, instead, relates to the nonemptiness of
 affine Deligne-Lusztig varieties (cf.\ \cite{KR}), which live over $\k$.
\end{rmk}

\subsection{\texorpdfstring{$p$}{p}-groups}

Let $R$ be a complete Noetherian commutative local ring, with residue field of characteristic~$p$.
We denote by $\pgr_R$ the category of finite flat group schemes of $p$-power order
over $\Spec R$ and call \emph{$p$-groups} over $R$ its objects.
For $X\in\pgr_R$, we write $X^\vee$ for its Cartier dual (as defined in \cite[\S 1.2]{T}) and
$\height X$ for the \emph{height} of $X$, i.e.\ the logarithm to base $p$ of its order;
the height function is additive on short exact sequences.
Moreover, if $X$ has height $h$, then $X=X[p^h]:=\Ker(p^h\colon X\rightarrow X)$,
that is, $X$ is $p^h$-torsion (cf.\ \cite[\S 1]{TO}).

Given a $p$-group $X=\Spec A$ over $R$, with augmentation ideal $I\subseteq A$,
set $\omega_X:=I/I^2$; this defines an additive functor
to the category of finitely generated $R$-modules, which is compatible with base change.
If $X$ is the kernel of an isogeny $H\rightarrow H'$ of $p$-divisible groups over $R$,
then we have an exact sequence of $R$-modules:
\begin{equation}\label{xct-ctg}
 \omega_{H'}\longrightarrow\omega_H\longrightarrow\omega_X\longrightarrow0.
\end{equation}
This follows essentially from the fact that, on the level of formal groups,
the isogeny $H\rightarrow H'$ is a ``topologically flat'' map
(cf.\ \cite[VII\textsubscript{B}, 1.3.1]{SGA3} and loc.\ cit.\ 2.4).

Assume now that $R$ is a discrete valuation ring.
For $X\in\pgr_R$, denote by $X_\eta$ its generic fibre,
that is, its base change to the field of fractions of $R$.
Every closed sub-group-scheme $Y$ of $X_\eta$ yields a closed sub-$p$-group $\bar{Y}$ of $X$,
through the operation of schematic closure in $X$ (cf.\ \cite[\S 2]{R}).
This induces a bijection between the closed sub-group-schemes of $X_\eta$ and
the closed sub-$p$-groups of $X$, preserving closed embeddings;
the inverse is given by taking the generic fibre.
Finally, if $X$ is a closed sub-$p$-group of another object $X'\in\pgr_R$,
then the schematic closure in $X$ coincides with that in $X'$.

\paragraph{The degree.}
Let $X$ be a $p$-group over $\O_K$.
Then, because $X$ is $p$-power torsion and the functor $\omega$ is additive,
$\omega_X$ is a finitely generated torsion $\O_K$-module.
Thus, we may define the \emph{degree} of $X$ to be:
\[
 \deg X:=\frac{1}{e}\length\omega_X\in\frac{1}{e}\Z,
\]
where $\length\omega_X$ means the length of $\omega_X$ as $\O_K$-module
(and remember that $e=[K:K_0]$).
Explicitely,
\[
 \text{if:}\quad\omega_X\cong\bigoplus_{i=1}^r\O_K/b_i\O_K,\qquad
 \text{then:}\quad\deg X=\sum_{i=1}^r v(b_i),
\]
where the $b_i$'s are elements of $\O_K$ and $v$ is the valuation of $K$, normalised at $v(p)=1$.
The degree function is additive on short exact sequences
and satisfies $\deg X+\deg X^\vee=\height X$ (cf.\ \cite[Lemme~4]{F1}).
Moreover, if $X'\xrightarrow{u}X\xrightarrow{v}X''$ is a sequence of $p$-groups over $\O_K$
such that $u$ is a closed embedding, $v\circ u=0$ and $v$ induces an isomorphism
$X_\eta/X'_\eta\xrightarrow{\sim}X''_\eta$ on the generic fibre,
then $\deg X\le\deg X'+\deg X''$, with equality if and only if the sequence:
\[
 0\longrightarrow X'\xlongrightarrow{u}X\xlongrightarrow{v}X''\rightarrow0
\]
is exact (cf.\ \cite[Corollaire~3]{F1}).

\paragraph{The Harder-Narasimhan polygon.}
As explained in \cite[\S 4]{F1}, the category of $p$-groups over $\O_K$
admits a Harder-Narasimhan formalism for the slope function $\mi=\deg/\height$.
More precisely, to each nontrivial object $X\in\pgr_{\O_K}$ we associate its \emph{slope}:
\[
 \mi(X):=\frac{\deg X}{\height X}\in\Q
\]
and say that $X$ is \emph{semi-stable} of slope $\mi(X)$ if
for every closed sub-$p$-group $X'\subseteq X$ we have $\mi(X')\le\mi(X)$.
In general, there exists a unique \emph{Harder-Narasimhan filtration}:
\begin{equation}\label{HN-fil-pgr}
 0=X_0\subsetneq X_1\subsetneq\dots\subsetneq X_m=X
\end{equation}
by closed sub-$p$-groups, such that each $X_i/X_{i-1}$ is semi-stable,
say of slope $\mi_i$, with $\mi_1>\dots>\mi_m$.
Some functoriality of this filtration follows from the fact that if $X'$ and $X''$
are two semi-stable objects with $\mi(X')>\mi(X'')$,
then there are no nontrivial morphisms $X'\rightarrow X''$.
Now, for $X$ as above, with Harder-Narasimhan filtration as in \eqref{HN-fil-pgr},
let $h:=\height X$ and let $h_i:=\height X_i/X_{i-1}$ for $i=1,\dots,m$.
Then, we define the \emph{Harder-Narasimhan polygon} of $X$ to be:
\[
 \HN(X):=(\mi_1^{(h_1)},\dots,\mi_m^{(h_m)})\in\Q^h_+.
\]
This polygon is the concave envelope of the points $(\height X',\deg X')$
over all closed sub-$p$-groups $X'$ of $X$.
Its break points lie in $\Z\times\frac{1}{e}\Z$ and we have $\HN(X)(h)=\deg X$.
Moreover, the following compatibility with respect to duality holds (cf.\ \cite[Corollaire~8]{F1}):
\begin{align*}
 &\HN(X^\vee)=((1-\mi_m)^{(h_m)},\dots,(1-\mi_1)^{(h_1)}), \qquad\text{or:} \\
 &\HN(X^\vee)\colon x\longmapsto x+\HN(X)(h-x)-\deg X.
\end{align*}

\subsection{\texorpdfstring{$p$}{p}-groups with endomorphism structure}

\begin{dfn}
 Let $R$ be a complete Noetherian commutative local ring,
 with residue field of characteristic $p$.
 A \emph{$p$-group} over $R$ \emph{with endomorphism structure} for $\O_F$
 is a pair $(X,\iota)$ consisting of a $p$-group $X$ over $R$
 and a map of rings $\iota\colon\O_F\rightarrow\End(H)$.
\end{dfn}

We denote by $\pgr_{R,\O_F}$ the category formed by the objects just defined,
with morphisms given by maps of $p$-groups over $R$ compatible with $\iota$
(or $\O_F$-\emph{equivariant}).

\paragraph{The Harder-Narasimhan polygon.}
Let $(X,\iota)$ be a $p$-group over $\O_K$ with endomorphism structure for $\O_F$,
say with $\height X=h$.
We define the \emph{Harder-Narasimhan polygon} of $(X,\iota)$ to be:
\begin{align*}
 \HN(X,\iota)\colon[0,h/d]  &\longrightarrow\R \\
 x  &\longmapsto\frac{1}{d}\HN(X)(dx).
\end{align*}
This is a concave polygon, whose break points lie in $\frac{1}{d}\Z\times\frac{1}{de}\Z$;
in particular, we are not talking about an element of the Newton set, in general.
Anyway, the $\O_F$-action on $X$ given by $\iota$ restricts to each piece of its
Harder-Narasimhan filtration~\eqref{HN-fil-pgr}, by functoriality of the latter
(see also \cite[\S 5.3]{F1});
this means that $\HN(X,\iota)$ is the concave envelope of the points
$(\height X'/d,\deg X'/d)$
over the closed sub-$p$-groups $X'$ of $X$ that are stable under $\iota$.
Finally, we have $\HN(X,\iota)(h/d)=\deg X/d$ and
the following compatibility with respect to duality:
\[
 \HN(X^\vee\!,\iota^\vee)\colon x\longmapsto x+\HN(X,\iota)(h/d-x)-\deg X/d,
\]
where $\iota^\vee$ denotes the $\O_F$-action induced by $\iota$ on $H^\vee$ through
functoriality of Cartier duality.

\paragraph{Truncated $p$-divisible groups.}
Let $(H,\iota)$ be a $p$-divisible group over $\O_K$ with endomorphism structure for $\O_F$,
say with $\height H=dn$.
For $i\ge1$, the $p$-groups $H[p^i]$ over $\O_K$ inherit a linear $\O_F$-action from $\iota$;
denoting this action again by $\iota$, we obtain a family $(H[p^i],\iota)_{i\ge1}$
of $p$-groups over $\O_K$ with endomorphism structure for $\O_F$.
We have $\height H[p^i]=idn$.
On the other hand, looking at the exact sequence~\eqref{xct-ctg} associated to
the isogeny $p^i\colon H\rightarrow H$, we see that $\omega_{H[p^i]}=\omega_H/p^i\omega_{H}$;
since $\omega_H$ a free $\O_K$-module of rank $\dim H$, we get $\deg H[p^i]=i\dim H$.
Now, for $i\ge1$, consider the \emph{renormalised Harder-Narasimhan polygons}:
\begin{align*}
 \HN^\r(H[p^i],\iota)\colon[0,n] &\longrightarrow\R \\
 x  &\longmapsto\frac{1}{i}\HN(H[p^i],\iota)(ix).
\end{align*}
These are concave polygons, whose break points lie in $\frac{1}{id}\Z\times\frac{1}{ide}\Z$.
Moreover, we have $\HN^\r(H[p^i],\iota)(n)=\dim H/d$ for every $i\ge1$.
The following result from \cite{F2} relates these polygons to the Harder-Narasimhan polygon of
$(H,\iota)$.

\begin{prp}\label{HN-lim}
 Let $(H,\iota)$ be a $p$-divisible group over $\O_K$ with endomorphism structure for $\O_F$
 and set $n:=\height H/d$.
 Then, for $i\to\infty$, the sequence of functions:
 \[
  \HN^\r(H[p^i],\iota)\colon[0,n]\rightarrow\R
 \]
 converges uniformly to the function:
 \[ 
  \HN(H,\iota)\colon[0,n]\rightarrow\R,
 \]
 which is equal to their infimum.
 In fact, for $i\ge1$ and $k\ge1$, we have:
 \[
  \HN^\r(H[p^{ki}],\iota)\le\HN^\r(H[p^i],\iota).
 \]
\end{prp}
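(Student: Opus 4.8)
The plan is to reduce immediately to the case $F=\Q_p$ and then to realise the statement — which for $d=1$ is due to Fargues \cite{F2} — from three ingredients: a self-contained monotonicity property, an easy one-sided comparison, and one genuinely hard input. For the reduction, note that by construction all three objects attached to $(H,\iota)$ arise from their counterparts for the underlying $p$-divisible group $H$ (with $d=1$) through the transformation $g\mapsto\bigl(x\mapsto\tfrac1d\,g(dx)\bigr)$: one has $\HN(H,\iota)(x)=\tfrac1d\HN(H)(dx)$ and $\HN^\r(H[p^i],\iota)(x)=\tfrac1{id}\HN(H[p^i])(idx)=\tfrac1d\HN^\r(H[p^i])(dx)$, where on the right $\HN(H)$ is the Harder-Narasimhan polygon of the weakly admissible filtered isocrystal attached to $H$ by \eqref{pdv-fic}, and $\HN(H[p^i])$ is that of the $p$-group $H[p^i]$. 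Since that transformation is a bijection of the ambient function spaces preserving the order $\le$, pointwise infima and uniform convergence, it suffices to prove the three assertions with $n=\height H$.

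\textbf{Monotonicity} ($\HN^\r(H[p^{ki}])\le\HN^\r(H[p^i])$). For $i,k\ge1$ the chain $0=H[p^0]\subseteq H[p^i]\subseteq\dots\subseteq H[p^{ki}]$ has successive quotients $H[p^{ji}]/H[p^{(j-1)i}]\cong H[p^i]$ (via $p^{(j-1)i}$). For a closed sub-$p$-group $X\subseteq H[p^{ki}]$, let $X_j$ be the schematic closure in $X$ of $X_\eta\cap H[p^{ji}]_\eta$; then $0=X_0\subseteq\dots\subseteq X_k=X$, and each $X_j/X_{j-1}$ is a $p$-group mapping to $H[p^i]$ by a morphism which is a closed immersion on generic fibres, say with schematic image $Y_j\subseteq H[p^i]$ and $h_j:=\height(X_j/X_{j-1})=\height Y_j\in[0,in]$. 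Since $\deg$ does not decrease under a morphism of $p$-groups over $\O_K$ that is an isomorphism on generic fibres (the case of zero subobject in \cite[Corollaire~3]{F1}), $\deg(X_j/X_{j-1})\le\deg Y_j\le\HN(H[p^i])(h_j)$. Summing, and using additivity of $\deg$ and $\height$ together with concavity of $\HN(H[p^i])$ (Jensen), $\deg X=\sum_j\deg(X_j/X_{j-1})\le\sum_j\HN(H[p^i])(h_j)\le k\,\HN(H[p^i])(\height X/k)$. As $t\mapsto k\,\HN(H[p^i])(t/k)$ is concave and lies above every point $(\height X,\deg X)$, it lies above their concave envelope $\HN(H[p^{ki}])$; renormalising gives the claim, and the case with $\iota$ follows by rescaling.

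\textbf{Lower bound} ($\HN(H)\le\HN^\r(H[p^i])$). Under \eqref{pdv-fic}, $\HN(H)$ is the concave envelope of the points $(\height H',\dim H')$ over sub-$p$-divisible groups up to isogeny $H'\subseteq H$, because the Harder-Narasimhan slope on $\Fil\Isoc_K^\wa$ is $-t_N/\height$ and $-t_N(\NF')=\dim H'$. For each such $H'$, the sub-$p$-group $H'[p^i]\subseteq H[p^i]$ has $(\height H'[p^i],\deg H'[p^i])=(i\,\height H',\,i\dim H')$ (the degree computed from $\omega_{H'[p^i]}=\omega_{H'}/p^i\omega_{H'}$), so it lies on or below $\HN(H[p^i])$; rescaling by $1/i$ and passing to concave envelopes yields $\HN(H)\le\HN^\r(H[p^i])$ for every $i$, hence $\HN(H)\le\inf_i\HN^\r(H[p^i])$.

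\textbf{The hard input and conclusion.} It remains to prove $\inf_i\HN^\r(H[p^i])\le\HN(H)$ together with the uniformity of the convergence; this is the real obstacle, and it is exactly where Fargues's algorithm from \cite[\S 3]{F2} is needed. The idea is that the sub-$p$-groups of $H[p^i]$ not arising from sub-$p$-divisible groups of $H$ produce a defect that is washed out after renormalisation: one shows that the Harder-Narasimhan filtrations of the $H[p^i]$ stabilise as $i\to\infty$ and that $\HN^\r(H[p^i])$ converges uniformly to $\HN(H)$. Combined with the monotonicity, this forces $\HN(H)=\inf_i\HN^\r(H[p^i])$, and transporting everything back through the $1/d$-rescaling of the first paragraph establishes the proposition in full. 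The reduction there makes plain that, for this statement, the possibly ramified endomorphism structure contributes nothing beyond bookkeeping: the content lives at $d=1$, where the first two steps are formal and the third is the genuine theorem of Fargues.
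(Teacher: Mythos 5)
Your proposal follows the same route as the paper: reduce to $F=\Q_p$ by the $1/d$-rescaling (which preserves order, pointwise infima and uniform convergence) and then appeal to Fargues \cite{F2} for the substance. The paper's proof consists of precisely that reduction plus three citations: Théorème~1 for uniform convergence, Théorème~5 for identifying the limit with the Harder-Narasimhan polygon of the filtered isocrystal via the equivalence~\eqref{pdv-fic}, and the proof of Proposition~2 for the monotonicity inequality. Your self-contained monotonicity argument --- the filtration $X_j=\overline{X_\eta\cap H[p^{ji}]_\eta}$ of a closed sub-$p$-group $X\subseteq H[p^{ki}]$, \cite[Corollaire~3]{F1}, and Jensen --- is correct and essentially reconstructs that cited proof. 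One imprecision worth flagging: in the ``lower bound'' paragraph you treat a subobject $H'$ of $H$ in $\pdiv_{\O_K}\otimes\Q_p$ (equivalently a subobject of $\NF$ in $\Fil\Isoc_K^{\wa}$) as if it were an honest sub-$p$-divisible group of $H$ over $\O_K$ and then form $H'[p^i]\subseteq H[p^i]$; whether the Harder-Narasimhan subobjects of $\NF$ admit such honest representatives inside $H$ is exactly the nontrivial issue later addressed by Proposition~\ref{sub-pdv}. The paragraph can be repaired (choose a genuine morphism $H_1\to H$ representing the subobject, take schematic images of $H_1[p^i]_\eta$ in $H[p^i]$, and use \cite[Corollaire~3]{F1} once more), but it is in any case redundant: once Théorème~1 and Théorème~5 are granted, the inequality $\HN(H)\le\HN^\r(H[p^i])$ already follows from your monotonicity statement by letting $k\to\infty$ in $\HN^\r(H[p^{ki}])\le\HN^\r(H[p^i])$.
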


\begin{proof}
 Note first that the polygons under consideration are just a rescaled version
 (by the same factor $1/d$) of their counterparts obtained neglecting the action of $\O_F$;
 since the rescaling process does not affect the properties stated here,
 we may indeed neglect the mentioned action.
 
 The convergence statement is \cite[Théorème~1]{F2}.
 Then, it follows from loc.\ cit.\ Théorème 5 that the limit coincides with
 the Harder-Narasimhan polygon of $H$ as it is defined here;
 indeed, the functor~\eqref{pdv-fic} is an equivalence of categories
 (cf.\ Remark~\ref{rmk-pdv-fic}),
 which makes the function $\dim/\height$ on $\pdiv_{\O_K}\otimes\Q_p$
 correspond to the slope function $\mi$ on $\Fil\Isoc_K^\wa$.
 For the final statement, see the proof of loc.\ cit.\ Proposition 2.
\end{proof}

\begin{rmk}\label{HN-vee-pdv}
 From the convergence statement and the compatibility of the Harder-Narasimhan polygon of
 $p$-groups with respect to duality, we deduce that similarly,
 for $(H,\iota)\in\pdiv_{\O_K,\O_F}$, we have:
 \[
  \HN(H^\vee\!,\iota^\vee)\colon x\longmapsto x+\HN(H,\iota)(\height H/d-x)-\dim H/d,
 \]
 where $\iota^\vee$ denotes the $\O_F$-action induced by $\iota$ on $H^\vee$ through
 functoriality of Cartier duality.
\end{rmk}

The family of polygons $(\HN^\r(H[p^i],\iota))_{i\ge1}$ concerns $(H,\iota)$
as an object of $\pdiv_{\O_K,\O_F}$ and not just up to isogeny; in this sense,
it provides more expendable information about the structure of our $p$-divisible group.
As we just saw, these polygons are bounded from below by $\HN(H,\iota)$,
which is instead determined up to isogeny.
We now come to finding an upper bound, similarly uniform over the whole isogeny class
(cf.\ Proposition~\ref{key}).

\subsection{Comparison between Harder-Narasimhan and Hodge polygon}\label{S-key}

Given a $p$-divisible group $(H,\iota)$ over $\O_K$ with endomorphism structure for $\O_F$,
we would like to compare the two polygons $\HN(H[p],\iota)$ and $\Hdg(H,\iota)$ and
ultimately prove Proposition~\ref{key}.
The way the endomorphism structure affects the definitions of
these two polygons is radically different,
namely a rescaling process for $\HN(H[p],\iota)$ and an averaging process for $\Hdg(H,\iota)$.
In order to address this discrepancy, we make use of different factors
depending on whether the endomorphism structure is of an unramified or a ramified nature.

Before proving the proposition, let us recall from \cite{XS} the main argument for the unramified case.
After that, we will review the algebra required to isolate the ramified component in the general case
and prove the key lemma that will allow us to deal with it.

\bigskip\noindent
Let $H\rightarrow H'$ be an isogeny of $p$-divisible groups over $\k$ with kernel $X\in\pgr_\k$,
consider the dual isogeny~$H'^{\,\vee}\rightarrow H^{\,\vee}$ and
let $\D(H'^{\,\vee})\rightarrow\D(H^\vee)$ be the corresponding map of Dieudonné modules.
Because this map induces an isomorphism on the isocrystals,
it is injective and its cokernel $C$ is a $W(\k)$-module of finite length.
We obtain an exact sequence of $W(\k)$-modules:
\begin{equation}\label{xct-Die}
 0\longrightarrow\D(H'^{\,\vee})\longrightarrow\D(H^\vee)\longrightarrow C\longrightarrow0.
\end{equation}
Here, in fact, $\D(H'^{\,\vee})$ and $\D(H^\vee)$ coincide with
the contravariant Dieudonné modules of $H'$ and $H$ respectively.
Hence, $C$ can be identified with the contravariant Dieudonné module of
the $p$-group~$X$ (cf.\ \cite[\S III.1]{Fo1});
in particular, we have that $\length_{W(\k)}C=\height X$ (loc.\ cit.\ Théorème~1(iii)).

Assume now that $H$ and $H'$ are endowed with endomorphism structure for $\O_F$
and that $H\rightarrow H'$ is $\O_F$-equivariant.
Then, the sequence above has an induced $\Z_p$-linear $\O_F$-action,
which makes it a sequence of $W(\k)\otimes_{\Z_p}\O_F$-modules.

If we assume further that $F$ is unramified over $\Q_p$ and admits an embedding into $K_0$,
then, fixing $F\subseteq K_0$, we have an isomorphism of $W(\k)$-algebras:
\begin{equation}\label{W-dcp-unr}
 W(\k)\otimes_{\Z_p}\O_F\cong\prod_{i\in\Z/d\Z}W(\k),\qquad b\otimes a\mapsto(b\sigma^i(a))_i.
\end{equation}
Thus, the sequence~\eqref{xct-Die} splits as a direct sum of exact sequences:
\begin{equation}\label{xct-Die-egn}
 0\longrightarrow\D(H'^{\,\vee})_i\longrightarrow\D(H^\vee)_i\longrightarrow C_i\longrightarrow0,
 \qquad i\in\Z/d\Z,
\end{equation}
with $\O_F$ acting through $\sigma^i\colon\O_F\rightarrow W(\k)$ on the $i$-th component.

The following lemma applies when these sequences arise from a situation over $\O_K$.
It is the core of the proof of \cite[3.10]{XS};
for convenience, we report it here as an isolated statement, together with its argument.
The example that comes after illustrates that the same conclusion might fail,
if we do not make sure that the isogeny in question possesses an $\O_F$-equivariant lift to $\O_K$.

\begin{lem}\label{key-unr}
 Assume that $F$ is unramified over $\Q_p$ and admits an embedding into $K_0$; fix $F\subseteq K_0$.
 Let $(H,\iota)$ and $(H'\!,\iota')$ be
 $p$-divisible groups over $\O_K$ with endomorphism structure for $\O_F$
 and $H\rightarrow H'$ an $\O_F$-equivariant isogeny with kernel $X$.
 Let then $H_\k\rightarrow H_\k'$ denote the reduction of the isogeny to $\k$ and
 consider the exact sequences of $W(\k)$-modules:
 \[
  0\longrightarrow\D(H_\k'^{\,\vee})_i\longrightarrow\D(H_\k^\vee)_i\longrightarrow
  C_i\longrightarrow0,\qquad i\in\Z/d\Z,
 \]
 as in \eqref{xct-Die-egn}.
 Then, $\length_{W(\k)}C_i=\frac{1}{d}\height X$ for every $i\in\Z/d\Z$.
\end{lem}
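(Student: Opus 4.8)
The plan is to read off $\length_{W(\k)}C_i$ from the Frobenius $\phi$ on the Dieudonné modules. Since $\phi$ cyclically permutes the $\O_F$-eigencomponents, it gives injections $\phi\colon\D(H_\k^\vee)_{i-1}\hookrightarrow\D(H_\k^\vee)_i$ whose cokernel on the $i$-th component is, by \eqref{Die-ctg} applied to the $p$-divisible group $H^\vee$ (and reduced to $\k$), the component $\omega_{H_\k,i}$; the same holds for $H'$. One can thus hope to transport length from the $(i-1)$-st to the $i$-th component of $C$, the discrepancy being controlled by a Hodge-type datum which turns out to be balanced precisely because the isogeny is defined over $\O_K$. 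Since $\sum_{i\in\Z/d\Z}\length_{W(\k)}C_i=\length_{W(\k)}C=\height X$, it suffices to prove that $\length_{W(\k)}C_i=\length_{W(\k)}C_{i-1}$ for every $i\in\Z/d\Z$.

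To make this precise I would reduce the $\O_F$-equivariant isogeny $H\to H'$ to $\k$ and, for each $i\in\Z/d\Z$, consider the induced morphism of short exact sequences of $W(\k)$-modules
\[
\begin{CD}
0 @>>> \D(H_\k'^{\,\vee})_{i-1} @>\phi>> \D(H_\k'^{\,\vee})_i @>>> \omega_{H'_\k,i} @>>> 0\\
@. @VVV @VVV @VV{\gamma_i}V @.\\
0 @>>> \D(H_\k^\vee)_{i-1} @>\phi>> \D(H_\k^\vee)_i @>>> \omega_{H_\k,i} @>>> 0
\end{CD}
\]
in which the rows are the $i$-th eigencomponents of the sequences coming from \eqref{Die-ctg} (so $\phi$ is injective there) and the first two vertical maps are the injections of \eqref{xct-Die-egn}, with cokernels $C_{i-1}$ and $C_i$ respectively. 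The snake lemma then yields an exact sequence $0\to\Ker\gamma_i\to C_{i-1}\to C_i\to\operatorname{coker}\gamma_i\to 0$, whence $\length_{W(\k)}C_i-\length_{W(\k)}C_{i-1}=\length_{W(\k)}\operatorname{coker}\gamma_i-\length_{W(\k)}\Ker\gamma_i$. Because $\gamma_i$ is $\k$-linear, the right-hand side vanishes as soon as $\omega_{H'_\k,i}$ and $\omega_{H_\k,i}$ have the same $\k$-dimension.

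The single genuinely non-formal input — and the place where the assumption that the isogeny lives over $\O_K$ is indispensable — is therefore the equality $\dim_\k\omega_{H'_\k,i}=\dim_\k\omega_{H_\k,i}$; over $\k$ alone it can fail, as Example~\ref{ex-key-unr} illustrates. Here I would invoke the exact sequence \eqref{xct-ctg} for the isogeny $H\to H'$ over $\O_K$, namely $\omega_{H'}\to\omega_H\to\omega_X\to0$: as $\omega_{H'}$ and $\omega_H$ are free $\O_K$-modules of the same rank $\dim H=\dim H'$ and $\omega_X$ is torsion, the first map is injective with finite cokernel. Decomposing everything along $\O_K\otimes_{\Z_p}\O_F\cong\prod_{i\in\Z/d\Z}\O_K$ (valid since $F$ is unramified and embedded in $K_0$), each $\omega_{H',i}\hookrightarrow\omega_{H,i}$ is still injective with finite cokernel between free $\O_K$-modules, so $\rk_{\O_K}\omega_{H',i}=\rk_{\O_K}\omega_{H,i}$; reducing modulo the maximal ideal of $\O_K$, and using that $\omega$ is compatible with base change and with the eigendecomposition, gives the required equality. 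Putting the three steps together, $\length_{W(\k)}C_i$ is independent of $i\in\Z/d\Z$, and since the $d$ of them add up to $\height X$, each equals $\frac1d\height X$. I expect essentially all the subtlety to be concentrated in this last step — pinning down the unique role played by the $\O_K$-structure — after which the snake-lemma bookkeeping is entirely routine.
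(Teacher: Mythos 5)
Your proof is correct, and it follows the same overall architecture as the paper's (the commutative ladder of short exact sequences coming from the injections $\phi\colon\D(\cdot)_{i-1}\hookrightarrow\D(\cdot)_i$, followed by length bookkeeping); the genuine divergence is in how the key equality $\dim_\k\D(H_\k^\vee)_i/\phi\D(H_\k^\vee)_{i-1}=\dim_\k\D(H_\k'^{\,\vee})_i/\phi\D(H_\k'^{\,\vee})_{i-1}$ is established. The paper invokes Remark~\ref{rmk-Hdg-unr} to identify these numbers with $\dim_{K'}\Fil^0 N_{\sigma^i}$ read off from the filtered isocrystal with coefficients of $(H^\vee,\iota^\vee)$, and then appeals to the fact that this filtered isocrystal is an invariant of the $\O_F$-equivariant isogeny class. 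You bypass the filtered isocrystal entirely and argue at the integral level: you decompose $\omega_{H'}\hookrightarrow\omega_H$ from \eqref{xct-ctg} along $\O_K\otimes_{\Z_p}\O_F\cong\prod_i\O_K$ and observe directly that each $\omega_{H',i}\hookrightarrow\omega_{H,i}$ is an injection of free $\O_K$-modules with torsion cokernel, hence a rank equality, which reduces mod $\mathfrak m_K$ to the needed dimension count. This is more elementary and self-contained; what the paper's route buys is a conceptual reading — the constancy of $\length C_i$ is a shadow of the Hodge polygon being an isogeny invariant over $\O_K$, precisely the property that fails over $\k$ as in Example~\ref{ex-key-unr} — and it primes the reader for the ramified analogue in Lemma~\ref{key-ram}, where the naive rank argument no longer splits cleanly over embeddings. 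Both arguments exploit the $\O_K$-structure at exactly the same point (freeness of the eigencomponents of $\omega_H$ and $\omega_{H'}$), and your snake-lemma formulation is a tidy packaging of the diagram chase the paper does by hand.
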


\begin{proof}
 For $i\in\Z/d\Z$, the $\sigma$-linear endomorphism $\phi:=\phi_{H_\k^\vee}$ of $\D(H_\k^\vee)$
 restricts to injective maps $\phi\colon\D(H_\k^\vee)_{i-1}\rightarrow\D(H_\k^\vee)_i$.
 Reasoning similarly for $\D(H_\k'^{\,\vee})$ and using that the map
 $\D(H_\k'^{\,\vee})\rightarrow\D(H_\k^\vee)$ is compatible with the $\sigma$-linear endomorphisms,
 we obtain commutative diagrams:
 \[
 \begin{tikzcd}
  0 \arrow{r}{} & \D(H_\k'^{\,\vee})_{i-1} \arrow{r}{} \arrow{d}{\phi}
   & \D(H_\k^\vee)_{i-1} \arrow{r}{} \arrow{d}{\phi}
   & C_{i-1} \arrow{r}{} \arrow{d}{\phi} & 0 \\
  0 \arrow{r}{} & \D(H_\k'^{\,\vee})_i \arrow{r}{}
   & \D(H_\k^\vee)_i \arrow{r}{}    & C_i \arrow{r}{} & 0
 \end{tikzcd}
 \]
 with exact rows and the first two columns being injective.
 Now, the argument of Remark~\ref{rmk-Hdg-unr} shows that the numbers
 $\dim_\k\D(H_\k^\vee)_i/\phi\D(H_\k^\vee)_{i-1}$ are determined by the filtered isocrystal
 with coefficients in $F$ associated to $(H^\vee\!,\iota^\vee)$,
 where $\iota^\vee$ denotes the dual action induced by $\iota$.
 However, since $H^\vee$ and $H'^{\,\vee}$ are $\O_F$-equivariantly isogenous,
 the respective associated filtered isocrystals with coefficients in $F$ are isomorphic.
 In particular:
 \[
  \dim_\k\D(H_\k^\vee)_i/\phi\D(H_\k^\vee)_{i-1}=
  \dim_\k\D(H_\k'^{\,\vee})_i/\phi\D(H_\k'^{\,\vee})_{i-1}
 \]
 for every $i\in\Z/d\Z$.
 By the commutative diagram above, we conclude that $\length C_i=\length C_{i-1}$,
 i.e.\ $\length C_i$ is constant over $i\in\Z/d\Z$.
 Finally, since these lengths sum to $\height X$,
 we must have $\length C_i=\frac{1}{d}\height X$ for every $i\in\Z/d\Z$.
\end{proof}

\begin{ex}\label{ex-key-unr}
 Let $\k=\bar{\F}_p$, so that $K_0=\breve{\Q}_p$,
 and let $F=\Q_{p^2}$ be the quadratic unramified extension of $\Q_p$;
 fix an embedding $F\subseteq K_0$.
 Let $(N,\phi)$ be the isocrystal over $\k$ given by $N:=K_0^4$,
 with canonical basis $e_1,f_1,e_2,f_2$, and:
 \[
  \phi\colon\quad e_1\mapsto pf_1,\quad f_1\mapsto e_1,\quad e_2\mapsto f_2,\quad f_2\mapsto pe_2.
 \]
 Let then $F$ act on $N$ through the fixed embedding $F\subseteq K_0$ on $\braket{e_1,e_2}$ and
 through $\sigma$ on $\braket{f_1,f_2}$;
 note that this action respects the $\sigma$-linear endomorphism $\phi$ and
 hence defines a map of $\Q_p$-algebras $\iota\colon F\rightarrow\End(N,\phi)$.
 Consider now the two $\phi$-stable and $\O_F$-stable $W(\k)$-lattices $M,M'\subseteq N$
 generated respectively by $\Set{e_1,f_1,e_2,f_2}$ and $\Set{e_1,pf_1,e_2,f_2}$.
 Let then $(H,\iota)$ and $(H',\iota')$ be the $p$-divisible groups over $\k$ with
 endomorphism structure for $\O_F$ whose contravariant Dieudonné modules are respectively
 $M$ and $M'$, together with the restriction of $\phi$, where
 $\iota$ and $\iota'$ correspond to the restriction of $\iota$ to the respective Dieudonné module.
 Now, the inclusion $M'\subseteq M$ translates into an $\O_F$-equivariant isogeny $H\rightarrow H'$.
 However, the cokernel $M/M'$ (which represents the module $C$ of \eqref{xct-Die})
 is given by a $\k$-vector-space of dimension 1, generated by $f_1$.
 Thus, the conclusion of the previous lemma does not hold in this case
 (only the component relative to $i=1$ has nonzero length).
 In particular, the isogeny $H\rightarrow H'$ cannot afford an $\O_F$-equivariant lift
 to $\O_K$.
 
 Let us also point out that, computing the Hodge polygons with the definition outlined in
 Remark~\ref{rmk-Hdg-unr} (after dualising, since there we used the covariant Dieudonné module),
 we obtain:
 \[
  \Hdg(H^\vee\!,\iota^\vee)=(1,0)\qquad\text{and}\qquad\Hdg(H'^{\,\vee}\!,\iota'^{\,\vee})=(1/2,1/2),
 \]
 where $\iota^\vee,\iota'^{\,\vee}$ denote the dual $\O_F$-action of $\iota,\iota'$ respectively.
 Thus, the classical definition of the Hodge polygon for
 $p$-divisible groups over $\k$ with endomorphism structure for $\O_F$ does not provide,
 in general, an invariant up to $\O_F$-equivariant isogeny.
\end{ex}

Back to the main discussion, let now $F$ be any finite extension of $\Q_p$.
Denote by $F^\nr$ the maximal unramified subextension (or \emph{inertia subfield})
of $F|\Q_p$ and by $\O_{F^\nr}$ its ring of integers,
so that $F^\nr|\Q_p$ is an unramified extension of degree $f(F|\Q_p)$, the inertia degree of $F|\Q_p$,
and $F|F^\nr$ is a totally ramified extension of degree $e(F|\Q_p)$,
the ramification index of $F|\Q_p$.

Assume that $K$ contains all embeddings $\tau$ of $F$ in an algebraic closure.
In particular, $F^\nr$ admits an embedding into $K_0$;
note that, given $\upsilon_0\colon F^\nr\rightarrow K_0$,
we obtain all the embeddings of $F^\nr$ in $K_0$ as
$\sigma^i\circ\upsilon_0$ for $1\le i\le f(F|\Q_p)$.
Then, applying \eqref{W-dcp-unr} to $F^\nr$ and base changing along $\O_{F^\nr}\rightarrow\O_F$,
we get the following isomorphism of $W(\k)$-algebras:
\begin{equation}\label{W-dcp}
 W(\k)\otimes_{\Z_p}\O_F\cong
  \prod_{\upsilon\colon F^\nr\rightarrow K_0}W(\k)\otimes_{\upsilon,\O_{F^\nr}}\O_F,
\end{equation}
where $\upsilon$ runs through all the embeddings of $F^\nr$ in $K_0$.
In the language of \S\ref{S-ic-end}, here we have that $f=f_F$,
as any embedding of $F^\nr$ into $K_0$ identifies $F^\nr$ with $K_0^{\sigma^f}$.
In fact, the isomorphism above is compatible with \eqref{K0-dcp},
up to inverting $p$ and identifying each
factor~$W(\k)\otimes_{\upsilon,\O_{F^\nr}}\O_F[\frac{1}{p}]=K_0\otimes_{\upsilon,F^\nr}F$ with
the corresponding $K_F^{(i)}$.
A consequence of \eqref{W-dcp} is that every $W(\k)\otimes_{\Z_p}\O_F$-module $P$ decomposes as:
\begin{equation}\label{egn-dcp-unr}
 P=\bigoplus_\upsilon P_\upsilon, \quad\text{with }
  P_\upsilon=\Set{w\in P|\forall a\in\O_{F^\nr}\colon(1\otimes a)w=(\upsilon(a)\otimes1)w},
\end{equation}
where each $P_\upsilon$ is a module over the ring $W(\k)\otimes_{\upsilon,\O_{F^\nr}}\O_F$.
Note that this is a discrete valuation ring; in fact,
it is the ring of integers of a totally ramified extension of $K_0$ of degree $e(F|\Q_p)$.
After base change along $W(\k)\rightarrow\O_K$, the isomorphism~\eqref{W-dcp} extends to:
\begin{equation}\label{OK-dcp}
 \O_K\otimes_{\Z_p}\O_F\cong\prod_\upsilon\O_K\otimes_{\upsilon,\O_{F^\nr}}\O_F,
\end{equation}
inducing a similar decomposition of every $\O_K\otimes_{\Z_p}\O_F$-module
into a direct sum of modules over the rings $\O_K\otimes_{\upsilon,\O_{F^\nr}}\O_F$.

Fix now $\upsilon\colon F^\nr\rightarrow K_0$ and consider the following isomorphism of $K$-algebras:
\[
 K\otimes_{\upsilon,F^\nr}F\cong\prod_{\tau|\upsilon}K,\qquad b\otimes a\mapsto(b\tau(a))_\tau,
\]
where $\tau|\upsilon$ stands for the embeddings of $F$ in $K$ which restrict to $\upsilon$ on $F^\nr$.
If $M$ is an $\O_K\otimes_{\upsilon,\O_{F^\nr}}\O_F$-module,
then $M_K:=M\otimes_{\O_K}K$ is a module over the above ring and therefore we have a decomposition:
\begin{equation}\label{egn-dcp-ram}
 M_K=\bigoplus_{\tau|\upsilon}M_{K,\tau}, \enskip\text{with }
  M_{K,\tau}=\Set{w\in M_K|\forall a\in F\colon(1\otimes a)w=(\tau(a)\otimes1)w}.
\end{equation}
Note that even when $M$ is torsion free, so that $M\subseteq M_K$,
this decomposition does not, in general, descend to a decomposition of $M$ itself.
For instance, let $M=\O_K\otimes_{\upsilon,\O_{F^\nr}}\O_F$ and
consider the same quadratic setup as in Example~\ref{ex-Hdg-ram}, i.e.\ $F=\Q_p(\pi)$ with $\pi^2=p$.
Our current assumptions imply that $K$ contains a square root $\sqrt{p}$ of $p$.
Then, $\O_F$ acts via the embedding~$\tau_1\colon\pi\mapsto\sqrt{p}$ on
$\O_K\cdot(\sqrt{p}\otimes1+1\otimes\pi)\subseteq M$ and
via the other embedding~$\tau_2\colon\pi\mapsto-\sqrt{p}$ on
$\O_K\cdot(-\sqrt{p}\otimes1+1\otimes\pi)\subseteq M$;
these two submodules, however, do not sum to the whole of $M$.
Nevertheless, this example describes essentially the case where we can apply the following lemma,
that is, when the $\O_K\otimes_{\upsilon,\O_{F^\nr}}\O_F$-module $M$ arises as
the base change of a finite free $W(\k)\otimes_{\upsilon,\O_{F^\nr}}\O_F$-module $P$.
In this situation, the next statement ensures a similar regularity property to
the one established by the previous lemma.

\begin{lem}\label{key-ram}
 Assume that $K$ contains all embeddings $\tau$ of $F$ in an algebraic closure and
 fix $\upsilon\colon F^\nr\rightarrow K_0$.
 Let $g\colon P\rightarrow Q$ be a surjective homomorphism of modules over the ring
 $W(\k)\otimes_{\upsilon,\O_{F^\nr}}\O_F$,
 with $P$ finitely generated and free and $Q$ of finite length.
 Consider the base change $g'\colon P_{\O_K}\rightarrow Q_{\O_K}$ along $W(\k)\rightarrow\O_K$ and
 write:
 \[
  P_K:=P_{\O_K}\otimes_{\O_K}K=\bigoplus_{\tau|\upsilon}P_{K,\tau}
 \]
 as in \eqref{egn-dcp-ram}.
 Let $I\subseteq\Set{\tau\colon F\rightarrow K|\tau|\upsilon}$ and
 set $P_I:=P_{\O_K}\cap\bigoplus_{\tau\in I}P_{K,\tau}\subseteq P_K$.
 Then:
 \[
  \length_{\O_K}g'(P_I)=\frac{|I|}{e(F|\Q_p)}\length_{\O_K}Q_{\O_K},
 \]
 where $|I|$ denotes the cardinality of $I$.
\end{lem}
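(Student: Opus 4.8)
The plan is to put $g$ into a normal form over the discrete valuation ring $B:=W(\k)\otimes_{\upsilon,\O_{F^\nr}}\O_F$, base change to $\O_K$, and then compute lengths one elementary divisor at a time. Fix a uniformiser $\pi_F$ of $\O_F$ and write $\pi\in B$ for its image, a uniformiser of $B$; since $\pi_F^{e(F|\Q_p)}$ and $p$ have the same valuation in $F$, we have $\pi^{e(F|\Q_p)}=p\cdot u$ for some unit $u$. As $P$ is free over $B$ and $Q$ has finite length, the elementary divisor theorem gives a $B$-basis $e_1,\dots,e_s$ of $P$ and integers $a_1,\dots,a_s\ge0$ with $\Ker g=\bigoplus_j\pi^{a_j}Be_j$, so $Q\cong\bigoplus_j B/\pi^{a_j}B$. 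Writing $B_{\O_K}:=B\otimes_{W(\k)}\O_K=\O_K\otimes_{\upsilon,\O_{F^\nr}}\O_F$ and base changing along the flat map $W(\k)\to\O_K$, one obtains $P_{\O_K}=\bigoplus_j B_{\O_K}e_j$, $\Ker g'=\bigoplus_j\pi^{a_j}B_{\O_K}e_j$ and $Q_{\O_K}\cong\bigoplus_j B_{\O_K}/\pi^{a_j}B_{\O_K}$.

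Next I would trace the decomposition~\eqref{egn-dcp-ram} through this basis. Since $B_{\O_K}\otimes_{\O_K}K=K\otimes_{\upsilon,F^\nr}F\cong\prod_{\tau\mid\upsilon}K$, writing $K_\tau$ for the $\tau$-th factor we have $P_K=\bigoplus_j\bigoplus_{\tau\mid\upsilon}K_\tau e_j$, hence $P_{K,\tau}=\bigoplus_j K_\tau e_j$ and, for $I$ as in the statement,
\[
 P_I=P_{\O_K}\cap\bigoplus_{\tau\in I}P_{K,\tau}=\bigoplus_j L_I\,e_j,\qquad L_I:=B_{\O_K}\cap\bigoplus_{\tau\in I}K_\tau.
\]
The $\O_K$-module $L_I$ is the kernel of the projection $B_{\O_K}\to\bigoplus_{\tau\notin I}K_\tau$, which becomes surjective after inverting $p$; so $L_I$ is $\O_K$-free of rank $|I|$ with $L_I\otimes_{\O_K}K=\bigoplus_{\tau\in I}K_\tau$. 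Moreover $\bigoplus_{\tau\in I}K_\tau$ is stable under multiplication by $\pi$ and by $\pi^{-1}$ (in $\prod_{\tau\mid\upsilon}K$), which yields the saturation identity $L_I\cap\pi^aB_{\O_K}=\pi^aL_I$ for all $a\ge0$. Feeding this back,
\[
 g'(P_I)=\bigoplus_j\frac{L_I+\pi^{a_j}B_{\O_K}}{\pi^{a_j}B_{\O_K}}\cong\bigoplus_j L_I/\pi^{a_j}L_I.
\]

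It then remains to compute lengths. Multiplication by $\pi$ is injective on $L_I$ (it is invertible on $\prod_{\tau\mid\upsilon}K$), so $\length_{\O_K}(L_I/\pi^aL_I)=a\cdot\length_{\O_K}(L_I/\pi L_I)$; taking $a=e(F|\Q_p)$ and using $\pi^{e(F|\Q_p)}=pu$ together with the freeness of $L_I$,
\[
 e(F|\Q_p)\cdot\length_{\O_K}(L_I/\pi L_I)=\length_{\O_K}(L_I/pL_I)=|I|\cdot\length_{\O_K}(\O_K/p\O_K)=|I|\,e,
\]
whence $\length_{\O_K}(L_I/\pi^aL_I)=\tfrac{a\,|I|\,e}{e(F|\Q_p)}$. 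The same computation with $L_I$ replaced by $B_{\O_K}$ (the case where $I$ is the full fibre, so $|I|=e(F|\Q_p)$) gives $\length_{\O_K}(B_{\O_K}/\pi^aB_{\O_K})=a\,e$, hence $\length_{\O_K}Q_{\O_K}=e\sum_j a_j$. Summing over $j$,
\[
 \length_{\O_K}g'(P_I)=\sum_j\frac{a_j\,|I|\,e}{e(F|\Q_p)}=\frac{|I|}{e(F|\Q_p)}\,e\sum_j a_j=\frac{|I|}{e(F|\Q_p)}\,\length_{\O_K}Q_{\O_K},
\]
as required.

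The step I expect to be the main obstacle is the second one: unlike $B_{\O_K}\otimes_{\O_K}K\cong\prod_{\tau\mid\upsilon}K$, the ring $B_{\O_K}$ is not a product of discrete valuation rings, so $L_I$ is a genuinely non-split lattice; what makes the argument go through is precisely that a $B$-basis of $P$ adapted to $\Ker g$ is automatically ``diagonal'' for the $\tau$-decomposition and that passing to $g'(P_I)$ commutes with forming $L_I$, which is the content of the saturation identity $L_I\cap\pi^aB_{\O_K}=\pi^aL_I$. The remaining ingredients — elementary divisor theory over $B$ and the length count resting on $\pi^{e(F|\Q_p)}=p\cdot(\text{unit})$ — are routine.
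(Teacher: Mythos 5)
Your proof is correct. Let me compare it with the paper's. Both arguments begin the same way: elementary divisor theory over the discrete valuation ring $W(\k)\otimes_{\upsilon,\O_{F^\nr}}\O_F$ reduces to the case where $P$ has rank one and $g$ is projection onto $B/(a)$. From there the paper proceeds by filtering $B_{\O_K}$ by the direct summands $P_s := P_{\O_K}\cap\bigoplus_{l\le s}P_{K,\tau_l}$, choosing an $\O_K$-basis adapted to this filtration, representing multiplication by $a'$ as a triangular matrix with diagonal entries $\tau_l'(a')$, and then invoking that the $\tau_l'(a')$ are Galois conjugates of $a\in W_{\O_F,\upsilon}(\k)$ over $K_0$ and hence have equal valuation. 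You instead normalise the elementary divisors to be powers of a uniformiser $\pi\in B$, establish the saturation identity $L_I\cap\pi^a B_{\O_K}=\pi^a L_I$ (so that passing to $g'(P_I)$ is compatible with forming $L_I$), and then read off lengths from $\length(L_I/\pi^a L_I)=a\cdot\length(L_I/\pi L_I)$ together with $\pi^{e(F|\Q_p)}=p\cdot(\text{unit})$ and the $\O_K$-freeness of $L_I$ of rank $|I|$. Your relation $\pi^{e(F|\Q_p)}=p\cdot(\text{unit})$ is precisely the special case of the ``equal valuations'' phenomenon that results after normalising the elementary divisor to a uniformiser power, so the underlying mechanism is the same as in the paper and rests on the same essential point (emphasised in the remark following the lemma): the elementary divisors come from $B$, not merely from $B_{\O_K}$. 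What your route buys is that it avoids the adapted filtration, the triangular-matrix / determinant bookkeeping, and the explicit Galois-conjugation argument, replacing all of it with the one line $\pi^{d'}L_I=pL_I$; the small price is the saturation identity, which you prove cleanly from $\pi$-invertibility on $\bigoplus_{\tau\in I}K_\tau$. This is a somewhat tidier argument, and I see no gaps in it.
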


\begin{proof}
 Set, for short, $W_{\O_F,\upsilon}(\k):=W(\k)\otimes_{\upsilon,\O_{F^\nr}}\O_F$.
 This being a discrete valuation ring and, hence, a principal ideal domain,
 we may find a $W_{\O_F,\upsilon}(\k)$-basis of $P$ such that,
 under the corresponding identification $P\cong W_{\O_F,\upsilon}(\k)^r$
 (if $r\in\N$ is the rank of $P$), the homomorphism $g$ is represented by a canonical projection:
 \[
  g\colon W_{\O_F,\upsilon}(\k)^r\longrightarrow\bigoplus_{j=1}^r W_{\O_F,\upsilon}(\k)/(a_j),
 \]
 for some elements $a_j\in W_{\O_F,\upsilon}(\k)$ different from zero (as $Q$ is of finite length).
 Then, we may write:
 \[
  g'\colon (\O_K\otimes_{\upsilon,\O_{F^\nr}}\O_F)^r\longrightarrow
   \bigoplus_{j=1}^r (\O_K\otimes_{\upsilon,\O_{F^\nr}}\O_F)/(a_j'),
 \]
 where $a_j'$ is the image of $a_j$ under the inclusion
 $W_{\O_F,\upsilon}(\k)\subseteq\O_K\otimes_{\upsilon,\O_{F^\nr}}\O_F$, for $j=1,\dots,r$.
 The construction of $P_I$, as well as $g'(P_I)$,
 is compatible with this presentation of the map as a direct sum of $r$ components.
 Thus, by additivity of the length function, we may work independently on each factor and assume,
 without loss of generality, that the map~$g$ is of the form:
 \[
  g\colon W_{\O_F,\upsilon}(\k)\rightarrow W_{\O_F,\upsilon}(\k)/(a),
 \]
 for some element $a\in W_{\O_F,\upsilon}(\k)$.
 Letting $a'\in \O_K\otimes_{\upsilon,\O_{F^\nr}}\O_F$ be the image of $a$,
 we then have a short exact sequence of $\O_K\otimes_{\upsilon,\O_{F^\nr}}\O_F$-modules:
 \begin{equation}\label{ses-key-ram}
  0\longrightarrow\O_K\otimes_{\upsilon,\O_{F^\nr}}\O_F\xlongrightarrow{a'}
   \O_K\otimes_{\upsilon,\O_{F^\nr}}\O_F\xlongrightarrow{g'}
   (\O_K\otimes_{\upsilon,\O_{F^\nr}}\O_F)/(a')\longrightarrow 0.
 \end{equation}
 Set now $d':=e(F|\Q_p)$ and choose an ordering $\tau_1,\dots,\tau_{d'}$ of the set
 $\Set{\tau\colon F\rightarrow K|\tau|\upsilon}$ such that $I=\Set{\tau_1,\dots,\tau_{\vert I\vert}}$.
 We consider the filtration of $\O_K\otimes_{\upsilon,\O_{F^\nr}}\O_F$-modules:
 \begin{equation}\label{fil-key-ram}
  \O_K\otimes_{\upsilon,\O_{F^\nr}}\O_F=P_{\O_K}=P_{d'}\supseteq\dots\supseteq P_0=0
 \end{equation}
 given by $P_s:=P_{\O_K}\cap\bigoplus_{l=1}^s P_{K,\tau_l}\subseteq P_K$, $s=0,\dots,d'$,
 so that $P_{\vert I\vert}=P_I$.
 Regarding these objects as $\O_K$-modules, the above definition gives a filtration by direct summands;
 in fact, for $s=1,\dots,d'$, the inclusion $P_{\O_K}\subseteq P_K$ induces an embedding of
 the graded piece $P_s/P_{s-1}$ into $P_{K,\tau_s}$, ensuring that $P_s/P_{s-1}$ is torsion free.
 In addition, since this embedding is $\O_F$-linear,
 we see that the ring $\O_K\otimes_{\upsilon,\O_{F^\nr}}\O_F$ acts on $P_s/P_{s-1}$ via the map:
 \[
  \tau_s'\colon\O_K\otimes_{\upsilon,\O_{F^\nr}}\O_F\rightarrow
   \O_K,\qquad b\otimes c\mapsto b\tau_s(c).
 \]
 Choose now an $\O_K$-basis of $P_{\O_K}$ adapted to the above filtration.
 Then, as an $\O_K$-linear homomorphism, the first map of \eqref{ses-key-ram}
 (that is, multiplication by $a'$ on $\O_K\otimes_{\upsilon,\O_{F^\nr}}\O_F$)
 is represented by a triangular matrix $A$,
 with the entries $\tau_1'(a'),\dots,\tau_{d'}'(a')$ on the diagonal.
 In particular, we have:
 \begin{equation}\label{lg1-key-ram}
  \length_{\O_K}Q_{\O_K}=\length_{\O_K}(\O_K\otimes_{\upsilon,\O_{F^\nr}}\O_F)/(a')=
   e\cdot v(\det A)=e\cdot\sum_{l=1}^{d'}v(\tau_l'(a')),
 \end{equation}
 where $v$ is the valuation of $K$, normalised at $v(p)=1$.
 On the other hand, restricting $g'$ to $P_I\subseteq P_{\O_K}$ and observing that
 $P_I\cap a'P_{\O_K}=a'P_I$, we obtain a new exact sequence of $\O_K$-modules:
 \[
  0\longrightarrow P_I\xlongrightarrow{a'} P_I\xlongrightarrow{g'}g'(P_I)\longrightarrow 0.
 \]
 Here, the endomorphism $a'\colon P_I\rightarrow P_I$ may be represented by a triangular matrix $A_I$,
 with the entries $\tau_1'(a'),\dots,\tau_{\vert I\vert}'(a')$ on the diagonal.
 In particular, we have:
 \begin{equation}\label{lg2-key-ram}
  \length_{\O_K}g'(P_I)=e\cdot v(\det A_I)=e\cdot\sum_{l=1}^{\vert I\vert}v(\tau_l'(a')).
 \end{equation}
 Recall now that $a'$ comes from an element $a$ of $W_{\O_F,\upsilon}(\k)$,
 which is the ring of integers of a field extension $K_0'$ of $K_0$ of degree $d'$.
 Moreover, the restrictions of the maps $\tau_l'$ to $W_{\O_F,\upsilon}(\k)$,
 for $l=1,\dots,d'$, correspond to the different embeddings of $K_0'$ in $K$ over $K_0$.
 Thus, all the elements $\tau_l'(a')\in\O_K$ have the same valuation.
 The desired formula follows then from \eqref{lg1-key-ram} and \eqref{lg2-key-ram}.
\end{proof}

\begin{rmk}
 In the preceding proof, it is essential that
 $a'\in\O_K\otimes_{\upsilon,\O_{F^\nr}}\O_F$ comes from some $a\in W_{\O_F,\upsilon}(\k)$,
 in order for the various elements $\tau_l'(a')\in\O_K$ to have the same valuation.
 Consider, for instance, the usual quadratic case $F=\Q_p(\pi)$ with $\pi^2=p$,
 where we have two embeddings $\tau_1\colon\pi\mapsto\sqrt{p}$ and
 $\tau_2\colon\pi\mapsto-\sqrt{p}$ of $F$ in $K$.
 For $a'=\sqrt{p}\otimes1+1\otimes\pi\in\O_K\otimes_{\Z_p}\O_F$, then,
 we get that $\tau_1'(a')=2\sqrt{p}$, whereas $\tau_2'(a')=0$.
\end{rmk}

We can now prove the following crucial proposition.

\begin{prp}\label{key}
 Let $(H,\iota)$ be a $p$-divisible group over $\O_K$ with endomorphism structure for $\O_F$.
 Then:
 \[
  \HN(H[p],\iota)\le\Hdg(H,\iota).
 \]
\end{prp}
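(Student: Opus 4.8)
\emph{The plan} is to reduce the inequality to a statement about individual $\iota$-stable finite subgroup schemes of $H[p]$, translate this into a relative-position statement for lattices via Dieudonné theory over $\O_K$, and then localise along the decomposition \eqref{W-dcp} of $W(\k)\otimes_{\Z_p}\O_F$, so that the analogue of Lemma~\ref{key-unr} for the inertia subfield $F^\nr$ takes care of the unramified direction and Lemma~\ref{key-ram} takes care of the ramified one.

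\emph{Reduction to subgroups.} Both $\HN(H[p],\iota)$ and $\Hdg(H,\iota)$ are concave polygons on $[0,n]$ starting at the origin, and they share their right endpoint, since $\HN(H[p],\iota)(n)=\deg H[p]/d=\dim H/d=\Hdg(H,\iota)(n)$ (using $\omega_{H[p]}=\omega_H/p\omega_H$). As $\HN(H[p],\iota)$ is the concave envelope of the points $(\height X'/d,\deg X'/d)$ over all $\iota$-stable closed sub-$p$-groups $X'\subseteq H[p]$, and $\Hdg(H,\iota)$ is concave, it suffices to place each such point weakly below $\Hdg(H,\iota)$. After harmlessly enlarging $K$ to contain all embeddings $\tau\colon F\to K$, write $h':=\height X'$ and $a_\tau:=\dim_K\gr^{-1}N_\tau$, so that $\sum_\tau a_\tau=\dim H$ and, the Hodge filtration having jumps in $\Set{-1,0}$, $\Hdg(H,\iota)(x)=\tfrac1d\sum_\tau\min(x,a_\tau)$. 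The claim to prove is then $\deg X'\le\sum_\tau\min(h'/d,a_\tau)$.

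\emph{Passage to lattices.} Fix $X'$ and put $H':=H/X'$, so $\pi\colon H\to H'$ is an $\O_F$-equivariant isogeny with kernel $X'$. Its reduction to $\k$ gives, as in \eqref{xct-Die}, a short exact sequence $0\to\D(H_\k'^{\,\vee})\to\D(H_\k^\vee)\to C\to 0$ of $W(\k)\otimes_{\Z_p}\O_F$-modules in which $C$ is killed by $p$ (because $X'\subseteq H[p]$) and has $\k$-dimension $h'$. Tensoring with $\O_K$ identifies this with $0\to M(H'^{\,\vee})\to M(H^\vee)\xrightarrow{g'}C_{\O_K}\to 0$, where now $pM(H^\vee)\subseteq M(H'^{\,\vee})\subseteq M(H^\vee)$. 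Intersecting with the Hodge filtration $\mathrm{Fil}:=\omega_H\otimes_{\O_K}K\subseteq M(H^\vee)\otimes_{\O_K}K$ — the same subspace for $H$ and $H'$ under the identification coming from the isogeny — recovers, by \eqref{Hdg-fil} for $H^\vee$, $\omega_H=\mathrm{Fil}\cap M(H^\vee)$ and $\omega_{H'}=\mathrm{Fil}\cap M(H'^{\,\vee})$; the sequence $\omega_{H'}\hookrightarrow\omega_H\to\omega_{X'}\to 0$ of \eqref{xct-ctg} is then exact with injective first map, and $\deg X'=\tfrac1e\length_{\O_K}\bigl((\mathrm{Fil}\cap M(H^\vee))/(\mathrm{Fil}\cap M(H'^{\,\vee}))\bigr)=\tfrac1e\length_{\O_K}g'(\omega_H)$.

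\emph{Localisation and conclusion.} Decomposing along the embeddings $\upsilon\colon F^\nr\to K_0$ via \eqref{W-dcp} gives $M(H^\vee)=\bigoplus_\upsilon M(H^\vee)_\upsilon$, $C_{\O_K}=\bigoplus_\upsilon C_{\O_K,\upsilon}$, $\mathrm{Fil}=\bigoplus_\upsilon\mathrm{Fil}_\upsilon$ and $\omega_H=\bigoplus_\upsilon(\mathrm{Fil}_\upsilon\cap M(H^\vee)_\upsilon)$, so that $\deg X'=\tfrac1e\sum_\upsilon\length_{\O_K}g'(\mathrm{Fil}_\upsilon\cap M(H^\vee)_\upsilon)$. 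Since the $\sigma$-linear Frobenius of $\D(H_\k^\vee)$ cyclically permutes the $\upsilon$-components and $\dim_K\mathrm{Fil}_\upsilon=\sum_{\tau\mid\upsilon}a_\tau$ is an isogeny invariant, the snake-lemma argument of Lemma~\ref{key-unr} applied to $F^\nr$ in place of $F$ shows $\length_{W(\k)}C_\upsilon=h'/f(F|\Q_p)$, hence $\length_{\O_K}C_{\O_K,\upsilon}=eh'/f(F|\Q_p)$, for every $\upsilon$. Fix $\upsilon$. Then $\D(H_\k^\vee)_\upsilon$ is free over the discrete valuation ring $W(\k)\otimes_{\upsilon,\O_{F^\nr}}\O_F$, and Lemma~\ref{key-ram} applied to $g_\upsilon\colon\D(H_\k^\vee)_\upsilon\twoheadrightarrow C_\upsilon$ gives $\length_{\O_K}g'(P_I)=|I|\,eh'/d$ for every $I\subseteq\Set{\tau\mid\upsilon}$, while $\mathrm{Fil}_\upsilon=\bigoplus_{\tau\mid\upsilon}\mathrm{Fil}_\tau$ with $\dim_K\mathrm{Fil}_\tau=a_\tau\le n$. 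Filtering $\mathrm{Fil}_\upsilon\cap M(H^\vee)_\upsilon$ by the partial sums $\bigoplus_{l\le s}\mathrm{Fil}_{\tau_l}$ for a suitable ordering $\tau_1,\dots,\tau_{e(F|\Q_p)}$ of $\Set{\tau\mid\upsilon}$, exactly along the lines of the proof of Lemma~\ref{key-ram}: each graded piece is killed by $p$ and therefore has length at most $e\,a_{\tau_s}$ (it is a quotient of a rank-$a_{\tau_s}$ free $\O_K/p$-module — the filtration constraint) and at most $eh'/d$ (the lattice constraint, from $\length g'(P_{\Set{\tau_1,\dots,\tau_s}})=s\,eh'/d$ and $C_{\O_K}$ being $p$-torsion). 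Combining these bounds gives $\length_{\O_K}g'(\mathrm{Fil}_\upsilon\cap M(H^\vee)_\upsilon)\le e\sum_{\tau\mid\upsilon}\min(h'/d,a_\tau)$, and summing over $\upsilon$ yields $\deg X'\le\sum_\tau\min(h'/d,a_\tau)$.

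\emph{The main obstacle} is this last length estimate: one must organise the ordering of the $\tau\mid\upsilon$ and the auxiliary basis of $\D(H_\k^\vee)_\upsilon$ so that the Hodge filtration $\mathrm{Fil}_\upsilon$ and the elementary-divisor structure of $C_\upsilon$ — equivalently, the relative position of $M(H'^{\,\vee})_\upsilon$ in $M(H^\vee)_\upsilon$ — are visible simultaneously. The equidistribution $\length g'(P_I)=|I|\,eh'/d$ supplied by Lemma~\ref{key-ram}, together with the fact that $C$ is $p$-torsion (so that every module appearing is an $\O_K/p$-module and rank bounds become length bounds), is precisely what upgrades the naive estimate $\length g'(\mathrm{Fil}_\upsilon\cap M(H^\vee)_\upsilon)\le e\min\bigl(\sum_\tau h'/d,\sum_\tau a_\tau\bigr)$ — too weak by the gap between $\min\circ\textstyle\sum$ and $\textstyle\sum\circ\min$ — to the sharp one; this is where ramification genuinely enters rather than being absorbed by functoriality, and where the algebra of Lemma~\ref{key-ram} is really needed beyond Lemma~\ref{key-unr}.
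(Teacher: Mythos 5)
Your argument is the paper's proof in light disguise: you work directly with $X'\subseteq H[p]$ and the isogeny $H\to H/X'$ where the paper dualises to $X^\vee\subseteq H[p]^\vee$ and $H^\vee\to H^\vee/X^\vee$, but this is only a change of presentation. Your reduction to $\iota$-stable subgroups of $H[p]$, the identity $\deg X'=\tfrac1e\length_{\O_K}g'(\omega_H)$, the $\upsilon$-decomposition, the use of Lemma~\ref{key-unr} for $F^\nr$, and the appeal to Lemma~\ref{key-ram} are all set up correctly. The gap is in the closing length estimate, where you assert a per-graded-piece bound that the cited facts do not deliver.

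Fix $\upsilon$, put $\omega_\upsilon:=\Fil_\upsilon\cap M(H^\vee)_\upsilon$ and consider your chain $P_s:=\omega_\upsilon\cap\bigoplus_{l\le s}\Fil_{\tau_l}$. The filtration bound $\length\bigl(g'(P_s)/g'(P_{s-1})\bigr)\le e\,a_{\tau_s}$ is correct, but the asserted per-step lattice bound $\length\bigl(g'(P_s)/g'(P_{s-1})\bigr)\le eh'/d$ does not follow from what you cite. Lemma~\ref{key-ram}, applied to the free module $\D(H_\k^\vee)_\upsilon$, gives $\length g'(M_{I_s})=s\,eh'/d$ for $M_{I_s}:=M(H^\vee)_\upsilon\cap\bigoplus_{l\le s}M(H^\vee)_{K,\tau_l}$; since $P_s\subseteq M_{I_s}$, this yields only the \emph{cumulative} bound $\length g'(P_s)\le s\,eh'/d$, and subtracting two cumulative upper bounds does not bound the difference. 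Nor does the natural map $g'(P_s)/g'(P_{s-1})\to g'(M_{I_s})/g'(M_{I_{s-1}})$ have visibly trivial kernel: one cannot deduce $g'(P_s)\cap g'(M_{I_{s-1}})=g'(P_{s-1})$ in general. The correct conclusion, and what the paper does, is: for each $I\subseteq\{\tau\mid\upsilon\}$, set $\omega_{\upsilon,I}:=\omega_\upsilon\cap\bigoplus_{\tau\in I}\Fil_\tau$ and bound $\length g'(\omega_{\upsilon,I})\le\length g'(M_I)=|I|\,eh'/d$ by Lemma~\ref{key-ram}, and $\length\bigl(g'(\omega_\upsilon)/g'(\omega_{\upsilon,I})\bigr)\le e\sum_{\tau\mid\upsilon,\,\tau\notin I}a_\tau$ by $p$-torsion and rank; add, and then take the \emph{optimal} $I=\{\tau\mid\upsilon:a_\tau\ge h'/d\}$. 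Equivalently, with the ordering $a_{\tau_1}\ge\dots\ge a_{\tau_{d'}}$ and $k$ the largest index with $a_{\tau_k}\ge h'/d$, split your chain at $k$: bound $\length g'(P_k)\le k\,eh'/d$ by the cumulative estimate and $\length g'(P_{d'})-\length g'(P_k)\le e\sum_{s>k}a_{\tau_s}$ by the rank estimate. What fails is summing a per-step $\min(eh'/d,e\,a_{\tau_s})$ over $s$.
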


\begin{proof}
 Set $h:=\height H$ and let $(\NF,\iota)$ be the filtered isocrystal over $K$ with
 coefficients in $F$ associated to $(H,\iota)$ via \eqref{pdv-fic-end},
 so that $\Hdg(H,\iota)=\Hdg(\NF,\iota)$.
 Up to replacing $K$ by a finite extension $K'$,
 we may assume that $K$ contains all embeddings $\tau$ of $F$ in an algebraic closure.
 Indeed, on the one hand, this step is anyway embedded in the definition of $\Hdg(\NF,\iota)$;
 moreover, the underlying filtration of $\NF$ is determined by $H$ through the embedding
 $\omega_{H^\vee}\hookrightarrow M(H)$ as in \eqref{Hdg-fil},
 which is compatible with respect to base change along $\O_K\rightarrow\O_{K'}$.
 On the other hand, $\HN(H[p],\iota)$ can only increase after the same base change
 (but in fact it does not, cf.\ \cite[Proposition~6]{F1}).
 
 We need to prove that for every $\iota$-stable closed sub-$p$-group $X'$ of $H[p]$ we have:
 \begin{equation}\label{key-clm1}
  \frac{\deg X'}{d}\le\Hdg(\NF,\iota)\left(\frac{\height X'}{d}\right).
 \end{equation}
 Set $X:=H[p]/X'$ and consider the dual $p$-group $X^\vee$ and the dual $p$-divisible group
 $H^\vee\!$, with the dual action $\iota^\vee$ induced by $\iota$.
 Then, $X^\vee$ is an $\iota^\vee$-stable closed sub-$p$-group of $H[p]^\vee\!$, inducing an
 $\O_F$-equivariant isogeny $H^\vee\rightarrow H^\vee/X^\vee$ with kernel $X^\vee$.
 After reducing to $\k$,
 we obtain an exact sequence of $W(\k)\otimes_{\Z_p}\O_F$-modules:
 \begin{equation}\label{key-1}
  0\longrightarrow\D(H'_\k)\longrightarrow\D(H_\k)\longrightarrow C\longrightarrow0
 \end{equation}
 as in \eqref{xct-Die}, where $H':=(H^\vee\!/X^\vee)^\vee$.
 Moreover, because $X^\vee\subseteq H[p]^\vee$ and the functor $\D$ is $\Z_p$-linear,
 we have that $C$ is a $p$-torsion module.
 
 Let now $F^\nr$ denote the inertia subfield of $F|\Q_p$, with ring of integers $\O_{F^\nr}$.
 By the decomposition in \eqref{egn-dcp-unr},
 the exact sequence above splits as a direct sum of exact sequences:
 \begin{equation}\label{key-2}
  0\longrightarrow\D(H'_\k)_\upsilon\longrightarrow\D(H_\k)_\upsilon
   \longrightarrow C_\upsilon\longrightarrow0
 \end{equation}
 of $W(\k)\otimes_{\upsilon,\O_{F^\nr}}\O_F$-modules,
 where $\upsilon$ runs through all the embeddings of $F^\nr$ in $K_0$.
 This decomposition coincides with that in \eqref{xct-Die-egn},
 if we restrict $\iota$ to $\O_{F^\nr}$.
 In particular, by Lemma~\ref{key-unr}, we have:
 \begin{equation}\label{key-3}
  \length_{W(\k)}C_\upsilon=\frac{1}{[F^\nr:\Q_p]}\height X^\vee=\frac{1}{[F^\nr:\Q_p]}\height X
 \end{equation}
 for all $\upsilon$'s.
 
 The $\O_F$-equivariant isogeny $H^\vee\rightarrow H^\vee\!/X^\vee$ also induces
 an exact sequence of modules over $\O_K\otimes_{\Z_p}\O_F$:
 \[
  0\longrightarrow\omega_{H^\vee\!/X^\vee}\longrightarrow\omega_{H^\vee}
   \longrightarrow\omega_{X^\vee}\longrightarrow0
 \]
 as in \eqref{xct-ctg}, the first map being injective because $\omega_{H^\vee\!/X^\vee}$ and
 $\omega_{H^\vee}$ are free $\O_K$-modules of the same rank and $\omega_{X^\vee}$ is torsion.
 Taking into consideration the natural exact sequence~\eqref{Hdg-fil},
 we obtain a commutative diagram of $\O_K\otimes_{\Z_p}\O_F$-modules:
 \[
 \begin{tikzcd}
  0 \arrow{r}{} & \omega_{H^\vee\!/X^\vee} \arrow{r}{} \arrow{d}{}
   & \omega_{H^\vee} \arrow{r}{} \arrow{d}{} & \omega_{X^\vee} \arrow{r}{} \arrow{d}{} & 0 \\
  0 \arrow{r}{} & M(H') \arrow{r}{} & M(H) \arrow{r}{} & C_{\O_K} \arrow{r}{} & 0
 \end{tikzcd}
 \]
 with exact rows, where the first two columns are direct summands (as free $\O_K$-modules) and
 hence the last column is injective as well;
 the notation $C_{\O_K}$ for the quotient of $M(H)$ by $M(H')$ is justified by the fact that
 the lower row identifies with the base change of \eqref{key-1} along $W(\k)\rightarrow\O_K$.
 Due to the isomorphism~\eqref{OK-dcp},
 this diagram splits as a direct sum of commutative diagrams:
 \begin{equation}\label{key-dgr}
 \begin{tikzcd}
  0 \arrow{r}{} & \omega_{H^\vee\!/X^\vee\!,\upsilon} \arrow{r}{} \arrow{d}{}
   & \omega_{H^\vee\!,\upsilon} \arrow{r}{} \arrow{d}{}
   & \omega_{X^\vee\!,\upsilon} \arrow{r}{} \arrow{d}{} & 0 \\
  0 \arrow{r}{} & M(H')_\upsilon \arrow{r}{} & M(H)_\upsilon \arrow{r}{}
   & C_{\O_K,\upsilon} \arrow{r}{} & 0
 \end{tikzcd}
 \end{equation}
 of modules over $\O_K\otimes_{\upsilon,\O_{F^\nr}}\O_F$,
 for $\upsilon$ varying as before.
 The properties of the previous diagram are preserved and the lower row identifies now with
 the base change of \eqref{key-2} along $W(\k)\rightarrow\O_K$.
 
 Recall that the underlying filtered vector space of $(\NF,\iota)$ is given by the
 $K_0$-vector-space $N:=\D(H_\k)[\frac{1}{p}]$
 (whose base change $N_K:=N\otimes_{K_0}K$ identifies with $M(H)_K:=M(H)\otimes_{\O_K}K$),
 with $\Fil^0 N_K=\omega_{H^\vee\!,K}:=\omega_{H^\vee}\otimes_{\O_K}K$,
 $\Fil^{-i}N_K=M(H)_K$ and $\Fil^i N_K=0$ for all $i\ge1$.
 Write:
 \[
   M(H)_{K}=\bigoplus_{\tau\colon F\rightarrow K}M(H)_{K,\tau},
   \qquad\omega_{H^\vee\!,K}=\bigoplus_{\tau\colon F\rightarrow K}\omega_{H^\vee\!,K,\tau}
 \]
 as in \eqref{egn-dcp-fic}, set $h_\tau:=\dim_K\omega_{H^\vee\!,K,\tau}$ and remember that
 $\dim_K M(H)_{K,\tau}=h/d$ for all $\tau$'s.
 Then:
 \[
  \Hdg(\NF,\iota)\left(\frac{\height X'}{d}\right)=
   \Hdg(\NF,\iota)\left(\frac{h-\height X}{d}\right)=
   \frac{1}{d}\sum_\tau\min\Set{\frac{h}{d}-h_\tau,\frac{h-\height X}{d}}.
 \]
 On the other side:
 \[
  \frac{\deg X'}{d}=\frac{1}{d}(\dim H-\deg X)=
   \frac{1}{d}(h-\dim H^\vee-\height X+\deg X^\vee).
 \]
 However,
 $\dim H^\vee=\rk_{\O_K}\omega_{H^\vee}=\dim_K\omega_{H^\vee\!,K}=\sum_\tau h_\tau$, so:
 \[
  h-\dim H^\vee-\height X=\sum_\tau\left(\frac{h-\height X}{d}-h_\tau\right).
 \]
 Thus, \eqref{key-clm1} may be rewritten as:
 \begin{equation}\label{key-clm1.5}
  \deg X^\vee\le\sum_\tau\min\Set{\frac{\height X}{d},h_\tau}.
 \end{equation}
 Observe now that:
 \[
  \deg X^\vee=\frac{1}{e}\length\omega_{X^\vee}=
   \frac{1}{e}\sum_\upsilon\length\omega_{X^\vee\!,\upsilon}
 \]
 and, using \eqref{key-3}:
 \[
  \frac{\height X}{d}=\frac{\length C_\upsilon}{d'}=
   \frac{\length C_{\O_K,\upsilon}}{ed'},
 \]
 where $d':=[F:F^\nr]$ is the ramification index of $F$ over $\Q_p$ and
 the lengths are meant as $W(\k)$-modules or, when applicable, as $\O_K$-modules
 (in fact, since $K$ is totally ramified over $K_0$, this does not really matter,
 as the only simple module is $\k$ in both cases).
 As a consequence, grouping the right-hand side of \eqref{key-clm1.5} to a sum over $\upsilon$,
 it suffices to show that for every embedding $\upsilon$ of $F^\nr$ in $K_0$ we have:
 \begin{equation}\label{key-clm2}
  \length\omega_{X^\vee\!,\upsilon}\le
   \sum_{\tau|\upsilon}\min\Set{\frac{\length C_{\O_K,\upsilon}}{d'},eh_\tau},
 \end{equation}
 where $\tau|\upsilon$ are the embeddings of $F$ in $K$ which restrict to $\upsilon$ on $F^\nr$.
 
 Fix $\upsilon\colon F^\nr\rightarrow K_0$ and, for the sake of brevity,
 set:
 \[
  \omega:=\omega_{H^\vee\!,\upsilon},\qquad\bar{\omega}:=\omega_{X^\vee\!,\upsilon},\qquad
  M:=M(H)_\upsilon,\qquad\bar{M}:=C_{\O_K,\upsilon}
 \]
 for the $\O_K\otimes_{\upsilon,\O_{F^\nr}}\O_F$-modules in the right square of \eqref{key-dgr}.
 Note that:
 \[
  M_K:=M\otimes_{\O_K}K=\bigoplus_{\tau|\upsilon}M(H)_{K,\tau},\qquad
  \omega_K:=\omega\otimes_{\O_K}K=\bigoplus_{\tau|\upsilon}\omega_{H^\vee\!,K,\tau}
 \]
 and this coincides with the decomposition described in \eqref{egn-dcp-ram}.
 We shall prove that for every subset
 $I\subseteq\Set{\tau\colon F\rightarrow K|\tau|\upsilon}$ we have:
 \[
  \length\bar{\omega}\le\frac{|I|}{d'}\length\bar{M}+\sum_{\tau|\upsilon,\tau\notin I}eh_\tau,
 \]
 where $|I|$ denotes the cardinality of $I$.
 
 Fix then such a subset $I$ and set $M_I:=M\cap\bigoplus_{\tau\in I}M(H)_{K,\tau}\subseteq M_K$;
 here, since $M$ is a torsion free $\O_K$-module, we have an inclusion $M\subseteq M_K$.
 Moreover, this inclusion induces an embedding of $M/M_I$ into
 $\bigoplus_{\tau|\upsilon,\tau\notin I}M(H)_{K,\tau}$.
 In particular, the $\O_K$-module~$M/M_I$ is torsion free,
 meaning that $M_I$ is a direct summand of $M$.
 In a similar fashion,
 set $\omega_I:=\omega\cap\bigoplus_{\tau\in I}\omega_{H^\vee\!,K,\tau}\subseteq\omega_K$,
 a direct summand of the free $\O_K$-module $\omega$ by the same argument as above.
 Observe, in addition, that $\omega_I\otimes_{\O_K}K=\bigoplus_{\tau\in I}\omega_{H^\vee\!,K,\tau}$,
 so that:
 \[
  \rk_{\O_K}\omega_I=\sum_{\tau\in I}h_\tau\qquad\text{and}
   \qquad\rk_{\O_K}\omega/\omega_I=\sum_{\tau|\upsilon,\tau\notin I}h_\tau.
 \]
 Denote by $\bar{M}_I$ the image of $M_I$ in $\bar{M}$ and
 by $\bar{\omega}_I$ the image of $\omega_I$ in $\bar{\omega}$.
 In order to estimate the length of $\bar{\omega}$, we are going to analyse separately
 the contributions of $\bar{\omega}_I$ and $\bar{\omega}/\bar{\omega}_I$.
 
 On the one hand, the surjection $\omega\twoheadrightarrow\bar{\omega}$ factors to
 $\omega/\omega_I\twoheadrightarrow\bar{\omega}/\bar{\omega}_I$.
 Since $\bar{\omega}/\bar{\omega}_I$ is a $p$-torsion module,
 the latter map factors further through $(\omega/\omega_I)/p(\omega/\omega_I)$.
 Thus:
 \[
  \length\bar{\omega}/\bar{\omega}_I\le\length(\omega/\omega_I)/p(\omega/\omega_I)=
   \length\O_K/p\O_K\cdot\rk_{\O_K}\omega/\omega_I=e\cdot\sum_{\tau|\upsilon,\tau\notin I}h_\tau.
 \]
 On the other hand, the injection $\bar{\omega}\hookrightarrow\bar{M}$ restricts to
 $\bar{\omega}_I\hookrightarrow\bar{M}_I$;
 indeed, because the map $\omega\rightarrow M$ is $\O_F$-linear, it sends $\omega_I$ to $M_I$.
 Hence, we have that $\length\bar{\omega}_I\le\length\bar{M}_I$.
 Recall now that the map $M\rightarrow\bar{M}$ identifies with
 the base change along $W(\k)\rightarrow\O_K$ of the surjective homomorphism of
 $W(\k)\otimes_{\upsilon,\O_{F^\nr}}\O_F$-modules
 $\D(H_\k)_\upsilon\rightarrow C_\upsilon$ from \eqref{key-2}.
 Recall, in addition, that $\D(H_\k)_\upsilon$ is finitely generated and free as a module over $W(\k)$.
 But $W(\k)\otimes_{\upsilon,\O_{F^\nr}}\O_F$ is again a discrete valuation ring,
 finite over $W(\k)$.
 Therefore, as a $W(\k)\otimes_{\upsilon,\O_{F^\nr}}\O_F$-module,
 $\D(H_\k)_\upsilon$ is still torsion free and, hence, free.
 In turn, being finitely generated and $p$-torsion,
 $C_\upsilon$ is a $W(\k)\otimes_{\upsilon,\O_{F^\nr}}\O_F$-module of finite length.
 We may then apply Lemma~\ref{key-ram} and deduce that:
 \[
  \length\bar{M}_I=\frac{|I|}{d'}\length\bar{M}.
 \]
 Altogether:
 \[
  \length\bar{\omega}=\length\bar{\omega}_I+\length\bar{\omega}/\bar{\omega}_I\le
   \length\bar{M}_I+e\cdot\sum_{\tau|\upsilon,\tau\notin I}h_\tau=
   \frac{|I|}{d'}\length\bar{M}+\sum_{\tau|\upsilon,\tau\notin I}eh_\tau
 \]
 and this concludes the proof of the proposition.
\end{proof}

\begin{ex}\label{ex-key}
 Let us take a closer look at a particular class of
 $p$-divisible groups with endomorphism structure for $\O_F$,
 namely that of $p$-divisible ``$\O_F$-modules'' (cf.\ \cite[3.57]{RZ}).
 In our context of base ring $\O_K$,
 this class is defined when $\O_K$ has the structure of an $\O_F$-algebra:
 it consists of objects~$(H,\iota)\in\pdiv_{\O_K,\O_F}$ such that
 the $\O_F$-action induced by $\iota$ on $\Lie(H)$ is via the structure map~$\O_F\rightarrow\O_K$;
 for instance, Lubin-Tate formal groups satisfy this condition.
 We warn the reader that the terminology ``$p$-divisible $\O_F$-module'' is used in
 \cite{MV} with reference to a general $p$-divisible group with endomorphism structure for $\O_F$.

 Now, if $(H,\iota)$ is a $p$-divisible $\O_F$-module over $\O_K$, say with $\height H=dn$,
 then $\Hdg(H,\iota)$ has only two slopes, namely $1/d$ and $0$.
 Indeed, pick a field extension~$K'$ of $K$ containing all embeddings~$\tau$ of $F$ in
 an algebraic closure of $K$ and consider the exact sequence of $K'$-vector-spaces:
 \[
  0\longrightarrow\omega_{H^\vee\!,K'}\longrightarrow M(H)_{K'}
   \longrightarrow\Lie(H)_{K'}\longrightarrow0,
 \]
 obtained as the base change of \eqref{Hdg-fil} along $\O_K\rightarrow K'$.
 This sequence carries a $\Q_p$-linear $F$-action induced by $\iota$ and
 splits as a direct sum of exact sequences:
 \[
  0\longrightarrow\omega_{H^\vee\!,K',\tau}\longrightarrow M(H)_{K',\tau}
   \longrightarrow\Lie(H)_{K',\tau}\longrightarrow0
 \]
 indexed by the embeddings $\tau$ of $F$ in $K'$,
 with $F$ acting through $\tau\colon F\rightarrow K'$ on the respective component.
 Recall that $\Hdg(H,\iota)$ is defined to be the average over $\tau$ of the types $f_\tau$ of
 the filtered vector spaces $(M(H)_{K',\tau},\Fil^\bullet M(H)_{K',\tau})$ given by
 $\Fil^0 M(H)_{K',\tau}=\omega_{H^\vee\!,K',\tau}$,
 with $\Fil^{-i} M(H)_{K',\tau}=M(H)_{K',\tau}$ and $\Fil^i M(H)_{K',\tau}=0$ for $i\ge1$.
 In our case, we have that $\Lie(H)_{K',\tau}=0$ for all $\tau$'s except for
 the embedding~$\tau_0$ corresponding to the structure map $\O_F\rightarrow\O_K$.
 Thus, $f_\tau=(0,\dots,0)\in\Q^n_+$ for all $\tau\neq\tau_0$ and
 $f_{\tau_0}=(1,\dots,1,0,\dots,0)\in\Q^n_+$,
 the number of $1$'s being equal to $\dim_{K'}\Lie(H)_{K',\tau_0}=\dim_{K'}\Lie(H)_{K'}=\dim H$.
 So we get:
 \[
  \Hdg(H,\iota)=\frac{1}{d}\sum_\tau f_\tau
   =\left(\frac{1}{d},\dots,\frac{1}{d},0,\dots,0\right)\in\Q^n_+,
 \]
 the number of $1/d$'s being again equal to $\dim H$.

 This particular case exemplifies the main difficulty inherent to the proof of the above proposition:
 here, it lies in the denominator $d$ of the first slope of $\Hdg(H,\iota)$.
 Indeed, in view of the statement of the proposition,
 this denominator means that the degree of any ($\iota$-stable) sub-$p$-group~$X'$ of
 $H[p]$ must be controlled by a factor $1/d$, compared to the height of $X'$ itself.
 Note that in absence of any endomorphism structure,
 this factor is normally $1$ (cf.\ \cite[Corollaire~2]{F1}).
 In our argument, the necessary control is achieved through the machinery of the preceding lemmas.

 The opposite situation to consider is when $\Lie(H)$ is free as an $\O_K\otimes_{\Z_p}\O_F$-module.
 Then, with notation as above, each component $\Lie(H)_{K',\tau}$ has the same dimension,
 so that $\Hdg(H,\iota)$ has only slopes $1$ and $0$,
 the number of $1$'s being equal to $\dim H/d=\rk_{\O_K\otimes_{\Z_p}\O_F}\Lie(H)$.
 In this case, the previous proposition simply recovers the analogous statement neglecting $\iota$.
\end{ex}

\clearpage

\section{Hodge-Newton filtration}
\thispagestyle{plain}

\subsection{Filtrations of \texorpdfstring{$p$}{p}-divisible groups via Harder-Narasimhan theory}

Let $(H,\iota)$ be a $p$-divisible group over $\O_K$ with endomorphism structure for $\O_F$ and
let $(\NF,\iota)$ be its associated filtered isocrystal over $K$ with coefficients in $F$,
via \eqref{pdv-fic-end}.
Suppose that $z=(x,y)$ is a break point of the Harder-Narasimhan polygon $\HN(H,\iota)=\HN(\NF,\iota)$.
By definition (and compatibility of the Harder-Narasimhan filtration with the $F$-action),
this corresponds to a subobject $(\NF_1,\iota_1)\subseteq(\NF,\iota)$ in $\Fil\Isoc_{K,F}^\wa$,
such that $\HN(\NF_1,\iota_1)$ is the restriction of $\HN(\NF,\iota)$ to $[0,x]$ and,
if $(\NF_2,\iota_2)$ denotes the quotient of $(\NF,\iota)$ by $(\NF_1,\iota_1)$,
then $\HN(\NF_2,\iota_2)$ is the rest of $\HN(\NF,\iota)$ after $z$,
up to a shift of coordinates setting the origin in $z$, i.e.:
\[
 \HN(\NF_2,\iota_2)\colon t\mapsto\HN(\NF,\iota)(t+x)-y,\qquad t\in[0,\height H/d-x].
\]
A crucial question for our purpose is the following:
when is $(\NF_1,\iota_1)$ the filtered isocrystal with coefficients in $F$
associated to an $\iota$-stable sub-$p$-divisible group $H_1$ of $H$?

For instance, this is the case when $H$ is ``of HN type'', that is,
when $\HN(H)=\HN(H[p^i])$ for all $i\ge1$ (cf.\ \cite[\S 2.3]{F2}).
In this situation, the Harder-Narasimhan filtrations of the $p$-groups $H[p^i]$ build up to
a filtration of $H$ by sub-$p$-divisible groups,
which are $\iota$-stable by functoriality of the former filtrations.
Taking the associated filtered isocrystals, we obtain the Harder-Narasimhan filtration of $\NF$.
More generally, there may exist a sub-$p$-divisible group $H_1$ of $H$ satisfying our requirements,
even without the $p$-groups $H_1[p^i]$ being necessarily part of
the Harder-Narasimhan filtration of $H[p^i]$.
In fact, because the valuation of $K$ is discrete, \cite[Théorème~4]{F2} grants that
every $p$-divisible group $H$ over $\O_K$ is isogenous to some $H'$ of HN type;
a closer look at the proof of the statement, i.e.\ the algorithm in \cite[\S 3]{F2},
reveals that this can be made compatible with an $\O_F$-action.
If $e<p-1$, then, we know from \cite[3.3.6]{R} that the kernel and the cokernel of
every map in $\pgr_{\O_K}$ are again $p$-groups (that is, they are flat over $\O_K$).
This allows us to take the image, through any $\O_F$-equivariant isogeny $H'\rightarrow H$,
of the filtration of $H'$ obtained as above;
the result will give, once more, a filtration of $H$ as desired.
Without putting any restriction on $e$, instead, a sufficient condition answering our question
can be found in the configuration of $\HN(H[p])$, relatively to $\HN(H)$.
The next proposition goes in this direction,
moving from a similar procedure as the mentioned algorithm,
but let us first make some further observations.

\begin{rmk}\phantomsection\label{rmk-sub-pdv}
\begin{enumerate}
 \item
 The property required in the question characterises $H_1$ uniquely among
 the $\iota$-stable sub-$p$-divisible groups of $H$.
 Indeed, if $H_1'$ is another candidate, then the identity of $(\NF_1,\iota_1)$ corresponds,
 by fully faithfulness of \eqref{pdv-fic-end}, to morphisms between $H_1$ and $H_1'$ that
 are compatible with the inclusion in $H$, so we must have $H_1'=H_1$.
 Furthermore, the composition of functors
 $\pdiv_{\O_K}\rightarrow\pdiv_{\O_K}\otimes\Q_p\rightarrow\Fil\Isoc_K^{\wa,[-1,0]}$
 sends exact sequences of $p$-divisible groups to exact sequences of filtered isocrystals;
 this can be checked formally, considering that the target is an abelian category and that
 the induced maps on the hom-sets are just given by inverting $p$ in finite free $\Z_p$-modules
 (recall that the second functor is an equivalence of categories, cf.\ Remark~\ref{rmk-pdv-fic}).
 In particular, if $H_2$ denotes the quotient of $H$ by $H_1$,
 with induced $\O_F$-action $\iota_2$, then $(\NF_2,\iota_2)$ is
 the filtered isocrystal with coefficients in $F$ associated to $(H_2,\iota_2)$.
 
 \item
 Suppose now that $H_1$ is an $\iota$-stable sub-$p$-divisible group of $H$ with
 the property that $(\height H_1/d,\dim H_1/d)=z$ and
 denote by $\iota_1$ the restriction of $\iota$ to $H_1$.
 Let $(\NF',\iota')$ be the filtered isocrystal with coefficients in $F$ associated to
 $(H_1,\iota_1)$; this is a subobject of $(\NF,\iota)$ in $\Fil\Isoc_{K,F}^\wa$.
 Then, the Harder-Narasimhan polygon of $(\NF',\iota')$ lies below $\HN(\NF,\iota)$;
 moreover, its end point is $z$, so that $\HN(\NF',\iota')\le\HN(\NF_1,\iota_1)$.
 This implies that the minimal slope of $\HN(\NF',\iota')$ is at least that of
 $\HN(\NF_1,\iota_1)$,
 which in turn is strictly greater than the maximal slope of $\HN(\NF_2,\iota_2)$.
 By functoriality of the Harder-Narasimhan filtration,
 it follows that $(\NF',\iota')\subseteq(\NF_1,\iota_1)$.
 However, since the underlying isocrystals have the same height $dx$,
 this is in fact an equality.
 Thus, the condition that $(\height H_1/d,\dim H_1/d)=z$ is enough to ensure that
 $(\NF_1,\iota_1)$ is the filtered isocrystal with coefficients in $F$ associated to
 $(H_1,\iota_1)$.
\end{enumerate}
\end{rmk}

\begin{prp}\label{sub-pdv}
 Let $(H,\iota)$ be a $p$-divisible group over $\O_K$ with endomorphism structure for $\O_F$ and
 let $(\NF,\iota)$ be its associated filtered isocrystal over $K$ with coefficients in $F$.
 Suppose that $z=(x,y)$ is a break point of $\HN(H,\iota)=\HN(\NF,\iota)$ and let
 $(\NF_1,\iota_1)\subseteq(\NF,\iota)$ be the corresponding subobject in $\Fil\Isoc_{K,F}^\wa$.
 If $z$ lies on $\HN(H[p],\iota)$,
 then there exists a unique $\iota$-stable sub-$p$-divisible group $H_1$ of $H$ whose
 associated filtered isocrystal with coefficients in $F$ is $(\NF_1,\iota_1)$.
\end{prp}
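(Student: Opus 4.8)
The plan is to reduce at once, by Remark~\ref{rmk-sub-pdv}, to the construction of \emph{some} $\iota$-stable sub-$p$-divisible group $H_1\subseteq H$ with $(\height H_1/d,\dim H_1/d)=z$: granting this, that Remark shows both that $H_1$ is the unique $\iota$-stable sub-$p$-divisible group with the required property and that its associated filtered isocrystal with coefficients, as a subobject of $(\NF,\iota)$, is exactly $(\NF_1,\iota_1)$. So from here on the only task is to produce $H_1$ as a closed subgroup scheme of $H$ of the prescribed height and dimension.

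Next I would set up the candidate at finite level. Since \eqref{pdv-fic-end} is an equivalence, the inclusion $(\NF_1,\iota_1)\hookrightarrow(\NF,\iota)$ comes from a monomorphism in $\pdiv_{\O_K,\O_F}\otimes F$; clearing denominators and dividing $G_1$ by the schematic closure of the kernel of its generic fibre, this is represented by an honest $\O_F$-equivariant morphism $f\colon G_1\to H$ of $p$-divisible groups over $\O_K$ whose generic fibre $f_\eta$ is a closed immersion onto an $\iota$-stable sub-$p$-divisible group $H_{1,\eta}\subseteq H_\eta$, with $\height H_{1,\eta}=dx$ and $\dim G_1=dy$. For $i\ge1$ let $X_i\subseteq H[p^i]$ be the schematic closure of $H_{1,\eta}[p^i]$; these are $\iota$-stable finite flat closed sub-$p$-groups of $H[p^i]$, with $X_i\subseteq X_{i+1}$, $\height X_i=idx$, and each $X_i$ equal to the schematic closure (``flat hull'') of the generically finite subgroup $X_{i+1}[p^i]$. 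On the polygon side, Proposition~\ref{HN-lim} gives $\HN(H,\iota)\le\HN^\r(H[p^{ki}],\iota)\le\HN^\r(H[p^i],\iota)$ for all $i,k\ge1$ and $\HN(H,\iota)=\inf_i\HN^\r(H[p^i],\iota)$; since $z=(x,y)$ lies both on $\HN(H,\iota)$ and, by hypothesis, on $\HN^\r(H[p],\iota)=\HN(H[p],\iota)$, sandwiching forces $\HN^\r(H[p^i],\iota)(x)=y$, i.e.\ $(ix,iy)\in\HN(H[p^i],\iota)$, for \emph{every} $i\ge1$. As $\HN(H[p^i],\iota)$ is the concave envelope of the points $(\height X'/d,\deg X'/d)$ over $\iota$-stable sub-$p$-groups $X'$ of $H[p^i]$, this yields $\deg X_i\le idy$ for all $i$.

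The heart of the matter is to upgrade this to the equalities $X_i=X_{i+1}[p^i]$ for all $i$; equivalently, applying Fargues's inequality $\deg X\le\deg X'+\deg X''$ (with equality iff exactness, \cite[Corollaire~3]{F1}) to the generically exact sequences $0\to X_i\to X_{i+1}\xrightarrow{p^i}X_1\to 0$, it suffices to prove $\deg X_i=i\deg X_1=idy$ for all $i$. Once this is known, $(X_i)_{i\ge1}$ is a $p$-divisible group $H_1\subseteq H$, which is $\iota$-stable because each $X_i$ is, and has $\dim H_1=\deg H_1[p]=\deg X_1=dy$, so $(\height H_1/d,\dim H_1/d)=z$ and the first paragraph concludes. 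To prove $\deg X_i=idy$ I would follow the argument underlying \cite[Theorem~5.4]{XS}: when $z$ happens to be a break point of $\HN^\r(H[p^{i_0}],\iota)$ for some $i_0\ge1$, the same sandwiching as above makes $z$ a break point of $\HN^\r(H[p^{ki_0}],\iota)$ for every $k$, so the term of the Harder-Narasimhan filtration of $H[p^{ki_0}]$ sitting over $z$ is an $\iota$-stable sub-$p$-group of height $ki_0xd$ whose degree is \emph{pinned} to $ki_0yd$ (it is a vertex of the envelope); combining this with the bound $\deg X_{ki_0}\le ki_0yd$ and the uniqueness of such extremal subgroups identifies it with $X_{ki_0}$, after which the degree-additivity above forces all the relevant sequences to be exact. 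For the general case — $z$ a break point of $\HN(H,\iota)$ lying on $\HN(H[p],\iota)$ but possibly not a break point at any finite level — I would run the modification algorithm of \cite[\S 3]{F2} in an $\O_F$-equivariant way: it replaces $H$ by an $\O_F$-equivariantly isogenous $\widetilde H$ for which $z$ \emph{does} become a break point at some finite level, one applies the previous case to $\widetilde H$, and transports the resulting sub-$p$-divisible group back along the isogeny (adjusting by a further isogeny to make the kernels compatible); the termination of the algorithm and the fact that it does not disturb the part of the polygon up to $z$ both rest on the discreteness of the valuation of $K$ together with the identity $\HN^\r(H[p^i],\iota)(x)=y$ secured above.

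The step I expect to be the genuine obstacle is exactly the equality $\deg X_i=idy$ for all $i$, equivalently the flatness of $X_{i+1}[p^i]$. Without the hypothesis ``$z$ lies on $\HN(H[p],\iota)$'' one controls $\deg X_i$ only from below, and the equality can genuinely fail: this is the same phenomenon that makes $\pdiv_{\O_K}$ fail to be abelian when $e\ge p-1$, namely that a sub-$p$-divisible group of the generic fibre $H_\eta$ need not extend to a sub-$p$-divisible group of $H$ over $\O_K$. The content of the proposition is that the hypothesis — through the rigidity it imposes on the family $\bigl(\HN^\r(H[p^i],\iota)\bigr)_i$ via Proposition~\ref{HN-lim} — forces the schematic closures $X_i$ to be ``already flat at every truncation level'', and hence to assemble into the desired $H_1$.
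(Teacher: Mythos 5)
Your high-level plan is sound --- reduce via Remark~\ref{rmk-sub-pdv} to producing an $\iota$-stable sub-$p$-divisible group of the prescribed height and dimension, pass to finite level, and identify the entire content as the equality $\deg X_i=idy$ --- and your preliminary steps (the sandwich $\HN(H,\iota)\le\HN^\r(H[p^i],\iota)\le\HN(H[p],\iota)$ forcing $(ix,iy)$ onto $\HN(H[p^i],\iota)$, hence $\deg X_i\le idy$; the equivalence of $\deg X_i=idy$ with exactness of $0\to X_i\to X_{i+1}\to X_1\to0$) are all correct. Where the proposal runs into trouble is the reverse inequality $\deg X_i\ge idy$. In the case where $z$ is a break point of some $\HN^\r(H[p^{i_0}],\iota)$, your sandwiching observation does correctly propagate the break to every $\HN^\r(H[p^{ki_0}],\iota)$, and the corresponding Harder--Narasimhan filtration term $G_{ki_0}$ has the pinned degree $ki_0yd$; but the step identifying $G_{ki_0}$ with the schematic closure $X_{ki_0}$ by ``uniqueness of extremal subgroups'' is circular: that uniqueness would apply only after one already knows $\deg X_{ki_0}=ki_0yd$, which is what you are trying to prove. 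You would need an independent argument that $G_{ki_0,\eta}=X_{ki_0,\eta}$; this is a recoverable gap (it is implicit in \cite[5.4]{XS}), but not the one-line deduction you sketch.

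The genuine obstacle is the general case, when $z$ is not a break point of any $\HN^\r(H[p^i],\iota)$ --- a configuration that really does occur (Remark~\ref{rmk-sub-pdv-prf} draws a picture of it). Your proposal to replace $H$ by an $\O_F$-equivariantly isogenous $\widetilde{H}$ for which $z$ becomes a break at finite level and then ``transport back along the isogeny'' fails here: pushing a sub-$p$-divisible group forward along an isogeny $g\colon\widetilde{H}\to H$ requires forming $\widetilde{H}_1/(\widetilde{H}_1\cap\ker g)$, and the schematic intersection $\widetilde{H}_1\cap\ker g$ is known to be flat over $\O_K$ only when $e<p-1$ (cf.\ \cite[3.3.6]{R}); without that, the image is not a $p$-divisible group. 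The paper discusses this exact point in the paragraph preceding the proposition and designs the proof to avoid it: instead of schematic closures of height pinned at $idx$, it takes the Harder--Narasimhan filtration term $G_i$ at a nearby break point $z_i$ of $\HN^\r(H[p^i],\iota)$ selected by a slope threshold $\mi$ (after a reduction by duality that controls $\mi$), proves $G_i=G_j[p^i]$ directly from Fargues's degree/exactness criterion together with a concavity argument, and assembles $H_1$ as a $p^{i_0}$-shifted quotient of the system $(G_i)$ once the height increments $\height G_{i+1}-\height G_i$ stabilize. This works uniformly, with no case split and no restriction on $e$.
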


\begin{proof}
 We first reduce to the following situation:
 either $z$ is a break point of $\HN(H[p],\iota)$ or the slope of $\HN(H[p],\iota)$ at $z$ is
 strictly greater than the first slope of $\HN(H,\iota)$ after $z$.
 
 If none of these is the case, then,
 since $\HN(H,\iota)\le\HN(H[p],\iota)$ by Proposition~\ref{HN-lim} and
 $z$ is a break point of $\HN(H,\iota)$, the slope of $\HN(H[p],\iota)$
 at $z$ must be strictly less than the last slope of $\HN(H,\iota)$ before $z$.
 Consider then the dual $p$-divisible group $H^\vee$,
 with the dual action $\iota^\vee$ induced by $\iota$.
 Recalling the compatibility of the Harder-Narasimhan polygon with respect to duality
 (cf.\ Remark~\ref{HN-vee-pdv}), we see that $(H^\vee\!,\iota^\vee)$ satisfies the assumptions of
 the proposition at the point $z^\vee=(x^\vee\!,y^\vee)$ given by:
 \begin{align*}
  x^\vee &=\height H/d-x, \\
  y^\vee &=\height H/d-x+\HN(H,\iota)(x)-\dim H/d=\dim H^\vee\!/d-x+y.
 \end{align*}
 Moreover, the slope of $\HN(H[p]^\vee\!,\iota^\vee)$ at $z^\vee$ is
 strictly greater than the first slope of $\HN(H^\vee\!,\iota^\vee)$ after $z^\vee$,
 so we are in the situation described above.
 
 Suppose now that we find an $\iota^\vee$-stable sub-$p$-divisible group $H'_1$ of $H^\vee$
 as claimed in the statement relative to $(H^\vee\!,\iota^\vee)$ and $z^\vee$.
 Then, $H_1:=(H^\vee\!/H'_1)^\vee$ is an $\iota$-stable sub-$p$-divisible group of $H$, with:
 \[
  \height H_1=\height H-\height H'_1=\height H-dx^\vee=dx
 \]
 and:
 \begin{align*}
  \dim H_1 &=\height H_1-\dim H_1^\vee \\
   &=dx-\dim H^\vee+\dim H'_1=dx-\dim H^\vee+dy^\vee=dy.
 \end{align*}
 By the previous remark, this is enough to prove the proposition for $(H,\iota)$.
 Thus, by possibly passing to the dual $p$-divisible group, we may assume that we are in the
 situation described above.
 
 \bigskip\noindent
 Let us now define a sequence of $\iota$-stable closed sub-$p$-groups $G_i$ of $H[p^i]$,
 for $i\ge1$; these will form a first approximation of $H_1$.
 
 Since $\HN(H,\iota)$ and $\HN(H[p],\iota)$ share the point $z$ and
 they meet at their end point, there must be a break point of
 $\HN(H[p],\iota)$ at $z$ or after (by the previous reduction step);
 let $z_1=(x_1,y_1)$ be the first such point (possibly $z_1=z$).
 We denote by $\mi$ the last slope of $\HN(H[p],\iota)$ before $z_1$,
 so that $\mi$ is strictly greater than the first slope of $\HN(H,\iota)$ after~$z$.
 Recall that, by Proposition~\ref{HN-lim}, we have:
 \[
  \HN(H,\iota)\le\HN^\r(H[p^i],\iota)\le\HN(H[p],\iota)
 \]
 for every $i\ge1$.
 In particular, $z$ lies on every polygon $\HN^\r(H[p^i],\iota)$,
 which then has both a slope valued at least $\mi$ (before $z$) and a slope with
 value strictly less than $\mi$ (as it meets the other polygons at the end point).
 For $i\ge1$, let $z_i=(x_i,y_i)$ be the break point of $\HN^\r(H[p^i],\iota)$ such that
 all its slopes before $z_i$ are greater than or equal to $\mi$ and
 all its slopes after $z_i$ are strictly less than $\mi$;
 then, $x\le x_i\le x_1$ and $z_i$ lies on $\HN(H[p],\iota)$,
 i.e.\ $\HN(H[p],\iota)(x_i)=y_i$.
 Indeed, if the slope of $\HN^\r(H[p^i],\iota)$ right after $z$ is strictly less than $\mu$,
 then $z_i=z$.
 Otherwise, it means that both $\HN^\r(H[p^i],\iota)$ and $\HN(H[p],\iota)$ proceed with
 slope~$\mu$ after $z$, the former until $z_i$ and the latter until $z_1$;
 but since $\HN^\r(H[p^i],\iota)$ lies below the other polygon, then we must have that $x_i\le x_1$,
 i.e.\ $z_i$ must lie on the segment between $z$ and $z_1$ (possibly $z_i=z_1$).
 Note that in particular, unless $z_i=z$,
 the slope of $\HN^\r(H[p^i],\iota)$ between $x$ and $x_i$ is $\mu$,
 which we recall being strictly greater than the slope of $\HN(H,\iota)$ on the same interval.
 Thus, because the polygons~$\HN^\r(H[p^i],\iota)$ converge uniformly to
 $\HN(H,\iota)$ as $i\to\infty$ (by Proposition~\ref{HN-lim} again),
 we also have that $z=\lim_{i\to\infty}z_i$.

 For $i\ge1$, let $G_i\subseteq H[p^i]$ be the $\iota$-stable closed sub-$p$-group
 corresponding to the break point $z_i$ of $\HN^\r(H[p^i],\iota)$;
 by definition, $\height G_i=idx_i$ and $\deg G_i=idy_i$.
 For every other index $j>i$, the choice of $z_i$ implies that
 the minimal slope of $G_i$ is at least $\mi$, which in turn, by the choice of $z_j$,
 is strictly greater than the maximal slope of $H[p^j]/G_j$.
 By functoriality of the Harder-Narasimhan filtration (see also \cite[Proposition~8]{F1}),
 the closed embedding $H[p^i]\rightarrow H[p^j]$ restricts to $G_i\rightarrow G_j$;
 similarly, the map $p^i\colon H[p^j]\rightarrow H[p^{j-i}]$ restricts to
 $p^i\colon G_j\rightarrow G_{j-i}$.
 
 \bigskip\noindent
 We claim that, for $j>i\ge1$, we have:
 \[
  G_i=G_j[p^i]:=\Ker\left(p^i\colon G_j\rightarrow G_{j-i}\right).
 \]
 
 Consider the restriction $p^i_\eta\colon G_{j,\eta}\rightarrow G_{j-i,\eta}$ of the map
 $p^i\colon G_j\rightarrow G_{j-i}$ to the generic fibre (that is, its base change to $K$).
 Let $\c$ be the schematic closure in $G_j$ of the kernel $\Ker(p^i_\eta)$ and
 let $\d$ be the schematic closure in $G_{j-i}$ of the image $\Ima(p^i_\eta)$.
 Note that $G_{i,\eta}\subseteq\Ker(p^i_\eta)\subseteq H[p^i]_\eta$,
 which gives the sequence of closed embeddings
 $G_i\subseteq\c\subseteq H[p^i]$ of $p$-groups over $\O_K$.
 Moreover, the map $p^i\colon G_j\rightarrow G_{j-i}$ factors through
 the closed sub-$p$-group $\d\subseteq G_{j-i}$ and we have a sequence:
 \begin{equation}\label{xct-G_j}
  0\longrightarrow\c\xlongrightarrow{u}G_j\xlongrightarrow{p^i}\d\longrightarrow0
 \end{equation}
 of $p$-groups over $\O_K$, with $u$ a closed embedding, $p^i\circ u=0$ and such that
 $p^i$ induces an isomorphism $G_{j,\eta}/\c_\eta\xrightarrow{\sim}\d_\eta$ on
 the generic fibre, as $\c_\eta=\Ker(p^i_\eta)$ and $\d_\eta=\Ima(p^i_\eta)$.
 Thus:
 \begin{equation}\label{ht-G_j}
  \height G_j=\height\c+\height\d
 \end{equation}
 and:
 \begin{equation}\label{deg-G_j}
  \deg G_j\le\deg\c+\deg\d,
 \end{equation}
 with equality if and only if the sequence~\eqref{xct-G_j} is exact (cf.\ \cite[Corollaire~3]{F1}).
 Now, because $\c$ and $\d$ are closed sub-$p$-groups of $H[p^i]$ and $H[p^{j-i}]$ respectively,
 we have:
 \begin{equation}\label{deg-c}
 \begin{aligned}
  \deg\c &\le\HN(H[p^i])(\height\c)=
   id\HN^\r(H[p^i],\iota)(\height\c/id) \\
  &\le id\HN(H[p],\iota)(\height\c/id)
 \end{aligned}
 \end{equation}
 and:
 \begin{equation}\label{deg-d}
 \begin{aligned}
  \deg\d &\le\HN(H[p^{j-i}])(\height\d)=
   (j-i)d\HN^\r(H[p^{j-i}],\iota)(\height\d/(j-i)d) \\
  &\le(j-i)d\HN(H[p],\iota)(\height\d/(j-i)d).
 \end{aligned}
 \end{equation}
 Altogether, using the concavity of $\HN(H[p],\iota)$ and \eqref{ht-G_j}:
 \begin{equation}\label{deg-G_j-2}
 \begin{aligned}
  \deg\c+\deg\d &\le jd\left(\frac{i}{j}
   \HN(H[p],\iota)(\height\c/id)+\frac{j-i}{j}\HN(H[p],\iota)(\height\d/(j-i)d)\right) \\
  &\le jd\HN(H[p],\iota)(\height\c/jd+\height\d/jd) \\
  &=jd\HN(H[p],\iota)(\height G_j/jd) \\
  &=jd\HN(H[p],\iota)(x_j) \\
  &=jdy_j=\deg G_j.
 \end{aligned}
 \end{equation}
 Hence, we have equality in \eqref{deg-G_j} and the sequence~\eqref{xct-G_j} is exact.
 In particular, $\c=G_j[p^i]$; we remark that this already implies the flatness of $G_j[p^i]$.
 Since $G_i$ is a closed sub-$p$-group of $\c$, it suffices to show that
 $\height\c\le\height G_i$ in order to conclude that $G_i=\c=G_j[p^i]$.
 
 Note that, as a consequence of the previous argument, we have equality all over in
 \eqref{deg-G_j-2} and hence in \eqref{deg-c} and \eqref{deg-d} as well.
 In particular, the polygon $\HN(H[p],\iota)$ is a straight line between
 $\height\d/(j-i)d$ and $\height\c/id$, with $x_j$ lying in the interior of this segment,
 unless the three points coincide.
 
 We first show that $\height\c/id\le x_1$.
 Assume by contradiction that $\height\c/id>x_1$.
 Since $\d$ is a closed sub-$p$-group of $G_{j-i}$, we have:
 \[
  \height\d/(j-i)d\le\height G_{j-i}/(j-i)d=x_{j-i}\le x_1<\height\c/id,
 \]
 with $z_1=(x_1,y_1)$ a break point of $\HN(H[p],\iota)$.
 But this polygon is a straight line on $[\height\d/(j-i)d,\height\c/id]$,
 so we must have $\height\d/(j-i)d=x_1$.
 However, this would contradict $x_j$ being in the interior of the mentioned segment,
 as $x_j\le x_1$.
 
 We can now show that $\height\c\le\height G_i$ or, equivalently, that $\height\c/id\le x_i$.
 Assume by contradiction that $\height\c/id>x_i$.
 The polygons $\HN(H[p],\iota)$ and $\HN^\r(H[p^i],\iota)$ share the point $z_i=(x_i,y_i)$;
 however, the slope of $\HN(H[p],\iota)$ on $[x_i,\height\c/id]$ is at least $\mi$
 (as $\height\c/id\le x_1$), whereas the slope of $\HN^\r(H[p^i],\iota)$ on the same segment is
 strictly less than $\mi$ (by definition of $z_i$).
 Therefore, $\HN^\r(H[p^i],\iota)(\height\c/id)<\HN(H[p],\iota)(\height\c/id)$,
 contradicting the fact that we have an equality in \eqref{deg-c}.
 This proves that $\height\c\le\height G_i$ and hence that $G_i=\c=G_j[p^i]$ as claimed.
 
 \bigskip\noindent
 We now proceed to refining the sequence of $p$-groups $G_i$, $i\ge1$,
 to an $\iota$-stable sub-$p$-divisible group $H_1=(K_i)_{i\ge1}$ of $H$,
 with $(\height H_1/d,\dim H_1/d)=z$.
 Given the observations of Remark~\ref{rmk-sub-pdv},
 this is sufficient in order to conclude the proof of the proposition.
 
 By the previous step, multiplication by $p$ induces closed embeddings:
 \[
  p\colon G_{i+1}/G_i\longrightarrow G_i/G_{i-1}
 \]
 for $i>1$.
 Thus, the sequence of numbers
 $a_i:=\height G_{i+1}/G_i=\height G_{i+1}-\height G_i$, $i\ge1$, is nonincreasing;
 since we are talking about natural numbers,
 there exists an index $i_0\ge1$ such that $a_i=a_{i_0}$ for every $i\ge i_0$.
 We obtain the following formula:
 \[
  \height G_i=\height G_{i_0}+(i-i_0)a_{i_0}=ia_{i_0}+\height G_{i_0}-i_0a_{i_0}
 \]
 for every $i\ge i_0$.
 Then:
 \[
  x_i=\frac{\height G_i}{id}=\frac{a_{i_0}}{d}+\frac{\height G_{i_0}-i_0a_{i_0}}{id}
   \xlongrightarrow[i\to\infty]{}\frac{a_{i_0}}{d}.
 \]
 At the same time, we already observed that the sequence of points $z_i=(x_i,y_i)$, $i\ge1$,
 converges to $z=(x,y)$ as $i\to\infty$; hence, $a_{i_0}/d=x$.
 
 For $i\ge1$, we define the following $p$-group over $\O_K$:
 \[
  K_i:=G_{i+i_0}/G_{i_0}.
 \]
 Note that:
 \[
  \height K_i=\height G_{i+i_0}-\height G_{i_0}=ia_{i_0}=idx.
 \]
 Moreover, because $x_{i_0},x_{i+i_0}\in[x,x_1]$ and, unless $x=x_1$, the polygon
 $\HN(H[p],\iota)$ is a straight line (of slope $\mi$) on this segment, we also have:
 \begin{align*}
  \deg K_i &=\deg G_{i+i_0}-\deg G_{i_0}=(i+i_0)dy_{i+i_0}-i_0dy_{i_0} \\
   &=id\left(\frac{i+i_0}{i}\HN(H[p],\iota)(x_{i+i_0})-
    \frac{i_0}{i}\HN(H[p],\iota)(x_{i_0})\right) \\
   &=id\HN(H[p],\iota)\left(\frac{i+i_0}{i}x_{i+i_0}-\frac{i_0}{i}x_{i_0}\right) \\
   &=id\HN(H[p],\iota)(x)=idy.
 \end{align*}
 For $j>i\ge1$, we have obvious closed embeddings $K_i\rightarrow K_j$
 which induce the identification $K_i=K_j[p^i]$.
 Thus, the family $(K_i)_{i\ge1}$ of $p$-groups over $\O_K$ is a $p$-divisible group $H_1$
 of height $dx$ and dimension $dy$, with $H_1[p^i]=K_i$.
 Finally, for $i\ge1$, the map:
 \[
  p^{i_0}\colon K_i=G_{i+i_0}/G_{i_0}\longrightarrow G_i\subseteq H[p^i]
 \]
 is a closed embedding, which defines an $\iota$-stable closed sub-$p$-group of $H[p^i]$.
 Therefore, $H_1$ is an $\iota$-stable sub-$p$-divisible group of $H$,
 with $(\height H_1/d,\dim H_1/d)=(x,y)=z$.
 This concludes the proof of the proposition.
\end{proof}

\begin{rmk}\label{rmk-sub-pdv-prf}
 The proof of the previous proposition follows a similar procedure to
 the first step of the algorithm in \cite[\S 3]{F2},
 with the difference that here we ``jump'' to a given break point $z$ of the Harder-Narasimhan polygon.
 The crucial point is to obtain a family of closed sub-$p$-groups $G_i$ of $H[p^i]$,
 with the property that $G_i=G_j[p^i]$ for $j>i\ge1$.
 In order to show that this property holds for the constructed family,
 we used a similar argument to that in the proof of \cite[5.4]{XS}.
 Here, we also deal with the case that the polygon $\HN^\r(H[p^i])$ does not have a break point at
 $z$ for any index $i\ge1$, as for instance in the following configuration (not to scale).
 \[
 \begin{tikzpicture}[scale=0.7]
  \draw[->] (-0.5,0) -- (12.5,0);
  \draw[->] (0,-0.5) -- (0,6.5);
  
  \draw[thick] (0,0)  -- (4,4) node[anchor=south] {$z$};
  \draw[thick] (4,4)  -- (12,6);
  
  \draw[thick] (0,0)  -- (1,2);
  \draw[thick] (1,2)  -- (2,3);
  \draw[thick] (2,3)  -- (6,5) node[anchor=south] {$z_i$};
  \draw[thick] (6,5)  -- (10,6);
  \draw[thick] (10,6) -- (12,6);

  \draw[help lines] (4,-0.5) -- (4,4);
  \draw[help lines] (6,-0.5) -- (6,5);
  
  \draw[->] (5.75,-0.375) -- node[below, scale=0.75] {$i\to\infty$} (4.25,-0.375);
  
  \draw[gray, *-] (7,4.75) -- (12.625,4.75) node[black, anchor=west] {$\HN(H)$};
  \draw[gray, *-] (8,5.5) -- (12.375,5.5) node[black, anchor=west] {$\HN^\r(H[p^i])$};
 \end{tikzpicture}
 \]
\end{rmk}

The previous proposition has its own relevance for the study of
$p$-divisible groups over $\O_K$ via Harder-Narasimhan theory.
Let us make this more explicit by reformulating it in a more self-contained fashion.

\begin{cor}\label{cor-sub-pdv}
 Let $(H,\iota)$ be a $p$-divisible group over $\O_K$ with endomorphism structure for $\O_F$.
 Suppose that $z$ is a break point of $\HN(H,\iota)$ which also lies on $\HN(H[p],\iota)$.
 Then, there exists a unique $\iota$-stable sub-$p$-divisible group $H_1$ of $H$ such that,
 if $\iota_1$ denotes the restriction of $\iota$ to $H_1$,
 then $\HN(H_1,\iota_1)$ equals the part of $\HN(H,\iota)$ between the origin and $z$.
 Furthermore, if $H_2$ denotes the quotient of $H$ by $H_1$, with induced $\O_F$-action $\iota_2$,
 then $\HN(H_2,\iota_2)$ equals the rest of $\HN(H,\iota)$ after $z$
 (up to a shift of coordinates setting the origin in $z$).
\end{cor}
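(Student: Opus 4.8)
The plan is to deduce the statement directly from Proposition~\ref{sub-pdv} and the observations collected in Remark~\ref{rmk-sub-pdv}; the hypotheses here are precisely a self-contained rephrasing of those of the proposition. First I would let $(\NF,\iota)$ be the weakly admissible filtered isocrystal over $K$ with coefficients in $F$ associated to $(H,\iota)$ via \eqref{pdv-fic-end}. Since $\HN(H,\iota)=\HN(\NF,\iota)$ by definition, $z=(x,y)$ is a break point of $\HN(\NF,\iota)$, so by functoriality of the Harder-Narasimhan filtration it corresponds to a subobject $(\NF_1,\iota_1)\subseteq(\NF,\iota)$ in $\Fil\Isoc_{K,F}^{\wa}$ whose Harder-Narasimhan polygon is the restriction of $\HN(\NF,\iota)$ to $[0,x]$, while the quotient $(\NF_2,\iota_2)$ has $\HN(\NF_2,\iota_2)$ equal to the rest of $\HN(\NF,\iota)$ after $z$, with the formula displayed in the statement (this is exactly the discussion preceding Proposition~\ref{sub-pdv}).

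Next, because $z$ lies on $\HN(H[p],\iota)$, Proposition~\ref{sub-pdv} applies and produces an $\iota$-stable sub-$p$-divisible group $H_1$ of $H$ whose associated filtered isocrystal with coefficients in $F$ is $(\NF_1,\iota_1)$. Writing $\iota_1$ for the restriction of $\iota$ to $H_1$, we then have $\HN(H_1,\iota_1)=\HN(\NF_1,\iota_1)$, which is the restriction of $\HN(H,\iota)$ to $[0,x]$, as required. For the quotient statement I would invoke the first part of Remark~\ref{rmk-sub-pdv}: the composite $\pdiv_{\O_K,\O_F}\to\pdiv_{\O_K,\O_F}\otimes F\to\Fil\Isoc_{K,F}^{\wa,[-1,0]}$ is exact, so the filtered isocrystal with coefficients in $F$ associated to $H_2:=H/H_1$, with induced action $\iota_2$, is the quotient $(\NF_2,\iota_2)$; hence $\HN(H_2,\iota_2)=\HN(\NF_2,\iota_2)$ is the rest of $\HN(H,\iota)$ after $z$.

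Finally, for uniqueness I would argue as follows. If $H_1$ is any $\iota$-stable sub-$p$-divisible group of $H$ with $\HN(H_1,\iota_1)$ equal to the restriction of $\HN(H,\iota)$ to $[0,x]$, then its end point is $z$, so $(\height H_1/d,\dim H_1/d)=z$; by Remark~\ref{rmk-sub-pdv} this already forces the filtered isocrystal with coefficients in $F$ associated to $(H_1,\iota_1)$ to be $(\NF_1,\iota_1)$, and then fully faithfulness of \eqref{pdv-fic-end} (again as in Remark~\ref{rmk-sub-pdv}) shows that $H_1$ is unique. I do not expect any genuine obstacle in this argument: it is essentially a translation, and all the substantive work — in particular the bookkeeping with the renormalised polygons $\HN^\r(H[p^i],\iota)$ and the key identity $G_i=G_j[p^i]$ — has already been carried out in the proof of Proposition~\ref{sub-pdv}.
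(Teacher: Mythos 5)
Your argument is correct and follows exactly the paper's implicit reasoning: the corollary is stated there as a self-contained reformulation of Proposition~\ref{sub-pdv}, with no separate proof given, and your write-up simply makes the translation explicit via \eqref{pdv-fic-end}, the exactness of that functor, and the observations in Remark~\ref{rmk-sub-pdv}. There are no gaps; in particular, your uniqueness argument correctly reduces to the condition $(\height H_1/d,\dim H_1/d)=z$ via the polygon's endpoint and then invokes fully faithfulness as in the remark.
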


\begin{rmk}\label{rmk-sub-pdv-mbe}
 Note that, for every $p$-divisible group $H$ over $\O_K$, the slopes of $\HN(H[p])$ are
 bounded between $0$ and $1$ (e.g.\ because $\HN(H[p])\le\Hdg(H)$ and
 the latter polygon has only slopes $1$ and $0$, alternatively see \cite[Corollaire~2]{F1}).
 In particular, since $\HN(H)\le\HN(H[p])$,
 the assumption of the previous corollary is always verified for
 the first break point of $\HN(H)$ after the slope $1$ and its last break point before the slope $0$.
 In this case, the statement recovers the multiplicative-bilocal-étale filtration of $H$
 (a refined version of \eqref{xct-con-et} obtained by further considering the same sequence for
 $(H^\circ)^\vee$ and dualising back).
 A posteriori, knowing that this filtration is compatible with
 reducing to $H_\k$ via $\O_K\rightarrow\k$ and with passing to the $p^i$-torsion $H[p^i]$,
 one deduces that the break points in question are also break points of
 $\Newt(H)$ and $\HN(H[p^i])$, for $i\ge1$.
\end{rmk}

\subsection{Hodge-Newton reducible filtered isocrystals}\label{S-Hdg-Nwt-fic}

Let us now introduce the Hodge-Newton reducibility hypothesis in the discussion.
This hypothesis concerns the Hodge and the Newton polygon and
can therefore by formulated at the level of filtered isocrystals.
We first analyse its effects in this setting,
following the same argument as in \cite[\S 5.1]{XS}.

\begin{dfn}
 A weakly admissible filtered isocrystal $(\NF,\iota)$ over $K$ with coefficients in $F$
 is \emph{Hodge-Newton reducible} if there exists a break point of $\Newt(\NF,\iota)$
 which also lies on $\Hdg(\NF,\iota)$.
 If $z$ is such a point, we say that $(\NF,\iota)$ is Hodge-Newton reducible \emph{at} $z$.
\end{dfn}

\begin{prp}\label{Hdg-Nwt-fic}
 Let $(\NF,\iota)$ be a weakly admissible filtered isocrystal over $K$ with coefficients in $F$
 and suppose that $(\NF,\iota)$ is Hodge-Newton reducible at $z=(x,y)$.
 Then, $z$ is also a break point of the Harder-Narasimhan polygon $\HN(\NF,\iota)$.
 Furthermore, if $(\NF_1,\iota_1)$ denotes the corresponding subobject in $\Fil\Isoc_{K,F}^\wa$,
 then $\Newt(\NF_1,\iota_1)$ and $\Hdg(\NF_1,\iota_1)$ equal respectively the restriction of
 $\Newt(\NF,\iota)$ and $\Hdg(\NF,\iota)$ to $[0,x]$.
 If $(\NF_2,\iota_2)$ denotes the quotient of $(\NF,\iota)$ by $(\NF_1,\iota_1)$, 
 then $\Newt(\NF_2,\iota_2)$ and $\Hdg(\NF_2,\iota_2)$ equal respectively the rest of
 $\Newt(\NF,\iota)$ and $\Hdg(\NF,\iota)$ after $z$
 (up to a shift of coordinates setting the origin in $z$).
\end{prp}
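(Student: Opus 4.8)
The plan is to produce the subobject by slicing $(N,\phi)$ along its slope decomposition at the Newton break point $z$, to check that this slice becomes weakly admissible, and then to match it with the subobject of the Harder--Narasimhan filtration attached to $z$. Concretely, I would first observe that, since $z=(x,y)$ is a break point of $\Newt(\NF,\iota)\colon x'\mapsto\frac1d\Newt(N,\phi)(dx')$, the integer $dx$ is a partial sum $h_1+\dots+h_j$ of the multiplicities of the Newton slopes of $(N,\phi)$; then $(N_1,\phi_1):=\bigoplus_{i\le j}(N_{\lambda_i},\phi_{\lambda_i})$ is a direct summand of $(N,\phi)$ in $\Isoc(\k)$, hence a sub-isocrystal, which is stable under $\iota$ by functoriality of the slope decomposition, with $\height(N_1,\phi_1)=dx$ and $-\dim(N_1,\phi_1)=dy$. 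Equipping $N_1$ with $\Fil^\bullet N_{1,K}:=N_{1,K}\cap\Fil^\bullet N_K$ and the restricted action $\iota_1$ gives a subobject $(\NF_1,\iota_1)\subseteq(\NF,\iota)$ in $\Fil\Isoc_{K,F}$ whose Newton polygon is, by construction, the restriction of $\Newt(\NF,\iota)$ to $[0,x]$; since $(N,\phi)/(N_1,\phi_1)\cong\bigoplus_{i>j}(N_{\lambda_i},\phi_{\lambda_i})$, the quotient $(\NF_2,\iota_2):=(\NF,\iota)/(\NF_1,\iota_1)$ has $\Newt(\NF_2,\iota_2)$ equal to the rest of $\Newt(\NF,\iota)$ after $z$.

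Next I would prove that $(\NF_1,\iota_1)$ is weakly admissible; this is the heart of the matter. Passing to a field extension $K'$ of $K$ containing all embeddings of $F$ and using the decomposition \eqref{egn-dcp-fic}, each $(N_{1,\tau},\Fil^\bullet N_{1,\tau})$ is a subobject of $(N_\tau,\Fil^\bullet N_\tau)$ of dimension $x$, so Lemma~\ref{lem-fvs} gives $-\deg(N_{1,\tau},\Fil^\bullet N_{1,\tau})\le f_\tau(x)$; summing over $\tau$ and dividing by $d$ yields $-t_H(\NF_1)/d\le\Hdg(\NF,\iota)(x)=y$, where the last equality is the Hodge--Newton reducibility hypothesis. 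On the other hand $(\NF_1,\iota_1)$ is a subobject of the weakly admissible $(\NF,\iota)$, so $t_H(\NF_1)\le t_N(\NF_1)$, i.e.\ $-t_H(\NF_1)/d\ge-t_N(\NF_1)/d=y$. Hence $t_H(\NF_1)=t_N(\NF_1)$; since every subobject of $\NF_1$ is a subobject of $\NF$, this makes $\NF_1$ weakly admissible, so $(\NF_1,\iota_1)\in\Fil\Isoc_{K,F}^\wa$ and, by the two-out-of-three property, so is $(\NF_2,\iota_2)$. Moreover the two inequalities just used become equalities, so $-\deg(N_{1,\tau},\Fil^\bullet N_{1,\tau})=f_\tau(x)$ for every $\tau$, and the equality case of Lemma~\ref{lem-fvs} shows the type of each $(N_{1,\tau},\Fil^\bullet N_{1,\tau})$ is the restriction of $f_\tau$ to $[0,x]$; averaging over $\tau$ gives that $\Hdg(\NF_1,\iota_1)$ is the restriction of $\Hdg(\NF,\iota)$ to $[0,x]$.

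I would then deduce that $z$ is a break point of $\HN(\NF,\iota)$ and identify $(\NF_1,\iota_1)$ with the Harder--Narasimhan subobject at $z$. Since $(\NF_1,\iota_1)$ is a weakly admissible subobject with $(\height(N_1,\phi_1)/d,-t_N(\NF_1)/d)=(x,y)$ and $\HN(\NF,\iota)$ is the concave envelope of all such points, $y\le\HN(\NF,\iota)(x)$; together with Proposition~\ref{HN<Nwt-fic-end} and $\Newt(\NF,\iota)(x)=y$ this forces $\HN(\NF,\iota)(x)=y$. As $\HN(\NF,\iota)\le\Newt(\NF,\iota)$ are concave, agree at $0$, at $x$, and at their common end point, and $z$ is a break point of $\Newt(\NF,\iota)$, a comparison of one-sided slopes at $x$ (the left slope of $\HN(\NF,\iota)$ is $\ge$ that of $\Newt(\NF,\iota)$, the right slope $\le$) shows $z$ is a break point of $\HN(\NF,\iota)$ too. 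Letting $(\NF_1',\iota_1')$ be the Harder--Narasimhan subobject cut out by $z$, its Harder--Narasimhan polygon is the restriction of $\HN(\NF,\iota)$ to $[0,x]$, which dominates $\HN(\NF_1,\iota_1)$, so the minimal slope of $(\NF_1,\iota_1)$ is at least that of $(\NF_1',\iota_1')$, hence strictly greater than the maximal slope of $(\NF,\iota)/(\NF_1',\iota_1')$; functoriality of the Harder--Narasimhan filtration then gives $(\NF_1,\iota_1)\subseteq(\NF_1',\iota_1')$, and equality of heights forces $(\NF_1,\iota_1)=(\NF_1',\iota_1')$. In particular $(\NF_2,\iota_2)$ is the Harder--Narasimhan quotient at $z$, so $\HN(\NF_2,\iota_2)$ is the rest of $\HN(\NF,\iota)$ after $z$.

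Finally, for the Hodge polygon of $\NF_2$ I would base change the strict short exact sequence $0\to\NF_1\to\NF\to\NF_2\to0$ to $K'$; it decomposes into short exact sequences of the $(N_{i,\tau},\Fil^\bullet N_{i,\tau})$ in $\Fil\Vect_{K'|K'}$, and since the type of $(N_{1,\tau},\Fil^\bullet N_{1,\tau})$ is the initial part of $f_\tau$ up to $x$, additivity of graded dimensions forces the type of $(N_{2,\tau},\Fil^\bullet N_{2,\tau})$ to be the remaining part; averaging over $\tau$ then shows $\Hdg(\NF_2,\iota_2)$ is the rest of $\Hdg(\NF,\iota)$ after $z$, which completes the argument. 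The step I expect to be the main obstacle is the weak admissibility of $\NF_1$ in the second paragraph: that is precisely where the hypothesis $z\in\Hdg(\NF,\iota)$ is consumed, through the sandwiching of $-t_H(\NF_1)/d$ between the value $y$ from Lemma~\ref{lem-fvs} and the value $y$ from the subobject inequality for the weakly admissible $\NF$; the remaining steps are either appeals to the Harder--Narasimhan formalism or routine bookkeeping with Lemma~\ref{lem-fvs}.
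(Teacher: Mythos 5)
Your proposal is correct and follows essentially the same route as the paper's proof: slice $(N,\phi)$ along the slope decomposition at $z$, equip $N_1$ with the induced filtration, establish weak admissibility of $\NF_1$ by sandwiching $-t_H(\NF_1)/d$ between $y$ (from the subobject inequality) and $y$ (from Lemma~\ref{lem-fvs} combined with $\Hdg(\NF,\iota)(x)=y$), extract the Hodge polygon via the equality case of Lemma~\ref{lem-fvs}, and then identify $(\NF_1,\iota_1)$ with the Harder--Narasimhan subobject by the slope-comparison and functoriality argument. The only differences are presentational: you spell out the one-sided slope comparison showing $z$ is a break point of $\HN(\NF,\iota)$ (which the paper asserts) and you phrase the weak-admissibility step as a sandwich rather than a single chain of (in)equalities, but the ideas match step for step.
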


\begin{proof}
 Write $\NF=(N,\phi,\Fil^\bullet N_K)$.
 Since $z$ is a break point of $\Newt(\NF,\iota)=\Newt(N,\phi,\iota)$,
 we have a decomposition:
 \[
  (N,\phi,\iota)=(N_1,\phi_1,\iota_1)\oplus(N_2,\phi_2,\iota_2)
 \]
 in $\Isoc(\k)_F$,
 with $\Newt(N_1,\phi_1,\iota_1)$ equal to the restriction of $\Newt(\NF,\iota)$ to $[0,x]$ and 
 $\Newt(N_2,\phi_2,\iota_2)$ equal to the rest of $\Newt(\NF,\iota)$ after $z$,
 up to a shift of coordinates setting the origin in $z$;
 in particular, $\dim_{K_0}N_1=\height(N_1,\phi_1)=dx$.
 
 We endow $N_{1,K}$ with the induced filtration
 $\Fil^\bullet N_{1,K}:=N_{1,K}\cap\Fil^\bullet N_K$,
 so that $\NF_1:=(N_1,\phi_1,\Fil^\bullet N_{1,K})$ is a sub-filtered-isocrystal of $\NF$.
 Since $\iota_1$ respects this filtration, we obtain a subobject
 $(\NF_1,\iota_1)$ of $(\NF,\iota)$ in $\Fil\Isoc_{K,F}$.
 Let then $(\NF_2,\iota_2)\in\Fil\Isoc_{K,F}$ be the quotient of $(\NF,\iota)$ by
 $(\NF_1,\iota_1)$ and note that the underlying isocrystal with coefficients in $F$ of
 $(\NF_2,\iota_2)$ identifies with $(N_2,\phi_2,\iota_2)$.
 Thus, $\Newt(\NF_1,\iota_1)=\Newt(N_1,\phi_1,\iota_1)$ equals the restriction of
 $\Newt(\NF,\iota)$ to $[0,x]$ and $\Newt(\NF_2,\iota_2)=\Newt(N_2,\phi_2,\iota_2)$ equals
 the rest of $\Newt(\NF,\iota)$ after $z$ (up to the usual shift of coordinates).
 
 \bigskip\noindent
 We claim that $(\NF_1,\iota_1)$ is weakly admissible;
 note that this implies that $(\NF_2,\iota_2)$ is weakly admissible too.
 In addition, the following argument allows us to find the Hodge polygon of these two objects.
 
 Since $\NF_1$ is a sub-filtered-isocrystal of $\NF$, which is weakly admissible,
 we have that $t_H(\NF_1)\le t_N(\NF_1)$ and only need to check the opposite inequality.
 Pick a field extension $K'$ of $K$ containing all embeddings $\tau$ of $F$
 in an algebraic closure of $K$ and let:
 \begin{gather*}
   N_{K'}=\bigoplus_{\tau\colon F\rightarrow K'}N_\tau,
    \qquad\Fil^\bullet N_{K'}=\bigoplus_{\tau\colon F\rightarrow K'}\Fil^\bullet N_\tau, \\
   N_{1,K'}=\bigoplus_{\tau\colon F\rightarrow K'}N_{1,\tau},
    \qquad\Fil^\bullet N_{1,K'}=\bigoplus_{\tau\colon F\rightarrow K'}\Fil^\bullet N_{1,\tau}
 \end{gather*}
 be the decompositions as in \eqref{egn-dcp-fic}.
 Note that $N_{1,\tau}=N_{1,K'}\cap N_\tau$ and
 $\Fil^\bullet N_{1,\tau}=N_{1,\tau}\cap\Fil^\bullet N_\tau$,
 so $(N_{1,\tau},\Fil^\bullet N_{1,\tau})$ is a subobject of
 $(N_\tau,\Fil^\bullet N_\tau)$ in $\Fil\Vect_{K'|K'}$,
 with $\dim_{K'}N_{1,\tau}=\dim_{K_0}N_1/d=x$.
 By Lemma~\ref{lem-fvs}, then:
 \begin{equation}\label{HN-red-fic-f1}
  -\deg(N_{1,\tau},\Fil^\bullet N_{1,\tau})\le f_\tau(x)
 \end{equation}
 for every $\tau$, where $f_\tau$ is the type of $(N_\tau,\Fil^\bullet N_\tau)$.
 Thus, using that $z=(x,y)$ also lies on~$\Hdg(\NF,\iota)$:
 \begin{equation}\label{HN-red-fic-f2}
 \begin{aligned}
  t_N(\NF_1) &=-d\Newt(\NF_1,\iota_1)(x)=-dy \\
   &=-d\Hdg(\NF,\iota)(x)=-\sum_\tau f_\tau(x) \\
   &\le\sum_\tau\deg(N_{1,\tau},\Fil^\bullet N_{1,\tau}) \\
   &=\deg(N_1,\Fil^\bullet N_{1,K})=t_H(\NF_1).
 \end{aligned}
 \end{equation}
 This proves the claim.
 
 As a consequence, we have an equality in \eqref{HN-red-fic-f2} and
 hence in \eqref{HN-red-fic-f1} for every $\tau$.
 By Lemma~\ref{lem-fvs} again, the type of
 $(N_{1,\tau},\Fil^\bullet N_{1,\tau})$ equals then the restriction of $f_\tau$ to $[0,x]$.
 Summing over $\tau$ and dividing by $d$,
 we obtain that $\Hdg(\NF_1,\iota_1)$ equals the restriction of $\Hdg(\NF,\iota)$ to $[0,x]$.
 
 Denote now by $\Fil^\bullet N_{2,K}$ the filtration of $\NF_2$ and let:
 \[
   N_{2,K'}=\bigoplus_{\tau\colon F\rightarrow K'}N_{2,\tau},
    \qquad\Fil^\bullet N_{2,K'}=\bigoplus_{\tau\colon F\rightarrow K'}\Fil^\bullet N_{2,\tau}
 \]
 be the decomposition as in \eqref{egn-dcp-fic},
 with respect to the $F$-action given by $\iota_2$.
 Note that $(N_{2,\tau},\Fil^\bullet N_{2,\tau})$ is the quotient of
 $(N_\tau,\Fil^\bullet N_\tau)$ by $(N_{1,\tau},\Fil^\bullet N_{1,\tau})$
 in $\Fil\Vect_{K'|K'}$ for every $\tau$.
 Since we already know that the type of $(N_{1,\tau},\Fil^\bullet N_{1,\tau})$ equals
 the restriction of $f_\tau=f(N_\tau,\Fil^\bullet N_\tau)$ to $[0,x]$,
 it follows from the behaviour of the type in short exact sequences that
 $f(N_{2,\tau},\Fil^\bullet N_{2,\tau})$ equals the rest of $f_\tau$ after $(x,f_\tau(x))$,
 up to a shift of coordinates setting the origin in $(x,f_\tau(x))$.
 Summing over $\tau$ and dividing by $d$,
 we obtain that $\Hdg(\NF_2,\iota_2)$ equals the rest of $\Hdg(\NF,\iota)$ after $z$,
 up to a shift of coordinates setting the origin in $z$.
 
 \bigskip\noindent
 We can finally show that $z$ is a break point of $\HN(\NF,\iota)$ and that
 $(\NF_1,\iota_1)$ is the corresponding subobject of $(\NF,\iota)$ in $\Fil\Isoc_{K,F}^\wa$.
 This is enough to finish the proof of the proposition,
 as we already know that the Newton polygon and the Hodge polygon of $(\NF_1,\iota_1)$ and
 $(\NF_2,\iota_2)=(\NF,\iota)/(\NF_1,\iota_1)$ are as claimed in the statement.
 
 Because $(\NF_1,\iota_1)$ is a subobject of $(\NF,\iota)$ in $\Fil\Isoc_{K,F}^\wa$, we have:
 \[
  -t_N(\NF_1)/d\le\HN(\NF,\iota)(\height(N_1,\phi_1)/d)=\HN(\NF,\iota)(x).
 \]
 Now, on the one hand:
 \[
  -t_N(\NF_1)/d=\Newt(\NF_1,\iota_1)(x)=\Newt(\NF,\iota)(x)=y;
 \]
 on the other hand, by Proposition~\ref{HN<Nwt-fic-end}:
 \[
  \HN(\NF,\iota)(x)\le\Newt(\NF,\iota)(x)=y.
 \]
 Thus, equality holds and, in particular, $z=(x,y)$ lies on $\HN(\NF,\iota)$.
 But $z$ is a break point of $\Newt(\NF,\iota)$ and, by Proposition~\ref{HN<Nwt-fic-end} again, 
 $\HN(\NF,\iota)\le\Newt(\NF,\iota)$. Hence, $z$ is break point of $\HN(\NF,\iota)$ as well.
 
 Lastly, note that $(\NF_1,\iota_1)$ is a subobject of $(\NF,\iota)$ in $\Fil\Isoc_{K,F}^\wa$
 with the property that $(\height(N_1,\phi_1)/d,-t_N(\NF_1)/d)=z$.
 By functoriality of the Harder-Narasimhan filtration
 (see also the argument in the second part of Remark~\ref{rmk-sub-pdv}),
 it follows that $(\NF_1,\iota_1)$ is the subobject of $(\NF,\iota)$ corresponding to $z$.
 This concludes the proof of the proposition.
\end{proof}

\subsection{Hodge-Newton reducible \texorpdfstring{$p$}{p}-divisible groups}\label{S-Hdg-Nwt-pdv}

\begin{dfn}
 A $p$-divisible group $(H,\iota)$ over $\O_K$ with endomorphism structure for $\O_F$
 is \emph{Hodge-Newton reducible} if its associated filtered isocrystal over $K$ with
 coefficients in $F$ is Hodge-Newton reducible, i.e.\ there exists a break point of
 $\Newt(H,\iota)$ which also lies on $\Hdg(H,\iota)$.
 If $z$ is such a point, we say that $(H,\iota)$ is Hodge-Newton reducible \emph{at} $z$.
\end{dfn}

The Hodge-Newton reducibility is an assumption that regards the whole equivariant isogeny class
of a $p$-divisible group $(H,\iota)$ over $\O_K$ with endomorphism structure for $\O_F$.
Using Propositions \ref{HN-lim} and~\ref{key}, we deduce a constraint on $\HN(H[p],\iota)$,
hence concerning $(H,\iota)$ itself.
We can then apply Proposition~\ref{sub-pdv} and obtain the following theorem.

\begin{thm}\label{thm}
 Let $(H,\iota)$ be a $p$-divisible group over $\O_K$ with endomorphism structure for $\O_F$
 and suppose that $(H,\iota)$ is Hodge-Newton reducible at $z$.
 Then, there exists a unique $\iota$-stable sub-$p$-divisible group $H_1$ of $H$ such that,
 if $\iota_1$ denotes the restriction of $\iota$ to $H_1$,
 then $\Newt(H_1,\iota_1)$, $\Hdg(H_1,\iota_1)$ and $\HN(H_1,\iota_1)$ equal respectively
 the part of $\Newt(H,\iota)$, $\Hdg(H,\iota)$ and $\HN(H,\iota)$ between the origin and $z$.
 Furthermore, if $H_2$ denotes the quotient of $H$ by $H_1$, with induced $\O_F$-action $\iota_2$,
 then $\Newt(H_2,\iota_2)$, $\Hdg(H_2,\iota_2)$ and $\HN(H_2,\iota_2)$ equal respectively
 the rest of $\Newt(H,\iota)$, $\Hdg(H,\iota)$ and $\HN(H,\iota)$ after $z$
 (up to a shift of coordinates setting the origin in $z$).
\end{thm}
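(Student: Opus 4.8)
The plan is to assemble the statement from pieces already in place: the filtered-isocrystal version (Proposition~\ref{Hdg-Nwt-fic}) and the lifting criterion (Proposition~\ref{sub-pdv}), with Propositions~\ref{HN-lim} and~\ref{key} linking the two. First I would let $(\NF,\iota)$ be the weakly admissible filtered isocrystal over $K$ with coefficients in $F$ attached to $(H,\iota)$ by the equivalence~\eqref{pdv-fic-end}, so that $\Newt(H,\iota)=\Newt(\NF,\iota)$, $\Hdg(H,\iota)=\Hdg(\NF,\iota)$ and $\HN(H,\iota)=\HN(\NF,\iota)$, all in $\Q^n_+$ with $n=\height H/d$. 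The hypothesis that $(H,\iota)$ is Hodge--Newton reducible at $z=(x,y)$ says precisely that $(\NF,\iota)$ is Hodge--Newton reducible at $z$, so Proposition~\ref{Hdg-Nwt-fic} applies and gives: $z$ is a break point of $\HN(\NF,\iota)$; the corresponding subobject $(\NF_1,\iota_1)\subseteq(\NF,\iota)$ in $\Fil\Isoc_{K,F}^\wa$ has $\Newt(\NF_1,\iota_1)$ and $\Hdg(\NF_1,\iota_1)$, and---by the construction of the Harder--Narasimhan polygon out of the Harder--Narasimhan filtration---also $\HN(\NF_1,\iota_1)$, equal to the restrictions to $[0,x]$ of the corresponding polygons of $(\NF,\iota)$; and the quotient $(\NF_2,\iota_2)$ carries the respective rests after $z$.

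The next step is to check the extra hypothesis of Proposition~\ref{sub-pdv}, namely that $z$ also lies on $\HN(H[p],\iota)$. I would sandwich this polygon: Proposition~\ref{HN-lim} gives $\HN(H,\iota)\le\HN(H[p],\iota)$ and Proposition~\ref{key} gives $\HN(H[p],\iota)\le\Hdg(H,\iota)$, both as functions on $[0,n]$. Since $z$ lies on $\HN(\NF,\iota)=\HN(H,\iota)$ by the previous step and on $\Hdg(H,\iota)$ by the Hodge--Newton reducibility assumption, evaluating at $x$ yields $y=\HN(H,\iota)(x)\le\HN(H[p],\iota)(x)\le\Hdg(H,\iota)(x)=y$, so $\HN(H[p],\iota)(x)=y$ and $z\in\HN(H[p],\iota)$ as wanted.

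Now $z$ is a break point of $\HN(H,\iota)=\HN(\NF,\iota)$ lying on $\HN(H[p],\iota)$, so Proposition~\ref{sub-pdv} produces the unique $\iota$-stable sub-$p$-divisible group $H_1\subseteq H$ whose associated filtered isocrystal with coefficients in $F$ is $(\NF_1,\iota_1)$. Set $H_2:=H/H_1$ with induced action $\iota_2$; by Remark~\ref{rmk-sub-pdv}, the functor~\eqref{pdv-fic-end} sends $0\to H_1\to H\to H_2\to 0$ to $0\to(\NF_1,\iota_1)\to(\NF,\iota)\to(\NF_2,\iota_2)\to 0$, so $(\NF_2,\iota_2)$ is the filtered isocrystal of $(H_2,\iota_2)$. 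Transporting the polygon identities of Proposition~\ref{Hdg-Nwt-fic} through~\eqref{pdv-fic-end} then gives all the asserted equalities of $\Newt$, $\Hdg$ and $\HN$ for $(H_1,\iota_1)$ and $(H_2,\iota_2)$.

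For the uniqueness phrased in the theorem: if $H_1'$ is an $\iota$-stable sub-$p$-divisible group with $\HN(H_1',\iota_1')$ equal to the restriction of $\HN(H,\iota)$ to $[0,x]$, its endpoint is $z$, so $\height H_1'=dx$ and $\dim H_1'=dy$; by the functoriality argument for Harder--Narasimhan filtrations recalled in Remark~\ref{rmk-sub-pdv}, its associated filtered isocrystal with coefficients in $F$ must be $(\NF_1,\iota_1)$, and then the uniqueness clause of Proposition~\ref{sub-pdv} forces $H_1'=H_1$. I expect no real obstacle here: the substance has already been supplied by Proposition~\ref{key} (the comparison $\HN(H[p],\iota)\le\Hdg(H,\iota)$, the only place the ramified endomorphism structure genuinely enters) and by Proposition~\ref{sub-pdv} (the lifting algorithm); what is left is bookkeeping, the only mildly delicate points being the compatibility of the three polygons with the equivalence~\eqref{pdv-fic-end} and reconciling the uniqueness statement of Proposition~\ref{sub-pdv} with the one demanded in the theorem.
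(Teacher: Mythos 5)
Your proof follows exactly the same route as the paper's: translate to $(\NF,\iota)$ via \eqref{pdv-fic-end}, apply Proposition~\ref{Hdg-Nwt-fic} to get the subobject $(\NF_1,\iota_1)$ and the polygon identities, sandwich $\HN(H,\iota)\le\HN(H[p],\iota)\le\Hdg(H,\iota)$ from Propositions~\ref{HN-lim} and~\ref{key} to verify the hypothesis of Proposition~\ref{sub-pdv}, then lift $(\NF_1,\iota_1)$ to $H_1$ and deduce uniqueness from Remark~\ref{rmk-sub-pdv} and Proposition~\ref{sub-pdv}. The argument is correct and complete.
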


\begin{proof}
 Let $(\NF,\iota)$ be the filtered isocrystal with coefficients in $F$
 associated to $(H,\iota)$.
 By Proposition~\ref{Hdg-Nwt-fic}, $z$ is a break point of $\HN(\NF,\iota)=\HN(H,\iota)$.
 Let then $(\NF_1,\iota_1)$ be the corresponding subobject of
 $(\NF,\iota)$ in $\Fil\Isoc_{K,F}^\wa$ and let $(\NF_2,\iota_2)$ be the quotient of
 $(\NF,\iota)$ by $(\NF_1,\iota_1)$.
 
 By Propositions \ref{HN-lim} and~\ref{key}, we have:
 \[
  \HN(H,\iota)\le\HN(H[p],\iota)\le\Hdg(H,\iota).
 \]
 Since $z$ lies on both $\HN(H,\iota)$ and $\Hdg(H,\iota)$,
 this implies that $z$ also lies on $\HN(H[p],\iota)$.
 Then, by Proposition~\ref{sub-pdv},
 there exists a unique $\iota$-stable sub-$p$-divisible group $H_1$ of $H$ whose
 associated filtered isocrystal with coefficients in $F$ is $(\NF_1,\iota_1)$.
 Furthermore, if $H_2$ denotes the quotient of $H$ by $H_1$,
 with induced $\O_F$-action $\iota_2$, then $(\NF_2,\iota_2)$ is
 the filtered isocrystal with coefficients in $F$ associated to $(H_2,\iota_2)$.
 Denoting by $\iota_1$ the restriction of $\iota$ to $H_1$,
 it follows then from Proposition~\ref{Hdg-Nwt-fic} that
 the polygons of $(H_1,\iota_1)$ and $(H_2,\iota_2)$ are as claimed.
 
 As for uniqueness, the prescription on the Harder-Narasimhan polygon of
 $(H_1,\iota_1)$ ensures that its associated filtered isocrystal with coefficients in $F$ is
 the $(\NF_1,\iota_1)$ considered above.
 The uniqueness of $H_1$ follows then from Proposition~\ref{sub-pdv}.
\end{proof}

\subsection{The polarised case}

Assume here that $F$ carries a field involution $(\ )^*\colon F\rightarrow F$,
possibly equal to the identity of $F$.
For $n\in\N$ and an element $f=(a_i)_{i=1}^n\in\Q^n_+$ of the Newton set,
we define its \emph{dual} element to be $f^\vee:=(1-a_{n+1-i})_{i=1}^n\in\Q^n_+$ or,
as a polygon:
\[
 f^\vee\colon x\mapsto x+f(n-x)-f(n).
\]
We clearly have ${f^\vee}^\vee=f$.
If $f\in\Q^n_+$ satisfies $f=f^\vee$, we say that $f$ is \emph{symmetric}.

For $R$ a complete Noetherian commutative local ring with residue field of characteristic~$p$ and
$(H,\iota)$ a $p$-divisible group over $R$ with endomorphism structure for $\O_F$,
let $\iota^\vee\colon\O_F\rightarrow\End(H^\vee)$ be the $\O_F$-action on
the dual $p$-divisible group~$H^\vee$ induced by $\iota$ through functoriality of Cartier duality;
denote by $\iota^{\vee,*}$ the composition $\iota^\vee\circ(\ )^*$.
Then, $(H^\vee\!,\iota^{\vee,*})$ is a new
$p$-divisible group with endomorphism structure for $\O_F$.

When $R=\O_K$, Remark~\ref{HN-vee-pdv} shows that the Harder-Narasimhan
polygon of $(H^\vee\!,\iota^{\vee,*})$ is the dual of $\HN(H,\iota)\in\Q^n_+$,
where $n=\height H/d$ (and recall that $\HN(H,\iota)(n)=\dim H/d$).
In fact, the twist introduced by $(\ )^*$ in the endomorphism structure
does not really affect the Harder-Narasimhan polygon,
which depends on the $\O_F$-action only by means of the rescaling factor $1/d=1/[F:\Q_p]$.
We will now see that the other polygons behave in the same way with respect to duality.

\paragraph{Isocrystals and duality.}
Let $(N,\phi)$ be an isocrystal over $\k$.
The \emph{dual isocrystal} is given by the dual $K_0$-vector-space $N^\vee:=\Hom_{K_0}(N,K_0)$,
together with the $\sigma$-linear endomorphism $\phi^\vee\colon f\mapsto\sigma\circ f\circ V$,
where $V=p\phi^{-1}$ is the Verschiebung map of $(N,\phi)$.
If $(N,\phi)$ is isotypical of slope $\lambda=r/s$,
then $(N^\vee\!,\phi^\vee)$ is isotypical of slope $1-\lambda=(s-r)/s$.
Indeed, if $M\subseteq N$ is a $W(\k)$-lattice with $\phi^sM=p^rM$,
then the $W(\k)$-lattice $M^\vee:=\Set{f\in N^\vee|f(M)\subseteq W(\k)}\subseteq N^\vee$
satisfies $\phi^{s-r}M^\vee=p^rM^\vee$.
In general, if the Newton slopes of $(N,\phi)$ are $\lambda_1<\dots<\lambda_m$,
then those of $(N^\vee\!,\phi^\vee)$ will be $1-\lambda_m<\dots<1-\lambda_1$,
with the height of the isotypical component associated to $1-\lambda_i$ equal to
that of the isotypical component of $(N,\phi)$ associated to $\lambda_i$, for $1\le i\le m$.

\paragraph{The Newton polygon and duality.}
Let $(H,\iota)$ be a $p$-divisible group over $\O_K$ with endomorphism structure for $\O_F$ and
recall that $\Newt(H,\iota)$ is defined to be the Newton polygon of
the isocrystal~$(\D(H_\k)\otimes_{W(\k)}K_0,p^{-1}(\phi_{H_\k}\otimes\id))$ over $\k$,
with the induced $F$-action, where $(\D(H_\k),\phi_{H_\k})$ is the Dieudonné module of
the reduction $H_\k$ of $H$ to $\k$.
Now, if $(\D(H_\k^\vee),\phi_{H_\k^\vee})$ is the Dieudonné module of
the dual $p$-divisible group $H_\k^\vee$ over $\k$,
then the isocrystal $((\D(H_\k^\vee)\otimes_{W(\k)}K_0,(\phi_{H_\k^\vee}\otimes\id))$
identifies naturally with the dual of $((\D(H_\k)\otimes_{W(\k)}K_0,(\phi_{H_\k}\otimes\id))$,
as defined in the previous paragraph (cf.\ \cite[\S III, Proposition~6.4]{Fo1}).
Thus, shifting the slopes by $-1$ and computing the Newton polygons
(including the rescaling due to the endomorphism structure),
we obtain that $\Newt(H^\vee\!,\iota^{\vee,*})=\Newt(H,\iota)^\vee\in\Q^n_+$,
where $n=\height H/d$ (and recall that $\Newt(H,\iota)(n)=\dim H/d$).\looseness=-1

\paragraph{The Hodge polygon and duality.}
Let $(H,\iota)$ be a $p$-divisible group over $\O_K$ with endomorphism structure for $\O_F$.
If $K'$ is a sufficiently large field extension of $K$,
then the base change of the exact sequence~\eqref{Hdg-fil} along $\O_K\rightarrow K'$
carries a $\Q_p$-linear $F$-action induced by $\iota$ and splits as a direct sum of exact sequences:
\begin{equation}\label{Hdg-fil-egn}
 0\longrightarrow\omega_{H^\vee\!,K',\tau}\longrightarrow M(H)_{K',\tau}
  \longrightarrow\Lie(H)_{K',\tau}\longrightarrow0
\end{equation}
indexed by the embeddings $\tau$ of $F$ in $K'$,
with $F$ acting through $\tau\colon F\rightarrow K'$ on the respective component.
Recall now that we have a natural identification $\omega_H\cong\Hom_{\O_K}(\Lie(H),\O_K)$ of
finite free $\O_K$-modules, which, after base change along $\O_K\rightarrow K'$,
gives $\omega_{H,K'}\cong\Hom_{K'}(\Lie(H)_{K'},K')$.
By compatibility with respect to the $F$-action induced by $\iota$ on both sides,
this splits as a direct sum of isomorphisms:
\[
 \omega_{H,K',\tau}\cong\Hom_{K'}(\Lie(H)_{K'},K')_\tau=\Hom_{K'}(\Lie(H)_{K',\tau},K')
\]
indexed by $\tau$ as above and
with $F$ acting through $\tau\colon F\rightarrow K'$ on the respective component.
Using the exactness of \eqref{Hdg-fil-egn}, we get that:
\begin{equation}\label{omg-vee}
\begin{aligned}
 \dim_{K'}\omega_{H,K',\tau}
 &=\dim_{K'}\Hom_{K'}(\Lie(H)_{K',\tau},K') \\
 &=\dim_{K'}\Lie(H)_{K',\tau} \\
 &=\dim_{K'}M(H)_{K',\tau}-\dim_{K'}\omega_{H^\vee\!,K',\tau} \\
 &=\height H/d-\dim_{K'}\omega_{H^\vee\!,K',\tau}
\end{aligned}
\end{equation}
for all $\tau$'s.
Then, computing the types of the relevant filtrations and averaging over $\tau$,
we obtain that $\Hdg(H^\vee\!,\iota^{\vee,*})=\Hdg(H,\iota)^\vee\in\Q^n_+$,
where $n=\height H/d$ (and recall that $\Hdg(H,\iota)(n)=\dim H/d$).
Here, the twist introduced by $(\ )^*$ results in
a permutation of the types defining the Hodge polygon;
its influence, therefore, is cleared in the averaging process.

\begin{dfn}
 Let $R$ be a complete Noetherian commutative local ring with residue field of characteristic $p$,
 let $(H,\iota)$ be a $p$-divisible group over $R$ with endomorphism structure for $\O_F$ and
 consider the $\O_F$-action given by $\iota^{\vee,*}=\iota^\vee\circ(\ )^*$ on
 the dual $p$-divisible group~$H^\vee$.
 A \emph{polarisation} on $(H,\iota)$ is an $\O_F$-equivariant isomorphism
 $\lambda\colon H\rightarrow H^\vee$ such that,
 under the natural identification ${H^\vee}^\vee\cong H$,
 we have that $\lambda^\vee=-\lambda$ (i.e.\ $\lambda$ is \emph{antisymmetric}).
 We call $(H,\iota,\lambda)$ a \emph{polarised} $p$-divisible group over $R$ with
 endomorphism structure for $\O_F$.
\end{dfn}

\begin{rmk}
 The choice of defining a polarisation on a $p$-divisible group to be antisymmetric
 (as opposed to symmetric, i.e.\ such that $\lambda^\vee=\lambda$) follows the book~\cite{RZ}.
 This is based on the fact that if $H$ is the $p$-divisible group associated to an abelian scheme~$A$,
 then a symmetric map from $A$ to its dual induces an antisymmetric map~$H\rightarrow H^\vee$
 (cf.\ \cite[Proposition~1.12]{O}).
 In particular, since polarisations on abelian schemes are defined to be symmetric,
 then a principal polarisation on $A$ induces a polarisation on $H$ as defined here.\looseness=-1
\end{rmk}

If $(H,\iota,\lambda)$ is a polarised $p$-divisible group over $\O_K$ with
endomorphism structure for $\O_F$,
then the isomorphism $(H,\iota)\cong(H^\vee\!,\iota^{\vee,*})$ given by the polarisation
$\lambda$ implies that the Newton polygon, the Hodge polygon and the Harder-Narasimhan
polygon of $(H,\iota)$ coincide with the respective polygons of $(H^\vee\!,\iota^{\vee,*})$.
According to the compatibility relations found above, this means that these polygons are
symmetric elements of $\Q^n_+$, where $n=\height H/d$.
In particular, if $(H,\iota)$ is Hodge-Newton reducible at $z=(x,y)$,
then it is also Hodge-Newton reducible at the symmetric point
$z^\vee=(x^\vee\!,y^\vee)$ given by:
\begin{equation}\label{vee-pt}
 x^\vee=\height H/d-x, \qquad y^\vee=\dim H/d-x+y.
\end{equation}
Thus, applying Theorem~\ref{thm} for both points and using the uniqueness property,
we find the following corollary.

\begin{cor}\label{cor}\enlargethispage{\baselineskip}
 Let $(H,\iota,\lambda)$ be a polarised $p$-divisible group over $\O_K$ with endomorphism
 structure for $\O_F$, suppose that $(H,\iota)$ is Hodge-Newton reducible at $z=(x,y)$ and
 let $z^\vee=(x^\vee\!,y^\vee)$ be the symmetric point as in \eqref{vee-pt}.
 Assume without loss of generality that $x\le x^\vee$.
 Then, there exists a unique filtration of $(H,\iota)$ by
 sub-$p$-divisible groups with endomorphism structure for $\O_F$:
 \[
  (H_1,\iota_1)\subseteq(H_1',\iota_1')\subseteq(H,\iota)
 \]
 satisfying the following property:
 if $H_2$ denotes the quotient of $H_1'$ by $H_1$, with induced $\O_F$-action $\iota_2$,
 and $H_3$ denotes the quotient of $H$ by $H_1'$, with induced $\O_F$-action $\iota_3$,
 then $\Newt(H_i,\iota_i)$, $\Hdg(H_i,\iota_i)$ and $\HN(H_i,\iota_i)$ equal respectively the parts of
 $\Newt(H,\iota)$, $\Hdg(H,\iota)$ and $\HN(H,\iota)$ between the origin and $z$ if $i=1$,
 between $z$ and $z^\vee$ if $i=2$ (up to a shift of coordinates setting the origin in $z$)
 and between $z^\vee$ and $(\height H/d,\dim H/d)$ if $i=3$
 (up to a shift of coordinates setting the origin in $z^\vee$).
 Furthermore, if $H_2'$ denotes the quotient of $H$ by $H_1$, endowed with the induced $\O_F$-action,
 then $\lambda$ induces $\O_F$-equivariant isomorphisms:
 \[
  H_3\cong H_1^\vee \qquad\text{and}\qquad H_2'\cong H_1'^{\,\vee},
 \]
 where $H_1^\vee$ and $H_1'^{\,\vee}$ carry the $\O_F$-action given respectively by
 $\iota_1^{\vee,*}$ and $\iota_1'^{\,\vee,*}$.
 In addition, $\lambda$ induces a polarisation on $(H_2,\iota_2)$.
\end{cor}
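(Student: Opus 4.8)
The plan is to build the filtration by invoking Theorem~\ref{thm} twice — once at $z$ and once at the symmetric point $z^\vee$ — and then to recognise the two subgroups as mutual perpendiculars for the pairing carried by $\lambda$, the polarisation on $H_2$ arising from the standard reduction of a pairing modulo an isotropic subobject. First I would apply Theorem~\ref{thm} to $(H,\iota)$ at $z$ to obtain the $\iota$-stable sub-$p$-divisible group $H_1$, with $(\height H_1/d,\dim H_1/d)=z$ and all three polygons of $(H_1,\iota_1)$ equal to the restrictions to $[0,x]$ of those of $(H,\iota)$. Since $\lambda$ forces the Newton, Hodge and Harder-Narasimhan polygons of $(H,\iota)$ to be symmetric elements of $\Q^n_+$, the point $z^\vee$ is again a break point of $\Newt(H,\iota)$ lying on $\Hdg(H,\iota)$, so Theorem~\ref{thm} also applies at $z^\vee$ and produces $H_1'$ with endpoint $z^\vee$ and polygons equal to the restrictions to $[0,x^\vee]$. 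As $z$ precedes $z^\vee$ among the break points of $\HN(\NF,\iota)$ (here $x\le x^\vee$), the subobjects $(\NF_1,\iota_1)$ and $(\NF_1',\iota_1')$ of $(\NF,\iota)$ in $\Fil\Isoc_{K,F}^\wa$ corresponding to $z$ and $z^\vee$ are nested, $(\NF_1,\iota_1)\subseteq(\NF_1',\iota_1')$, and by full faithfulness of \eqref{pdv-fic-end} this lifts to an inclusion $H_1\subseteq H_1'$ of sub-$p$-divisible groups of $H$. I would set $H_2:=H_1'/H_1$; to identify its polygons I would observe that $(H_1',\iota_1')$ is itself Hodge-Newton reducible at $z$, apply Theorem~\ref{thm} to it, and use the uniqueness clause to see that the subgroup so obtained is exactly $H_1$, so that the polygons of $(H_2,\iota_2)$ are the parts of those of $(H,\iota)$ between $z$ and $z^\vee$. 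Uniqueness of the whole filtration is then immediate from the uniqueness clause of Theorem~\ref{thm}, as the prescribed polygons pin down both $H_1$ and $H_1'$.

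Next I would bring in $\lambda$. For an $\iota$-stable sub-$p$-divisible group $W\subseteq H$ set $W^\perp:=\lambda^{-1}\bigl((H/W)^\vee\bigr)$, i.e.\ the kernel of $H\xrightarrow{\lambda}H^\vee\twoheadrightarrow W^\vee$, where $H^\vee\twoheadrightarrow W^\vee$ is dual to $W\hookrightarrow H$. As $\lambda$ is an isomorphism and $(H/W)^\vee$ is a sub-$p$-divisible group of $H^\vee$, the object $W^\perp$ is an $\iota$-stable sub-$p$-divisible group of $H$ sitting in $0\to W^\perp\to H\to W^\vee\to0$, and comparing heights and dimensions shows $(\height W^\perp/d,\dim W^\perp/d)$ to be the symmetric point of $(\height W/d,\dim W/d)$. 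Taking $W=H_1$ produces an $\iota$-stable sub-$p$-divisible group with endpoint $z^\vee$, which by the uniqueness built into Theorem~\ref{thm} (cf.\ Remark~\ref{rmk-sub-pdv}) must be $H_1'$; hence $H_1^\perp=H_1'$ and, symmetrically, $H_1'^{\,\perp}=H_1$. The exact sequence for $W=H_1$ then yields the $\O_F$-equivariant isomorphism $H_3=H/H_1'\cong H_1^\vee$, and the one for $W=H_1'$ yields $H_2'=H/H_1\cong H_1'^{\,\vee}$; the targets carry the twisted actions $\iota_1^{\vee,*}$ and $\iota_1'^{\,\vee,*}$ because every map involved is $\O_F$-equivariant once the $(\ )^*$-twist is tracked through $\lambda$.

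Finally, for the polarisation on $(H_2,\iota_2)$ I would study the composite $\mu\colon H_1'\hookrightarrow H\xrightarrow{\lambda}H^\vee\twoheadrightarrow H_1'^{\,\vee}$. The relation $H_1=H_1'^{\,\perp}$ gives $\ker\mu=H_1$, while $H_1'=H_1^\perp$ shows that $\mu$ followed by $H_1'^{\,\vee}\twoheadrightarrow H_1^\vee$ (dual to $H_1\hookrightarrow H_1'$) is zero; hence $\mu$ lands in $\ker(H_1'^{\,\vee}\to H_1^\vee)=H_2^\vee$ and factors through an injection $\bar\mu\colon H_2\hookrightarrow H_2^\vee$, which is an isomorphism since $H_2$ and $H_2^\vee$ have the same height. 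It is $\O_F$-equivariant for the twisted action on $H_2^\vee$, and antisymmetric because $\mu^\vee=-\mu$ — a consequence of $\lambda^\vee=-\lambda$ together with $a^{\vee\vee}=a$ for the inclusion $a\colon H_1'\hookrightarrow H$ — so $\bar\mu$ is a polarisation on $(H_2,\iota_2)$.

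The main obstacle I anticipate is not a deep new ingredient but the careful bookkeeping: verifying that each perpendicular $W^\perp$ is flat over $\O_K$, hence a genuine sub-$p$-divisible group and not merely an fppf subsheaf; checking that its height and dimension land \emph{exactly} on the symmetric point, so that the uniqueness in Theorem~\ref{thm} can be invoked; and, above all, carrying the field involution $(\ )^*$ faithfully through every equivariance assertion, so that the final isomorphism $\bar\mu$ really is a polarisation in the precise sense of the definition.
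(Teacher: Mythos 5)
Your proposal is correct and follows the same route the paper indicates: apply Theorem~\ref{thm} at both $z$ and the symmetric point $z^\vee$, and invoke uniqueness to pin down the nesting and the whole filtration. The paper only states this strategy in one sentence before the corollary and leaves the duality bookkeeping implicit, whereas you spell it out explicitly, and your verification is sound. The one small point worth making visible is the computation that $W^\perp$ lands exactly on the symmetric point: writing $(\height W/d,\dim W/d)=(x,y)$, one gets $\height W^\perp/d=\height H/d-x$ and $\dim W^\perp/d=(\height H-\dim H)/d-x+y$, which matches $y^\vee=\dim H/d-x+y$ of \eqref{vee-pt} precisely because the polarisation $\lambda\colon H\xrightarrow{\sim}H^\vee$ forces $\height H=2\dim H$; that identity is used silently in your argument and deserves a word. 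Also note that the flatness concern you flag at the end is a non-issue: $W^\perp$ is defined as the kernel of a surjection of $p$-divisible groups $H\to W^\vee$ (equivalently $\lambda^{-1}\bigl((H/W)^\vee\bigr)$ with $(H/W)^\vee$ an honest sub-$p$-divisible group of $H^\vee$), so it is automatically a $p$-divisible group. The antisymmetry computation $\mu^\vee=-\mu$ and its descent to $\bar\mu$ via $q^\vee\circ\bar\mu^\vee\circ q=q^\vee\circ(-\bar\mu)\circ q$ with $q$ epi and $q^\vee$ mono are both correct.
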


\begin{rmk}
 With notation as above, note that the polarisation $\lambda$ induces an isomorphism
 $\omega_H\xrightarrow{\sim}\omega_{H^\vee}$ of $\O_K$-modules,
 which is $\O_F$-equivariant up to the involution $(\ )^*$ of $F$.
 Now, if this involution is trivial (that is, equal to the identity of $F$),
 then we get from \eqref{omg-vee} that $\dim_{K'}\omega_{H^\vee\!,K',\tau}=\height H/2d$
 for all embeddings $\tau$ of $F$ in a sufficiently large extension~$K'$ of $K$.
 In this case, therefore, we must have that $\height H\in 2d\N$  and:
 \[
  \Hdg(H,\iota)=(1^{(\height H/2d)},0^{(\height H/2d)}).
 \]
 In particular, if $(\ )^*$ is trivial then the above corollary simply recovers
 the multiplicative-bilocal-étale filtration of $H$,
 in its symmetric version due to the polarisation
 (and together with its compatibility with additional endomorphism structures).
 Indeed, because $\HN(H,\iota)\le\Newt(H,\iota)$,
 then $z$ must be the first break point of $\HN(H,\iota)$ after the slope~$1$ and
 $z^\vee$ its last break point before the slope~$0$ (see also Remark~\ref{rmk-sub-pdv-mbe}).
\end{rmk}

\subsection{The special fibre}\label{spf}

In this final section, we would like to compare the reduction to $\k$ of the filtration obtained in
Theorem~\ref{thm} with other known results at the level of $p$-divisible groups over $\k$.
This will ultimately lead to the fact that the reduction of our filtration is split.
Let us first make a preliminary observation about the Newton polygon.

\begin{rmk}\label{rmk-Nwt}
 Let $(H,\iota)$ be a $p$-divisible group over $\O_K$ with
 endomorphism structure for $\O_F$ and write $H_\k$ for the reduction of $H$ to $\k$.
 By definition, a slope $\lambda$ features in the polygon~$\Newt(H,\iota)$ if and
 only if $-\lambda$ is a Newton slope of
 the isocrystal~$(\D(H_\k)\otimes_{W(\k)}K_0,p^{-1}(\phi_{H_\k}\otimes\id))$ over $\k$
 (recall the minus sign in the original definition of the Newton polygon for isocrystals).
 Now, the factor $p^{-1}$ accounts for a shift of the Newton slopes by $-1$,
 so that the above is equivalent to $1-\lambda$ being a Newton slope of
 the isocrystal~$(\D(H_\k)\otimes_{W(\k)}K_0,(\phi_{H_\k}\otimes\id))$.
 But, as observed in the previous section,
 these are exactely the Newton slopes of the dual isocrystal,
 which in turn can be identified with $(\D(H_\k^\vee)\otimes_{W(\k)}K_0,\phi_{H_\k^\vee}\otimes\id)$.
 Note, finally, that the latter is the isocrystal associated to $(\D(H_\k^\vee),\phi_{H_\k^\vee})$,
 which coincides with the contravariant Dieudonné module of $H_\k$.
\end{rmk}

\paragraph{Comparison with the slope filtration.}
Let now $(H,\iota)\in\pdiv_{\O_K,\O_F}$ be Hodge-Newton reducible.
Then, the previous observation allows to recognise the reduction~$H_{1,\k}\subseteq H_\k$ to
$\k$ of the filtration from Theorem~\ref{thm} as
part of the slope filtration of $H_\k$ from \cite[Corollary~13]{Z2}.
This is immediate from the configuration of the Newton polygons stated in the theorem; in fact,
these polygons are really an invariant of the reduction of the respective $p$-divisible group and
they are only affected by $\iota$ in terms of a rescaling factor.

\paragraph{Comparison with the Hodge-Newton decomposition.}
More specifically, keep the notation from above and
consider the induced $\O_F$-actions $\iota$ on $H_\k$ and $\iota_1$ on $H_{1,\k}$.
Then, $(H_{1,\k},\iota_1)$ is one piece of the Hodge-Newton decomposition of
$(H_\k,\iota)$ obtained (in terms of $F$-crystals with $\O_F$-action) in \cite[1.3.2]{BH}.
In particular, the filtration $H_{1,\k}\subseteq H_\k$ is $\O_F$-equivariantly split.
Here, we should warn that the Hodge polygon (and consequently the notion of Hodge-Newton reducibility)
considered in loc.\ cit.\ does not, in general, coincide with the one defined here.
The former polygon, in fact, is an invariant of $p$-divisible groups
(or, more generally, $F$-crystals) over $\k$ with endomorphism structure for $\O_F$.
If $F$ is unramified over $\Q_p$, we saw in Remark~\ref{rmk-Hdg-unr} that
this is also the case for our definition and the two polygons amount indeed to the same invariant.
In general, Example~\ref{ex-Hdg-ram} shows that the two definitions cannot coincide.
However, the following considerations allow us to relate Theorem~\ref{thm} to \cite[1.3.2]{BH}.

First of all, enlarging the field $K$ if necessary,
write $\omega_H=\bigoplus_\upsilon\omega_{H,\upsilon}$ using \eqref{OK-dcp} and,
for each $\upsilon\colon F^\nr\rightarrow K_0$,
consider the decomposition~$\omega_{H,K,\upsilon}:=\omega_{H,\upsilon}\otimes_{\O_K}K=
\bigoplus_{\tau|\upsilon}\omega_{H,K,\tau}$ from \eqref{egn-dcp-ram}.
For every $\upsilon$, then,
choose an ordering~$\Set{\tau_{\upsilon,1},\dots,\tau_{\upsilon,e(F\mid\Q_p)}}$ of
the embeddings~$\tau\colon F\rightarrow K$ with $\tau|\upsilon$ and
set $d_{\upsilon,i}:=\dim_K\omega_{H,K,\tau_{\upsilon,i}}$ for $1\le i\le e(F|\Q_p)$.
Now, the fact that $(H_\k,\iota)$ comes from the object~$(H,\iota)$ over $\O_K$ ensures that
the $F$-crystal~$(\D(H_\k^\vee),\phi_{H_\k^\vee})$, together with the $\O_F$-action induced by $\iota$,
satisfies a \emph{Pappas-Rapoport condition}, as defined in \cite[1.2.1]{BH},
for the tuple of integers consisting of the $d_{\upsilon,i}$'s.
To be precise, this condition is given by the reduction to $\k$, for each $\upsilon$,
of the increasing filtration of $\omega_{H,\upsilon}$ defined by
$\omega_{H,\upsilon,j}:=\omega_{H,\upsilon}\cap\bigoplus_{i=1}^j\omega_{H,K,\tau_{\upsilon,i}}$,
for $0\le j\le e(F|\Q_p)$.
Recall, indeed, that we have
an $\O_F$-equivariant identification~$\omega_{H_\k}\cong\D(H_\k^\vee)/\phi_{H_\k^\vee}\D(H_\k^\vee)$.
Moreover, argueing as in the proof of Lemma~\ref{key-ram}, we see that
the definition above yields a filtration of $\omega_{H,\upsilon}$ by $\O_K$-direct-summands,
with $\rk_{\O_K}\omega_{H,\upsilon,j}/\omega_{H,\upsilon,j-1}=\dim_K\omega_{H,K,\tau_{\upsilon,j}}=
d_{\upsilon,j}$ for all $j=1,\dots,e(F|\Q_p)$ and with $\O_F$ acting on each of
these graded pieces through the corresponding embedding~$\tau_{\upsilon,j}$;
in particular, after reducing to $\k$, any uniformiser of $\O_F$ acts as zero on the graded pieces.
The presence of the Pappas-Rapoport condition ensures, by \cite[1.3.1]{BH},
that the Hodge polygon of $(\D(H_\k^\vee),\phi_{H_\k^\vee})$, as defined in \cite[1.1.7]{BH},
is bounded between its ``$O_F$-Newton polygon'' and
the ``Pappas-Rapoport polygon'' associated to the tuple~$(d_{\upsilon,i})_{\upsilon,i}$.
Before comparing these invariants (introduced respectively in \cite[1.1.9]{BH} and \cite[\S 1.2]{BH})
with those of $(H,\iota)$ in use here, note that the polygons considered in loc.\ cit.\ are convex;
one can pass from one point of view to the other by simply reversing the order of the slopes
(and the inequalities).
Then, on the one hand, Remark~\ref{rmk-Nwt} and \cite[1.1.12]{BH} allow to
recognise the $O_F$-Newton polygon of $(\D(H_\k^\vee),\phi_{H_\k^\vee})$ as
(the convex version of) $\Newt(H,\iota)$.
On the other hand, a simple computation using the formula~\eqref{omg-vee} shows that
the Pappas-Rapoport polygon associated to $(d_{\upsilon,i})_{\upsilon,i}$ is
(the convex version of) $\Hdg(H,\iota)$.
In particular, if $(H,\iota)$ is Hodge-Newton reducible,
then so is $(\D(H_\k^\vee),\phi_{H_\k^\vee})$ in the sense of \cite[1.3.2]{BH},
with respect to the same break point of the Newton polygon (up to reversing the order of the slopes).
Indeed, the double bound assured above implies that the Hodge polygon of
$(\D(H_\k^\vee),\phi_{H_\k^\vee})$ passes through this break point too.
In conclusion,
the decomposition of $(\D(H_\k^\vee),\phi_{H_\k^\vee})$ following from \cite[1.3.2]{BH} corresponds,
by Dieudonné theory, to a decomposition of $(H_\k,\iota)$.
It is then immediate from the configuration of the Newton polygons
(and the uniqueness of the slope filtration),
that one piece of this decomposition is $(H_{1,\k},\iota_1)$.

\clearpage

\vfill

\noindent
\textsc{Andrea Marrama}\\
CMLS, École Polytechnique\\
91128 Palaiseau Cedex, France\\
E-mail: \href{mailto:andrea.marrama@polytechnique.edu}{andrea.marrama@polytechnique.edu}

\end{document}